\documentclass[a4paper]{amsart}

\usepackage[utf8]{inputenc}
\usepackage{amsmath, amsthm, amssymb, amsfonts}
\usepackage[T1]{fontenc}
\usepackage{tikz-cd} 
\usepackage{enumitem}
\setcounter{secnumdepth}{4}
\usepackage[italic]{mathastext}
\usepackage[english]{babel} 
\usepackage{url}
\usepackage{amsmath}

\usepackage{tcolorbox}
\usepackage{xcolor}
\usepackage{hyperref}
\theoremstyle{plain}
\newtheorem{thm}{Theorem}[section]

\newtheorem{lemma}[thm]{Lemma}
\newtheorem{prop}[thm]{Proposition}
\newtheorem{corollary}[thm]{Corollary}
\theoremstyle{definition}
\newtheorem{rmk}[thm]{Remark}
\newtheorem{defi}[thm]{Definition}
\newtheorem{example}[thm]{Example}
\newtheorem{question}[thm]{Question}

\usepackage{upgreek}
\usepackage{mathtools}
\usepackage[all]{xy}
\usepackage{filecontents}
\DeclareMathOperator{\br}{Br}

\DeclareMathOperator{\ev}{ev}

\DeclareMathOperator{\spec}{Spec}
\DeclareMathOperator{\rsw}{rsw}

\DeclareMathOperator{\Pic}{Pic}
\DeclareMathOperator{\Ev}{Ev}
\DeclareMathOperator{\im}{\mathfrak{m}}
\DeclareMathOperator{\ip}{\mathfrak{p}}

\DeclareMathOperator{\h}{H}

\DeclareMathOperator{\fil}{fil}
\newcommand{\numberset}{\mathbb}
\newcommand{\p}{\numberset{P}}
\newcommand{\F}{\numberset{F}}

\newcommand{\Z}{\numberset{Z}}
\newcommand{\Q}{\numberset{Q}}

\newcommand{\Os}{\mathcal{O}}
\newcommand{\Ad}{\mathbf{A}}

\newcommand{\A}{\mathcal{A}}

\newcommand{\et}{\text{ét}}
\usepackage{dsfont}
\usepackage{commath}
\newcommand{\thistheoremname}{}
\newtheorem*{genericthm*}{\thistheoremname}
\newenvironment{namedthm*}[1]
  {\renewcommand{\thistheoremname}{#1}%
   \begin{genericthm*}}
  {\end{genericthm*}}

\makeatletter

\definecolor{mame}{rgb}{0.0, 0.5, 0.5}
\definecolor{bibi}{rgb}{0.79, 0.08, 0.48}

\title{The role of primes of good reduction in the Brauer--Manin obstruction}
\author{Margherita Pagano}
\date{July 2025}
\begin{document}
\maketitle
\markboth{\small Margherita Pagano}{\small The role of primes of good reduction in the Brauer--Manin obstruction}
\begin{abstract}
    \noindent We discuss the role of primes of good reduction in the existence of the Brauer--Manin obstruction to weak approximation for varieties defined over number fields. Following Bright and Newton, we give conditions on the ramification index of primes involved in the Brauer--Manin obstruction. To support the results, many examples of transcendental nature on K$3$ surfaces are given. 
\end{abstract}
\section{Introduction}
Let $k$ be a number field and $\Ad_k$ be the ring of adèles of $k$, i.e.\ the restricted product of $k_\nu$ for all places $\nu$ of $k$, taken with respect to the rings of integers $\Os_\nu\subseteq k_\nu$. Let $V$ be a smooth, proper, geometrically irreducible variety over $k$. In this paper, we are interested in the image of $V(k)$ in the set of the adèlic points $V(\Ad_k)$. We say that $V$ satisfies \emph{weak approximation} if the image of $V(k)$ in $V(\Ad_k)$ is dense. In $1970$ Manin \cite{Manin} introduced the use of the Brauer group to study the image of $V(k)$ in $V(\Ad_k)$, and in $1977$ Colliot-Thélène and Sansuc used the same idea to study weak approximation, see \cite[Proposition~5]{CTSansuc}. More precisely, they showed that there exists a pairing
\[
    \langle-,-\rangle\colon \br(V)\times V(\Ad_k)\rightarrow \Q/\Z
\]
such that the closure of the rational points of $V$ lies in the right kernel of the pairing, denoted by $V(\Ad_k)^{\br}$. If $V(\Ad_k)^{\br}$ is not equal to the whole $V(\Ad_k)$ we say that there is a \emph{Brauer--Manin obstruction to weak approximation} on $V$.

For every non-archimedean place $\ip$ and for every element $\A\in \br(V)$ we denote by $\ev_\A$ the map $\langle \A, -\rangle\mid_{V(k_{\ip})}\colon V(k_{\ip})\rightarrow \Q/\Z$.
\begin{defi}
    We say that a prime $\ip$ of $k$ \emph{plays a role} in the Brauer--Manin obstruction to weak approximation on $V$ if there exists an element $\A\in \br(V)$ such that the corresponding evaluation map $\ev_\A\colon V(k_{\ip})\rightarrow \br(k_{\ip})$ is non-constant.  
\end{defi}
In this paper, we are addressing the following question: 
\begin{question}
    Assume $\Pic(\bar{V})$ to be torsion-free and finitely generated. Which primes can play a role in the Brauer--Manin obstruction to weak approximation on $V$?
\end{question}
This question is inspired by a question asked by Swinnerton-Dyer \cite[Question 1]{colliotskogoodred}. He asked whether only primes of bad reduction and archimedean places can play a role in the Brauer--Manin obstruction to weak approximation. 
 Bright and Newton \cite{BrightNewton} prove the following theorem, which leads to a negative answer to Swinnerton-Dyer's question.
\begin{namedthm*}{Theorem C, \cite{BrightNewton}}\label{thm: Theorem C}
    Assume that $\h^0(V,\Omega^2_V)\ne 0$. Let $\ip$ be a prime at which $V$ has good ordinary reduction and of residue characteristic $p$. Then there exists a finite field extension $k'/k$, a prime $\ip'$ of $k'$ lying over $\ip$ and an element $\A\in \br(V_{k'})\{p\}$ such that the corresponding evaluation map is non-constant on $V(k'_{\ip'})$. 
\end{namedthm*}
Hence, under the assumptions of Theorem C, up to a base change to a finite field extension $k'/k$ we can always find a prime $\ip'$ of good reduction that plays a role in the Brauer--Manin obstruction to weak approximation on $V_{k'}$.

\vspace{2mm}

The main focus of this paper is on the field extension $k'/k$ appearing in Bright and Newton's result, with a particular emphasis on K$3$ surfaces. We analyse how the reduction type and the absolute ramification index are involved in the possibility for a prime of good reduction to play a role in the Brauer--Manin obstruction to weak approximation. 
\begin{thm}\label{intro_thm: ordinary good reduction}
    Let $\ip$ be a prime of good ordinary reduction for $V$ of residue characteristic $p$. Assume that the special fibre at $\ip$, $\mathcal{V}(\ip)$ has no non-trivial global $1$-forms, $\h^1(\overline{\mathcal{V}(\ip)},\Z/p\Z)=0$ and $(p-1)\nmid e_{\ip}$. Then the prime $\ip$ does not play a role in the Brauer--Manin obstruction to weak approximation on $V$.
\end{thm}
As pointed out in \cite[Remark~11.5]{BrightNewton}, if $V$ is a K$3$ surface, then for every prime of good reduction $\ip$ the special fibre is still a K$3$ surface and hence has no non-trivial global $1$-forms and satisfies $\h^1(\overline{\mathcal{V}(\ip)},\Z/p\Z)=0$. It follows from Theorem \ref{intro_thm: ordinary good reduction} that for K$3$ surfaces over number fields, in order for a prime of good ordinary reduction to be involved in the Brauer--Manin obstruction to weak approximation it is necessary that $(p-1)\mid e_{\ip}$. In the case of K$3$ surfaces, we analyse also what happens for primes of good non-ordinary reduction.
\begin{thm}\label{intro_thm: good non-ordinary reduction}
    Let $V$ be a K$3$ surface and $\ip$ be a prime of good non-ordinary reduction for $V$ with $e_{\ip} \leq (p-1)$. Then the prime $\ip$ does not play a role in the Brauer--Manin obstruction to weak approximation on $V$.
\end{thm}

Furthermore, in Section~\ref{subsubsection: On the existence of a Brauer Manin obstruction over a field extension} we improve the result of Theorem~C under some extra assumption on the crystalline cohomology of the special fibre at a prime $\ip$ of good ordinary reduction. In particular, we show that it is always possible to find (over a finite field extension $k'/k$) an element whose order is \emph{exactly} $p$ and whose evaluation map is non-constant, cf. Theorem \ref{thm: obstruction after field extension}.

We provide several examples in support of the conditions on the ramification index found in Theorem \ref{intro_thm: ordinary good reduction} and \ref{intro_thm: good non-ordinary reduction}. In particular, we exhibit K$3$ surfaces $V$ over number fields such that:
\begin{enumerate}[label=(\alph*)]
    \item $V$ has good ordinary reduction at a prime $\ip$ with ramification index $e_{\ip}=p-1$ and there is an element $\A\in \br(V)[p]$ whose evaluation map is non-constant on $V(k_{\ip})$, see Theorem~\ref{thm: family of examples} with $\alpha=1$;
    \item $V$ has good ordinary reduction at a prime $\ip$ with $e_{\ip}=p-1$ and $\ip$ does not play a role in the Brauer--Manin obstruction to weak approximation, see Example~\ref{example: Kummer}; 
    \item $V$ has good non-ordinary reduction at a prime $\ip$ with $e_{\ip}=p$ and there is an element $\A\in \br(V)[p]$ whose evaluation map is non-constant on $V(k_{\ip})$, see Theorem~\ref{thm: family of examples} with $\alpha=\sqrt{2}$.
\end{enumerate}
More precisely: from (a) we get that the condition $(p-1)\nmid e_{\ip}$ in Theorem~\ref{intro_thm: ordinary good reduction} is necessary; from (b) we get that the converse of Theorem~\ref{intro_thm: ordinary good reduction} does not hold in general; from (c) we get that the inequality in Theorem~\ref{intro_thm: good non-ordinary reduction} cannot be refined.

Finally, we point out that in all the examples of K$3$ surfaces in which a prime of good reduction plays a role in the Brauer--Manin obstruction to weak approximation, the corresponding element in the Brauer group is of \emph{transcendental} nature, i.e. it does not belong to the algebraic Brauer group, which is defined as the kernel of the natural map from $\br(V)$ to $\br(\bar{V})$, where $\bar{V}$ is the base change of $V$ to an algebraic closure of $k$ (cf. Corollary~\ref{cor: refined Swan conductor and algebraic elements}). 
The first example of a transcendental element in the Brauer group of a K$3$ surface defined over a number field was given by Wittenberg in \cite{Wittenberg}. Other examples of transcendental elements that obstruct weak approximation can be found in \cite{HasVarWAorder2}, \cite{IeronymouOrd2},\cite{PreuOrd3}, \cite{NewtonOrd3}, \cite{BergVarOrd3}, \cite{IeronymouSkoro} and \cite{ErrataIeronymouSkoro}. In all these articles, the obstruction to weak approximation comes from the fact that the transcendental algebra has non-constant evaluation at either the place at infinity or at a prime of bad reduction. The first example of a K$3$ surface in which a prime of good reduction plays a role in the Brauer--Manin obstruction to weak approximation can be found in \cite{Pagano}. 
\subsection{Notation} If $G$ is an abelian group and $n$ a positive integer, then $G[n]$ denotes the kernel of multiplication by $n$ on $G$. If $p$ is a prime, then $G\{p\}$ denotes the $p$-power torsion subgroup of $G$. For any smooth scheme $X$ over a field $k$, we denote by $\Z/n\Z(r)$ the element of the bounded derived category $D^b(X_{\et})$ defined by
\[
\Z/n\Z(r):=
\begin{cases}
    \mu_n^{\otimes r}, \text{ if } k \text{ has characteristic }0;\\
    \mu_m^{\otimes r}\oplus W_s \Omega^r_{X,\log}[-r], \text{ if }k\text{ has characteristic }p>0\text{ and }n=mp^s, \, p\nmid m.
\end{cases}
\]
For the definition of $W_s \Omega^r_{X,\log}$ see Illusie \cite{Illusie}. 

Finally, if $R$ is a $k$-algebra and either $n$ is invertible in $k$ or $R$ is smooth over $k$, we define
\begin{equation}\label{eq: definition of h^q(K)}
    \h^q_n(R):=\h^q(R_\et, \Z/n\Z(q-1)) \, \text{ and } \, \h^q(R):=\varinjlim_n \h^q_n(R). 
\end{equation}
We denote by $k$ any number field and by $\Os_k$ its ring of integers. Let $\Omega_k$ be the set of places of $k$, for any finite subset $S\subset \Omega_k$ we define $\Os_{k,S}$ as the intersection in $k$ of the local rings $\Os_{\nu}$ for $\nu \in S$. We denote by $V$ any proper, smooth and geometrically integral variety over a number field $k$. Let $\mathcal{V}$ be an $\Os_{k,S}$-scheme of finite type such that there exists an isomorphism between $\mathcal{V}\times_{\Os_{k,S}} \spec(k)$ and $V$ (i.e. $\mathcal{V}$ is an $\Os_{k,S}$ model for the scheme $V$); for any finite place $\ip$ not in $S$, we denote by $\mathcal{V}(\ip)$ the special fibre at $\ip$, i.e. the base change of $\mathcal{V}$ to $\spec(k(\ip))$, by $\mathcal{V}_{\ip}$ the base change of $\mathcal{V}$ to $\Os_{\ip}$ and by $V_{\ip}$ the base change of $V$ to $k_{\ip}$.

Let $p$ be a prime number. We denote by $L$ any $p$-adic field (i.e. a finite field extension of the field of $p$-adic numbers $\Q_p)$, with ring of integers $\Os_L$ and residue field $\ell$, which is a finite field of positive characteristic. We denote by $X$ any smooth and geometrically integral variety over $L$ and by $\mathcal{X}$ any smooth model over $\Os_L$ with geometrically integral fibre, i.e. $\mathcal{X}$ is such that there exists an isomorphism between $\mathcal{X}\times_{\Os_L} \spec(L)$ and $X$. Moreover, we denote by $Y$ the special fibre of $\mathcal{X}$, which is the base change of $\mathcal{X}$ to $\ell$ and is a smooth, proper and geometrically integral variety over the finite field $\ell$, since smoothness and properness are stable under base change.

\subsection{Outline}
Section~\ref{Section: the refined Swan conductor} is devoted to introducing the notion and some properties of the refined Swan conductor. In Section~\ref{section:ordinarygoodred} we prove Theorem~\ref{intro_thm: ordinary good reduction} and we prove a stronger version of \cite[Theorem~C]{BrightNewton} for K$3$ surfaces. Section~\ref{section: ordinary case examples} is devoted to the explanation of several examples of K$3$ surfaces in which primes of good ordinary reduction are (or are not) involved in the Brauer--Manin obstruction to weak approximation. In Section~\ref{Section: non-ordinary good reduction} we give a proof of Theorem~\ref{intro_thm: good non-ordinary reduction}. Finally,  in Section~\ref{Section: family of examples} we exhibit a family of examples where, depending on a parameter $\alpha$, both ordinary and non-ordinary reduction occur and we analyse what happens to the Brauer--Manin obstruction to weak approximation depending on the parameter $\alpha$.  

\subsection{Acknowledgements}
I would like to thank Martin Bright for all the useful conversations, suggestions and having read carefully the first version of this paper. I am thankful to Evis Ieronymou for pointing out to me Example~\ref{subsection: non ordinary case Example}. This work was written during my PhD at Leiden University and is part of my PhD thesis. I would like to thank Evis Ieronymou, Rachel Newton and Alexei Skorobogatov because their revision of my thesis greatly improved this article. Finally, I am grateful to the anonymous referee for all the extremely helpful comments.
\section{The refined Swan conductor}\label{Section: the refined Swan conductor}
Let $V$ be a smooth proper and geometrically integral variety over a number field $k$. In order to study the role of a prime $\mathfrak{p}$ in the Brauer--Manin obstruction, we need to understand, for a given element $\A\in \br(V)$, the behavior of the corresponding evaluation map 
\[
    \ev_\A\colon V(k_{\mathfrak{p}})\rightarrow \Q/\Z.
\]
If we denote by $\mathrm{res}\colon \br(V)\rightarrow\br(V_{\mathfrak{p}})$ the natural restriction map, then for every point $P\in V(k_{\mathfrak{p}})$ we have a corresponding point $P_{\mathfrak{p}}\in V_{\mathfrak{p}}(k_{\mathfrak{p}})$ and
\[
    \ev_{\mathrm{res}(\A)}(P_{\mathfrak{p}})=\ev_\A(P).
\]
Hence, if the evaluation map attached to $\A$ is non-constant on $V(k_{\ip})$, then also $\mathrm{res}(\A)\in \br(V_{\mathfrak{p}})$ has non-constant evaluation map on $V_{\mathfrak{p}}(k_{\mathfrak{p}})$.

\vspace{2mm}

\subsection{P-adic setting}\label{Section: General Setting} Let $p$ be a prime number and $L$ a finite field extension of $\Q_p$, with ring of integers $\Os_L$, uniformiser $\pi$ and residue field $\ell$. Let $X$ be a smooth and geometrically irreducible $L$-variety having good reduction (i.e. there exists a smooth proper $\Os_L$-scheme $\mathcal{X}$ whose generic fiber is isomorphic to $X$). We assume furthermore the special fiber $Y:=\mathcal{X}\times_{\spec(\Os_L)}\spec(\ell)$ to be geometrically irreducible, 
\begin{equation}\label{eq:diagram1}
    \begin{tikzcd}
    X\arrow[d]\arrow[r,"j"] &\mathcal{X}\arrow[d] &Y\arrow[d] \arrow[l,"i"'] \\
    \spec(L)\arrow[r] &\spec(\Os_L) &\spec(\ell).\arrow[l]
    \end{tikzcd}
\end{equation}

In \cite{BrightNewton} Bright and Newton define the following filtration, called the \emph{Evaluation filtration}, on the Brauer group of $X$:
 \begin{alignat*}{2}
     &\Ev_n \br X:=\{ \mathcal{B} \in \br(X) \mid \forall L'/L \text{ finite, } \forall P\in \mathcal{X}(\Os_{L'})\\
     &\qquad \qquad \qquad \qquad \quad \ev_{\mathcal{B}}  \text{ is constant on }B(P,e_{L'/L}(n+1))\}, \qquad (n\geq 0)\\ 
     &\Ev_{-1} \br X :=\{ \mathcal{B} \in \br(X) \mid \forall L'/L \text{ finite, } \ev_{\mathcal{B}} \text{ is constant on }\mathcal{X}(\Os_{L'})\}\\
     &\Ev_{-2} \br X :=\{ \mathcal{B} \in  \br(X) \mid \forall  L'/L  \text{ finite, } \ev_{\mathcal{B}}  \text{ is zero on }\mathcal{X}(\Os_{L'})\}
 \end{alignat*}
For every positive integer $m$ we denote by $\Ev_n \br(X)[m]$ the restriction of $\Ev_n \br(X)$ to $\br(X)[m]$, i.e. $\Ev_n \br(X)[m]:=\Ev_n \br(X) \cap \br(X)[m]$. 

\vspace{2mm}

If $(m,p)=1$ Colliot-Th\'{e}l\`{e}ne--Skorobogatov \cite{colliotskogoodred} and Bright \cite{BrightBadReduction} have proven that $\Ev_0\br(X)[m]=\br(X)[m]$. Moreover, there exists a map $\partial_m$, called  \emph{residue map}, that fits in the following sequence
\begin{equation*}
    0\rightarrow \br(\mathcal{X})[m]\rightarrow \br(X)[m]\xrightarrow{\partial_m} H^1(Y,\Z/m\Z), \quad \text{\cite[Chapter~3,(3.17)]{BGgroupTheleneSkoro}}.
\end{equation*}
The residue map is such that 
\begin{align*}
    &\Ev_{-1}\br(X)[m]=\{\A \in \br(X)[m]\mid \partial_m(\A)\in H^1(\ell,\Z/m\Z)\} \\
    &\Ev_{-2} \br(X)[m]=\{ \A \in \br(X)[m] \mid \partial_m (\A)=0\},
\end{align*} 
see \cite{CTSaito} for a proof of it.

In order to give a description of the Evaluation filtration also on the $p$-power torsion part of $\br(X)$, Bright and Newton introduce the filtration $\{\fil_n \br(X)\}_{n\geq 0}$ on $\br(X)$, defined through the notion (introduced by Kato \cite{Kato}) of Swan conductor on discrete Henselian valuation fields, see Section \ref{subsection: definition of the refined swan conductor}. The interaction between the two filtrations is described in the theorem that follows.

\begin{thm}\label{thm: evaluation fil and refined Swan conductor}
There exists a residue map 
\[
    \partial\colon \fil_0\br(X)\rightarrow \varinjlim_n H^1(Y,\Z/n\Z)=:H^1(Y,\Q/\Z)
\]
such that
    \begin{enumerate}[label=(\alph*)]
    \item $\fil_0\br(X)$ coincides with $\Ev_0\br(X)$; 
    \item $\Ev_{-1} \br(X)=\{\A \in \br(X)\mid \partial(\A)\in H^1(\ell,\Q/\Z)\}$;
    \item $\Ev_{-2} \br(X)=\{ \A \in \br(X) \mid \partial (\A)=0\}.$
    \end{enumerate} 
    For every $n \geq 1$ there is a map
    \[
    \rsw_{n,\pi}\colon\fil_n \br(X)\rightarrow \h^0(Y,\Omega^2_Y)\oplus \h^0(Y,\Omega^1_Y)
    \]
    called refined Swan conductor, such that
    \[
    \Ev_n \br(X)=\left\{
        \A\in \fil_{n+1}\br(X)\mid \rsw_{n+1,\pi}(\A)\in \h^0(Y,\Omega^2_Y)\oplus 0\right\}.
    \]
\end{thm}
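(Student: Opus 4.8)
The statement packages together Kato's theory of the Swan conductor with the concrete behaviour of evaluation maps, so I would organise the proof around a single local object: the henselian discretely valued field $K$ obtained as the fraction field of the henselisation of $\Os_{\mathcal{X},\eta}$ at the generic point $\eta$ of $Y$. Since $\mathcal{X}$ is regular and $Y=V(\pi)$ is a prime divisor, this local ring is a discrete valuation ring with uniformiser $\pi$ and residue field the function field $F:=\ell(Y)$. Restriction gives a map $\br(X)\to\br(K)$, and the whole theorem is a comparison between Kato's Swan-conductor filtration on $\br(K)$, pulled back to define $\fil_n\br(X)$, and the evaluation filtration $\Ev_\bullet$. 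At the outset I would split every group into its prime-to-$p$ and its $p$-primary part. On the prime-to-$p$ part everything is already supplied by the purity sequence \eqref{eqPurityTheoremCoprimeToP} together with the results of Colliot-Th\'el\`ene--Skorobogatov and Bright quoted just before the statement: the Swan conductor is automatically $0$, so $\fil_0=\Ev_0=\br(X)[m]$, the residue map is the tame residue $\partial_m$, and (b),(c) are the displayed descriptions of $\Ev_{-1}[m]$ and $\Ev_{-2}[m]$. Hence the real content lies in the $p$-primary part, and I would carry out the remaining steps for $\A\in\br(X)\{p\}$.

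For the $p$-primary part I would invoke Kato's construction for the henselian field $K$: a filtration $\fil_n\br(K)$ by Swan conductor, a level-$0$ residue to $\h^1(F,\Q/\Z)$, and for each $n\ge1$ a refined Swan conductor which, after the choice of $\pi$, takes the shape $\rsw_{n,\pi}\colon\fil_n\br(K)\to\Omega^2_F\oplus\Omega^1_F$. Pulling back along $\br(X)\to\br(K)$ defines $\fil_n\br(X)$ and candidate maps $\partial$ and $\rsw_{n,\pi}$ valued in forms over the \emph{function field} $F$, which is imperfect. The point that these in fact land in the global sections $\h^0(Y,\Omega^i_Y)$ (resp.\ in $\h^1(Y,\Q/\Z)$) is a purity/integrality statement: I would compute the refined Swan conductor at every codimension-$1$ point of $Y$, using the henselian field attached to each such divisor, and use good reduction to show the resulting rational form acquires no poles along any divisor. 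Since $Y$ is smooth a rational $i$-form regular in codimension $1$ is regular everywhere, and properness then makes it a global section; the same argument with $W_s\Omega^\bullet_{Y,\log}$ in place of $\Omega^\bullet_Y$ globalises the residue to $\h^1(Y,\Q/\Z)$ and glues it to the prime-to-$p$ tame residue.

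The heart of the matter, and the step I expect to be the main obstacle, is to translate the numerical condition ``Swan conductor $\le n$'' into the geometric statement ``$\ev_\A$ is constant on the ball $B(P,e_{L'/L}(n+1))$''. For this I would fix $P\in\mathcal{X}(\Os_{L'})$, regard a neighbouring point $Q$ in the ball as a deformation of $P$, and expand $\ev_\A(Q)-\ev_\A(P)=\inv_{L'}(Q^*\A-P^*\A)$. Kato's reciprocity and the defining property of the refined Swan conductor express this difference, to leading order, as a residue pairing of $\rsw_{n+1,\pi}(\A)$ against the differential data of the deformation (the $d\pi$-direction and the horizontal directions along $Y$). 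Making this precise is the genuinely computational part: one must make the local class field theory of $K$, whose residue field is imperfect, interact correctly with the explicit symbol calculus for $\Omega^2_F\oplus\Omega^1_F$, while keeping careful track of how the radius of the ball is matched to the conductor. This is where essentially all the difficulty concentrates and where one leans hardest on Bright--Newton's analysis.

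Granting the comparison, the assertions about $\partial$ follow by specialising: $\fil_0=\Ev_0$ because Swan conductor $0$ is exactly tameness, which is what constancy on $B(P,e_{L'/L})$ detects, giving (a); elements whose residue lies in $\h^1(\ell,\Q/\Z)$, i.e.\ is pulled back from the closed point, are precisely those with constant evaluation, giving (b); and vanishing residue corresponds to evaluation identically $0$, giving (c). For $n\ge1$ the refined description drops out of the same leading-order formula. Writing $\rsw_{n+1,\pi}(\A)$ as a sum of an $\Omega^2_Y$-term and a term involving $d\log\pi$ with coefficient in $\Omega^1_Y$, the $\Omega^1_Y$-component records the variation of $\ev_\A$ transverse to $Y$ while the $\Omega^2_Y$-component records the variation along the special fibre. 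An element of $\fil_{n+1}\br(X)$ always has evaluation constant on the smaller ball $B(P,e_{L'/L}(n+2))$, and constancy on the larger ball $B(P,e_{L'/L}(n+1))$ holds precisely when the transverse term is absent. This is exactly the condition $\rsw_{n+1,\pi}(\A)\in\h^0(Y,\Omega^2_Y)\oplus 0$, which is the asserted characterisation of $\Ev_n\br(X)$.
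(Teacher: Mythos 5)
The paper's own ``proof'' of this theorem is a single sentence: it is declared to be a reformulation of Bright--Newton's Theorem~A, with no argument given. So the real comparison is between your sketch and the proof in Bright--Newton's paper. On that score your outline reproduces the correct architecture: the henselian discretely valued field $K^h$ attached to the generic point of $Y$, the definition of $\fil_n\br(X)$ by pullback of Kato's Swan-conductor filtration along $\br(X)\to\br(K^h)$, the prime-to-$p$ part disposed of by the purity sequence together with Colliot-Th\'el\`ene--Skorobogatov and Bright, and the integrality statement that $\partial$ and $\rsw_{n,\pi}$ land in $\h^1(Y,\Q/\Z)$ and $\h^0(Y,\Omega^2_Y)\oplus\h^0(Y,\Omega^1_Y)$ rather than merely in the corresponding groups over the imperfect function field $F$ (this is Bright--Newton's Theorem~B, proved essentially as you indicate, following Kato).

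However, judged as a proof your proposal has a genuine gap, and you name it yourself: the equivalence between ``$\A\in\fil_{n+1}\br(X)$ with vanishing $\Omega^1$-component of $\rsw_{n+1,\pi}(\A)$'' and ``$\ev_\A$ constant on every ball $B(P,e_{L'/L}(n+1))$'' \emph{is} the theorem, and at exactly that point you write that one ``leans hardest on Bright--Newton's analysis'' instead of giving an argument. So what you have is a correct plan whose every verified step is peripheral, while the step left open is the whole content; relative to the paper (which likewise only cites) this is consistent, but it is not an independent proof. One further caution about your closing heuristic: the claim that the $\Omega^1_Y$-component records transverse variation while the $\Omega^2_Y$-component records variation along the special fibre is not accurate for $n\geq 1$. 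If $\rsw_{n+1,\pi}(\A)=(\alpha,0)$ with $\alpha\neq 0$, then $\A\in\Ev_n\br(X)$ but $\A\notin\fil_n\br(X)$, hence $\A\notin\Ev_{n-1}\br(X)$; thus $\ev_\A$ is already non-constant on some ball $B(P,e_{L'/L}\,n)$, whose points all have the \emph{same} reduction on $Y$. So $\alpha$ also obstructs constancy within residue disks, merely one congruence level up; the two components are not separated by the horizontal/vertical geometry you describe but by the radius at which they become visible.
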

\begin{proof}
    This is a reformulation of \cite[Theorem~A]{BrightNewton}.
\end{proof}
It is clear that to understand the evaluation filtration on the Brauer group we need to understand the residue map $\partial$ and the refined Swan conductor maps $\rsw_{n,\pi}$.

\subsection{Definition of the refined Swan conductor}\label{subsection: definition of the refined swan conductor}

We denote by $K$ a Henselian discrete valuation field of characteristic zero with ring of integers $\Os_K$ and residue field $F$ of characteristic $p$. Let $\pi$ be a uniformiser in $\Os_K$ and $\mathfrak{m}$ be the maximal ideal of $\Os_K$.
Let $A$ be a ring over $\Os_K$, $R:=A/\im A$ and $i,j$ the inclusions of the special and generic fibers into $\spec(A)$:
\begin{center}
    \begin{tikzcd}
     \spec(A\otimes_{\Os_K} K)\arrow[r,"j"] &\spec(A) &\spec(R).\arrow[l,"i"']
    \end{tikzcd}
\end{center}
We define
\[
    V_n^q(A):=\h^q\left((R)_\et, i^*\mathrm{R}j_* \Z/n\Z(q-1)\right)
\]
and $V^q(A):=\varinjlim_n V_n^q(A)$. 

\begin{rmk}
    The natural map in $D^b(A_\et)$
\[
\mathrm{R}j_* \Z/n\Z(q-1)\rightarrow i_* i^* \mathrm{R} j_* \Z/n\Z(q-1)
\]
induces a natural map $\h^q_n(A\otimes_{\Os_K} K)\rightarrow V_n^q(A)$ for all $n,q$. Gabber \cite{Gabber} proved that this map is an isomorphism if $(A,\mathfrak{m} A)$ is a Henselian pair. In this case, using the Kummer map $(A\otimes_{\Os_K} K)^\times \rightarrow \h^1(A\otimes_{\Os_K} K, \Z/n\Z(1))$ and the cup product, we define a product
  \begin{align*}
        V_n^q(A)\times &((A\otimes_{\Os_K} K)^\times)^{\oplus r} \rightarrow V_n^{q+r}(A)\\
        (\chi,&a_1,\dots,a_r) \mapsto \{\chi, a_1,\dots,a_r\}.
\end{align*}
For a general $A$, the isomorphism \cite[Remark~1.2.13]{PhDthesis}
\begin{equation}\label{eq: isomorphism with Henselianisation}
    V_n^q(A)\simeq V_n^q(A^{(h)})
\end{equation} 
allows to extend the product above. From now on we identify $V^q_n(A)$ and $V^q_n(A^{(h)})$. In particular, note that if $A=\Os_K[T]$, then all the polynomials of the form $1+\pi^n p(T)$ are invertible in $A^{(h)}$, see \cite[0EM7]{Stacks} for an overview on the henselisation of (not necessarily local) rings.
\end{rmk}
 
\begin{defi}\label{def: Swan Conductor}
The increasing filtration $\{\fil_n \h^q(K)\}_{n\geq 0}$ on $\h^q(K)$ is defined by 
$$\chi \in \fil_n \h^q(K) \Leftrightarrow \{\chi, 1+\pi^{n+1} T\}=0 \text{ in }V^{q+1}(\Os_K[T]). $$
We say that $\chi \in \h^q(K)$ has \emph{Swan conductor} $n$, if $\chi\in \fil_n\h^q(K)$ and $\chi \notin \fil_{n-1} \h^q(K)$. By \cite[Lemma 2.2]{Kato} $\h^q(K)=\cup_n \fil_n\h^q(K)$, hence the Swan conductor is defined for every element $\chi\in \h^q(K)$.
\end{defi}

In \cite{Kato} and \cite{BrightNewton} certain maps $\lambda_\pi\colon \h^{q}_n(R)\oplus \h^{q-1}_n (R)\rightarrow V^q_n(A)$ are defined under additional assumptions on the $\Os_K$-algebra $A$. We provide an overview of these maps, which, with a slight abuse of notation, we will always refer to as $\lambda_\pi$.
\begin{rmk}\label{rmk: different definition of lambda_pi}
    \leavevmode

    \begin{itemize}
        \item In \cite[Section~1.4]{Kato} Kato defines for every $n$ an injective map 
        \[
        \lambda_\pi\colon \h^q_n(F)\oplus\h^{q-1}_n(F) \rightarrow \h^q_n(K).
        \]
        This collection of maps induces an injective map
        \[
            \lambda_\pi\colon \h^q(F)\oplus \h^{q-1}(F)\rightarrow \h^q(K).
        \]
        \item In \cite[Section~1.9]{Kato} Kato extends the definition of $\lambda_\pi$ to any smooth $\Os_K$-algebra $A$. In particular, he defines a map 
        \[
            \lambda_\pi\colon \h^q_p(R)\oplus \h^{q-1}_p(R)\rightarrow V^q_p(A).
        \]
        
        \item Finally, in \cite[Section~2.2]{BrightNewton} Bright and Newton generalise the previous map by defining 
        \[
        \lambda_\pi\colon  \h^q_{p^r}(R)\oplus \h^{q-1}_{p^r}(R)\rightarrow V^q_{p^r}(A)
        \]
        for any $r\geq 1$.
    \end{itemize}
    It is proven in \cite[Proposition~6.1]{Kato} that the image of 
\[
\lambda_\pi \colon \h^q_n(F)\oplus \h^{q-1}_n(F)\rightarrow \h^q_n(K)
\]
coincides with $\fil_0\h^q_n(K)$.
    In \cite[Section~1.3]{Kato} Kato shows that for every $r\geq 1$ there is a surjection $\delta_r$ from $W_r \Omega^q_{R}$ to $\h^q_{p^r}(R)$. Following \cite{Kato} and \cite{BrightNewton} we sometimes use $\lambda_\pi$ also to denote the composition
    \begin{equation}\label{eq: delta and cup product}
        W_r \Omega^q_{R}\oplus W_r \Omega^{q-1}_{R}\xrightarrow{\delta_r} V^q_{p^r}(R)\oplus V^{q-1}_{p^r}(R)\xrightarrow{\lambda_\pi} V^q_{p^r}(A).
    \end{equation}
\end{rmk}
The following theorem allows Kato to define the refined Swan conductor of $\chi\in \fil_n\h^q(K)$.
\begin{thm}\label{Thm: existence of refined Swan conductor}
    Let $\chi\in \fil_n \h^q(K)$, with $n\geq 1$; then there exists a unique pair $(\alpha,\beta)$ in $\Omega^q_F\oplus \Omega^{q-1}_F$ such that 
    \begin{equation}
        \{\chi,1+\pi^n T\}=\lambda_\pi(T\alpha,T\beta) \quad \text{in }V^{q+1}_p(\Os_K[T]).
    \end{equation}
\end{thm}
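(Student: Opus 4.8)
The statement is Kato's construction of the refined Swan conductor, so my plan is to follow his symbol calculus, separating the two genuinely distinct issues: the uniqueness of $(\alpha,\beta)$, which is formal, and its existence, which is the real content. I would dispose of uniqueness first. If $\lambda_\pi(T\alpha,T\beta)=\lambda_\pi(T\alpha',T\beta')$ in $V^{q+1}_p(\Os_K[T])$, then by the injectivity of $\lambda_\pi$ (Remark, part (a), in the form extended to the smooth $\Os_K$-algebra $\Os_K[T]$) the classes $\delta_1\big(T(\alpha-\alpha')\big)$ and $\delta_1\big(T(\beta-\beta')\big)$ vanish. One then checks that a differential form of pure $T$-degree one cannot lie in $\ker\delta_1$ unless its coefficient is zero, using Kato's description of $\ker\delta_1$ via the Cartier operator, whose generating relations are not homogeneous of degree one in $T$. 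Hence $\alpha=\alpha'$ and $\beta=\beta'$.

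For existence, set $f(T):=\{\chi,1+\pi^n T\}\in V^{q+1}_p(\Os_K[T])$; the goal is to show $f(T)$ lies in the image of $\lambda_\pi$ and equals $\lambda_\pi$ of a $T$-linear pair. The first key point is that $f$ is \emph{additive} in $T$. Introducing a second variable and using multiplicativity of the symbol in its last entry,
\[
\{\chi,1+\pi^n T_1\}+\{\chi,1+\pi^n T_2\}=\{\chi,(1+\pi^n T_1)(1+\pi^n T_2)\}=\{\chi,1+\pi^n(T_1+T_2+\pi^n T_1T_2)\}.
\]
Since $n\geq 1$ we have $2n\geq n+1$, so $1+\pi^n(T_1+T_2+\pi^n T_1T_2)$ differs from $1+\pi^n(T_1+T_2)$ by a factor lying in $1+\mathfrak{m}^{n+1}$; because $\chi\in\fil_n\h^q(K)$ annihilates every such factor by the defining relation of the filtration (which specialises from the universal $1+\pi^{n+1}T$), we obtain
\[
\{\chi,1+\pi^n T_1\}+\{\chi,1+\pi^n T_2\}=\{\chi,1+\pi^n(T_1+T_2)\},
\]
that is, $f(T_1+T_2)=f(T_1)+f(T_2)$.

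The second key point is that $f(T)$ lands in $\fil_0 V^{q+1}_p(\Os_K[T])$, which by the analogue for the smooth algebra $\Os_K[T]$ of Kato's computation (Remark, part (a)) coincides with the image of $\lambda_\pi$. This is an instance of the general behaviour of the symbol under the Swan filtration, namely that pairing $\fil_n\h^q$ with the principal units $U^{(n)}=1+\mathfrak{m}^n$ lands in $\fil_0\h^{q+1}$, applied to $1+\pi^n T\in U^{(n)}$. Granting this, I would write $f(T)=\lambda_\pi(\gamma,\delta)$ for classes $\gamma,\delta$ over $F[T]$; the additivity of $f$ together with the injectivity of $\lambda_\pi$ forces $\gamma$ and $\delta$ to be additive in $T$, hence represented by additive ($p$-polynomial) differential forms, and Kato's structure theory then identifies these additive classes with the purely $T$-linear ones. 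Extracting the degree-one coefficients yields the required $\alpha\in\Omega^q_F$ and $\beta\in\Omega^{q-1}_F$ with $f(T)=\lambda_\pi(T\alpha,T\beta)$.

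I expect the main obstacle to be this last step rather than the additivity computation: proving cleanly that $f(T)\in\fil_0$ and making precise the passage from ``additive in $T$'' to ``linear in $T$'' requires the explicit structure of $V^{q+1}_p(\Os_K[T])$ and of the image of $\lambda_\pi$ over the two-dimensional base $\Os_K[T]$, together with the exact sequences relating $\h^\bullet_p(F[T])$, the forms $\Omega^\bullet_{F[T]}$ and the Cartier operator. This is exactly where one must exclude spurious higher terms $T^{p^i}\alpha_i$ for $i\geq 1$ and confirm that only the coefficient of $T$ survives. Everything else — multiplicativity of symbols, functoriality of $V^{q+1}_p$ under specialisation of $T$, and the injectivity of $\lambda_\pi$ — is formal input drawn from the preceding sections.
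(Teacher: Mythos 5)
The first thing to note is that the paper does not prove this theorem at all: its ``proof'' is the single line ``See \cite[Section~5]{Kato}'', so the only meaningful benchmark is Kato's own argument, whose technical core is the structure theory (due to Bloch--Kato and Kato) of $V^{q+1}_p(\Os_K[T])$ --- a filtration on these groups with graded pieces computed in terms of differential forms of the special fibre. Your outline does mirror the skeleton of that argument in spirit: the additivity of $f(T)=\{\chi,1+\pi^nT\}$ in $T$, the reduction to the image of $\lambda_\pi$, the Cartier-operator mechanism $\lambda_\pi(T^p\alpha,T^p\beta)=\lambda_\pi(TC(\alpha),TC(\beta))$ (cf.\ \cite[Lemma~2.9(2)]{BrightNewton}, used in Lemma~\ref{lemma: refined np with np<e'}) for converting additive classes into $T$-linear ones, and a degree/Cartier argument for uniqueness, which can indeed be made to work.

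The genuine gap sits at exactly the load-bearing step, and you flag it yourself (``Granting this\dots''). The claim that $f(T)$ lies in $\fil_0 V^{q+1}_p(\Os_K[T])=\mathrm{im}(\lambda_\pi)$ is justified only by appeal to ``the general behaviour of the symbol under the Swan filtration''. No such statement is available as an input: the only result of this shape in the paper, Remark~\ref{rmk: different definition of lambda_pi}(a) (i.e.\ \cite[Proposition~6.1]{Kato}), concerns the field $K$; a filtration $\fil_0$ on $V^{q+1}_p(\Os_K[T])$ is never defined; and identifying its bottom piece with $\mathrm{im}(\lambda_\pi)$ over the base $\Os_K[T]$ is precisely the content of the structure theory that Kato develops before his Section~5. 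Worse, Theorem~\ref{Thm: existence of refined Swan conductor} \emph{is} the precise form of the ``behaviour of $\fil_n\h^q(K)$ under pairing with principal units'', so invoking that behaviour as an input is circular at its core. The same defect affects the injectivity of $\lambda_\pi$ over $\Os_K[T]$ --- and over $\Os_K[T_1,T_2]$, which you need in order to turn additivity of the class $f$ into additivity of a representing pair of forms: the paper asserts injectivity of $\lambda_\pi$ only in the field case, and for smooth $\Os_K$-algebras it is again a consequence, not a premise, of the structure theory. Finally, a smaller but real slip: in the additivity computation the correcting factor is $1+\pi^{2n}T_1T_2\,(1+\pi^n(T_1+T_2))^{-1}$, and $1+\pi^n(T_1+T_2)$ is not a unit of $\Os_K[T_1,T_2]$; one must pass to a localization (where the defining relation of $\fil_n$ still kills the correcting symbol, by functoriality in the $\Os_K$-algebra), and descending the resulting identity back to $V^{q+1}_p(\Os_K[T_1,T_2])$ requires, once more, an injectivity statement that has not been established. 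In short: the scaffolding is right, but every wall you lean on is part of what has to be built.
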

\begin{proof}
    See \cite[Section ~5]{Kato}.
\end{proof}
The uniqueness of the pair $(\alpha,\beta)$ in the previous theorem implies, for every $n\geq 1$, the existence of a homomorphism 
\[
    \rsw_{n,\pi}\colon\fil_n \h^q(K)\rightarrow \Omega^q_F\oplus \Omega^{q-1}_F
\]
whose kernel is $\fil_{n-1}\h^q(K)$. The pair $(\alpha,\beta)$ is called the \emph{refined Swan conductor} of $\chi \in \fil_n \h^q(K)$.

We end this section with the definition of the residue map $\partial$ from $\fil_0\h^q(K)$ to $\h^{q-1}(F)$ (cf. \cite[Section~2.5]{BrightNewton}, \cite[Section~7.5]{Kato}).
\begin{defi}\label{defi: residue map on fields K,F}
    The \emph{residue map} 
    \[
    \partial\colon \fil_0 \h^q(K)\rightarrow \h^{q-1}(F)
    \]
    is defined as the projection on the second component of the inverse of the isomorphism $\lambda_\pi$ from $\h^q(F)\oplus \h^{q-1}(F)$ to $\fil_0 \h^q(K)$. 
\end{defi}

\subsection{The image of the refined Swan conductor}\label{subsection: properties of the image of the refined Swan conductor}
To further describe the refined Swan conductor maps, we need to recall some properties of differential forms over fields of positive characteristic.

Let $\ell$ be a perfect field of characteristic $p>0$ and $R$ be an $\ell$-algebra. Let
\[
    Z^q_R:=\ker(d\colon\, \Omega^q_R\rightarrow \, \Omega^{q+1}_R) \quad \text{ and } \quad B^q_R:=\mathrm{im}(d\colon\, \Omega^{q-1}_R\rightarrow \, \Omega^{q}_R).
\]
\begin{lemma}[Inverse Cartier operator]\label{lemma:inverse cartier operator}
    Assume $R$ to be regular; then there exists a unique homomorphism of groups
    \[
        C^{-1}_R\colon \Omega^1_R \rightarrow \, \Omega^1_R/B^1_R
    \]
    satisfying
    \begin{itemize}
        \item $C_R^{-1}(da)=a^{p-1}da \mod B^1_R$ for all $a\in R$; 
        \item $C^{-1}_R(\lambda \omega)=\lambda^p C^{-1}_R(\omega)$ for all $\lambda\in R$;
        \item $d\circ C^{-1}_R=0$.
    \end{itemize} 
    Moreover, $C^{-1}$ induces an isomorphism from $\Omega^1_R$ to $Z^1_R/B^1_R$.
\end{lemma}
\begin{proof}
    See \cite[Theorem 1.3.4]{BrionKumar}.
\end{proof}
\begin{rmk}\label{remark: on the F^p structure on B^q and Z^q}
    The subgroup $B^1_R$ has a natural structure of an $R$-module, which is given by $\alpha \cdot d\beta=\alpha^p d\beta=d(\alpha^p \beta)$. If we denote by $^p\Omega^1_R$ the $R$-module structure on $\Omega^1_R$ given by $\alpha \cdot \omega=\alpha^p \omega$, then the condition $C_R^{-1}(\lambda \omega)=\lambda^p C_F^{-1}(\omega)$ is equivalent to asking that $C_R^{-1}\colon\Omega^1_R\rightarrow \,^p\Omega^1_R/^pB^1_R$ is a morphism of $R$-modules. 
\end{rmk}
We can extend the definition of $C^{-1}_R$ to higher differential forms by setting 
$$C^{-1}_R(\omega_1\wedge \dots \wedge \omega_q):=C^{-1}_R(\omega_1)\wedge\dots \wedge C^{-1}_R(\omega_q).$$
\begin{thm}\label{thm: higher dimension Cartier isomorphism}
    Let $R$ be regular; then the morphism
    $$C^{-1}_R\colon\Omega^q_R\rightarrow \, Z^q_R/B^q_R$$ 
    is an isomorphism for all $q\geq 0$. We denote by $C_R$ its inverse, which is called the {Cartier operator}.
\end{thm}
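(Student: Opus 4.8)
The plan is to reduce the statement to an explicit computation of the cohomology $Z^q_F/B^q_F$ of the de Rham complex by means of a $p$-basis of $F$. First I would observe that, since $F$ is finitely generated over the perfect field $\ell$, it is separably generated over $\ell$, and so admits a separating transcendence basis $t_1,\dots,t_d$ with $d$ the transcendence degree of $F$ over $\ell$. Such a family is a \emph{$p$-basis}: the differentials $dt_1,\dots,dt_d$ form an $F$-basis of $\Omega^1_F$. Here I use that $\ell$ is perfect, so that $\Omega^1_\ell=0$ and hence the absolute and the relative differentials agree, $\Omega^1_F=\Omega^1_{F/\ell}$. Consequently $\Omega^\bullet_F=\bigwedge^\bullet_F\big(\bigoplus_i F\,dt_i\big)$ is the exterior algebra on the $dt_i$, and every $q$-form is written uniquely as $\sum_{|I|=q}f_I\,dt_I$ with $dt_I:=dt_{i_1}\wedge\cdots\wedge dt_{i_q}$ for $I=\{i_1<\cdots<i_q\}$.

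Next I would record the formal properties of $C^{-1}_F$. By Lemma~\ref{lemma:inverse cartier operator} the map on $1$-forms is additive, satisfies $C^{-1}_F(\lambda\omega)=\lambda^p C^{-1}_F(\omega)$, and obeys $d\circ C^{-1}_F=0$; extending it to $\Omega^q_F$ by the wedge formula produces a map landing in $Z^q_F/B^q_F$, since $Z^\bullet_F$ is a subalgebra of $\Omega^\bullet_F$ and $B^\bullet_F$ an ideal (because $d$ is a derivation). Well-definedness of this extension follows from the additivity and $p$-homogeneity of $C^{-1}_F$ on $\Omega^1_F$, and the resulting map is Frobenius-semilinear, $C^{-1}_F(f\,\omega)=f^p\,C^{-1}_F(\omega)$. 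On the basis elements one computes directly that $C^{-1}_F(dt_I)=\big[(\prod_{i\in I}t_i^{p-1})\,dt_I\big]$.

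The heart of the argument is the computation of $Z^q_F/B^q_F$. Setting $s_i:=t_i^p\in F^p$ and $R_i:=F^p[T_i]/(T_i^p-s_i)\cong F^p[t_i]\subseteq F$, the $p$-basis property yields an isomorphism of $F^p$-algebras $F\cong\bigotimes_{F^p}R_i$; since $d$ annihilates $F^p$, the absolute de Rham complex of $F$ coincides with its de Rham complex relative to $F^p$, which by multiplicativity of Kähler differentials is the tensor product over $F^p$ of the one-variable complexes $K_i=[\,R_i\xrightarrow{\ d\ }R_i\,dt_i\,]$. A direct one-variable computation gives $H^0(K_i)=F^p\cdot 1$ and $H^1(K_i)=F^p\cdot[t_i^{p-1}\,dt_i]$, so the Künneth formula over the field $F^p$ yields
\[
    Z^q_F/B^q_F=\bigoplus_{|I|=q}F^p\cdot\big[(\textstyle\prod_{i\in I}t_i^{p-1})\,dt_I\big].
\]
Comparing with the second paragraph, $C^{-1}_F$ carries the $F$-basis $\{dt_I\}$ of $\Omega^q_F$ onto this family of generators, semilinearly over Frobenius; as $f\mapsto f^p$ is a bijection $F\to F^p$, this shows $C^{-1}_F$ is bijective and proves the theorem (the case $q=0$ being the Frobenius $F\xrightarrow{\sim}F^p=Z^0_F/B^0_F$).

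I expect the main obstacle to be the cohomology computation of the third paragraph: justifying the identification of the absolute de Rham complex with the one relative to $F^p$, setting up the tensor-product (Koszul) decomposition, and carefully tracking the Frobenius-twisted $F$-module structure against the genuine $F^p$-module structure, so that the semilinear map $C^{-1}_F$ is correctly matched with an $F^p$-basis. The existence of a $p$-basis and the one-variable case are standard, but the bookkeeping in the Künneth step is where care is needed.
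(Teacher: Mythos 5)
The paper offers no argument of its own here: its ``proof'' is the single citation to Gille--Szamuely, Theorem~9.4.3, so there is no internal proof to compare against. Your proposal is a correct, self-contained rendition of the classical proof of Cartier's theorem, and it is essentially the argument underlying the cited reference: reduce to a $p$-basis of $F$ and compute the de Rham cohomology explicitly. Packaging the computation as a K\"unneth decomposition $\Omega^\bullet_{F/F^p}\cong K_1\otimes_{F^p}\cdots\otimes_{F^p}K_d$ of one-variable complexes is a clean way of doing the bookkeeping; the identification of absolute with relative-to-$F^p$ differentials (since $d$ kills $p$-th powers), the one-variable computation, the well-definedness of the wedge extension (Frobenius-semilinearity is exactly $F$-linearity into the Frobenius-twisted $F^p$-module $Z^q_F/B^q_F$), and the final semilinear matching of the $F$-basis $\{dt_I\}$ with the $F^p$-basis $\{[\prod_{i\in I}t_i^{p-1}\,dt_I]\}$ are all sound. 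One point deserves attention: you conflate two notions of $p$-basis. What you state (the $dt_i$ form an $F$-basis of $\Omega^1_F$) is differential independence; what your K\"unneth step actually uses ($F\cong\bigotimes_{F^p}R_i$, i.e.\ that the monomials $t_1^{a_1}\cdots t_d^{a_d}$ with $0\le a_i\le p-1$ form an $F^p$-basis of $F$) is $p$-independence in the ring-theoretic sense. For field extensions these are equivalent, but that equivalence is itself a theorem (e.g.\ Matsumura, Theorem~26.5); alternatively, for a separating transcendence basis the monomial statement has a short direct proof: with $k_0=\ell(t_1,\dots,t_d)$, the extension $F/F^pk_0$ is both separable and purely inseparable, hence $F=F^p(t_1,\dots,t_d)$, and $[F:F^p]=p^d$ follows by computing $[F:k_0^p]$ two ways using that Frobenius gives $[F^p:k_0^p]=[F:k_0]$. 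Either include this argument or cite it; with that single gloss filled in, your proof is complete.
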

\begin{proof}
    See \cite[Theorem 1.3.4]{BrionKumar}.
\end{proof}

The following corollary gives a way to characterise exact differential forms in terms of the Cartier operator.
\begin{corollary}\label{cor: Exact Forms in terms of Cartier Op}
  Let $R$ be regular. A $q$-form $\omega\in \Omega^q_R$ is exact if and only if $d(\omega)=0$ and $C_R(\omega)=0$.  
\end{corollary}

The last object we need to define is the subgroup of {logarithmic} $q$-differential forms on $R$, which will play a crucial role in this paper. 
\begin{defi}\label{def: logarithmic forms}
    The \emph{logarithmic} $q$-differential forms on $R$, denoted by $\Omega^q_{R,\log}$, are defined as the kernel of the map
    $$C^{-1}_R-\mathrm{id}\colon \Omega^q_R\rightarrow \Omega^q_R/B^q_R.$$
    
\end{defi}
We have the following explicit description of logarithmic forms when $R$ is a field. 
\begin{thm}\label{thm: Log forms on field F}
    Let $F$ be a field, finitely generated over a perfect field $\ell$. The logarithmic differential $q$-forms $\Omega^q_{F,\log}$ is the subgroup of $\Omega^q_F$ generated by elements of the form 
    $$\frac{dy_1}{y_1}\wedge \dots \wedge\frac{dy_q}{y_q}, \text{ with }y_i\in F^\times.$$
\end{thm}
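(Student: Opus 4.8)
The plan is to prove the two inclusions separately, showing that $\Omega^q_{F,\log}$ (the kernel of $C_F^{-1} - \mathrm{id}$) both contains and is contained in the subgroup generated by the $\frac{dy_1}{y_1}\wedge\dots\wedge\frac{dy_q}{y_q}$. First I would check that such wedges of logarithmic forms actually lie in the kernel of $C_F^{-1} - \mathrm{id}$. The key computation is for $q=1$: using Lemma~\ref{lemma:inverse cartier operator}, $C_F^{-1}\left(\frac{dy}{y}\right) = C_F^{-1}\left(y^{-1} dy\right) = y^{-p} C_F^{-1}(dy) = y^{-p} y^{p-1} dy = y^{-1} dy = \frac{dy}{y} \bmod B^1_F$, so $\frac{dy}{y}$ is fixed by $C_F^{-1}$. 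For higher $q$, I would use the multiplicativity of the extended operator, $C_F^{-1}(\omega_1 \wedge \dots \wedge \omega_q) = C_F^{-1}(\omega_1)\wedge \dots \wedge C_F^{-1}(\omega_q)$, to conclude that a wedge of $\frac{dy_i}{y_i}$ is fixed modulo $B^q_F$. Since the kernel is a subgroup, this gives one inclusion.

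The reverse inclusion is the substantive direction, and I expect it to be the main obstacle. One would need to show every element of $\ker(C_F^{-1}-\mathrm{id})$ is a sum of logarithmic wedges. The natural approach is to invoke the theory of the logarithmic de Rham--Witt sheaf: the image of the dlog map $\mathrm{dlog}\colon (F^\times)^{\otimes q} \to \Omega^q_F$ is by construction generated by the stated wedges, and the content of the theorem is that this image is exactly the fixed points of the inverse Cartier operator. Over a perfect base this is a classical result going back to Kato and Illusie (it is essentially the Bloch--Kato--Gabber theorem identifying $\Omega^q_{F,\log}$ with the Milnor $K$-theory mod $p$ via the symbol map). Rather than reprove this from scratch, the cleanest route is to reduce to $q=1$ via a standard filtration/dévissage argument and then appeal to the Artin--Schreier description: an element $\omega \in \Omega^1_F$ satisfies $C_F^{-1}(\omega) = \omega$ precisely when it is $\mathrm{dlog}$ of a unit, which follows from the exact sequence relating $\Omega^1_{F,\log}$ to $H^1$ of the Artin--Schreier--Witt complex.

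Concretely, I would structure the hard inclusion as follows. Choose a $p$-basis of $F$ over $\ell$; this gives explicit generators for $\Omega^q_F$ as an $F$-vector space, and the action of $C_F^{-1}$ on monomials in these generators can be written down. Writing a general closed form in terms of this basis and imposing the fixed-point condition $C_F^{-1}(\omega) = \omega$ forces the coefficients to satisfy Artin--Schreier-type equations $a^p - a = \text{(something)}$, whose solvability is what produces the logarithmic generators $\frac{dy}{y}$. The main technical difficulty is bookkeeping: controlling the interaction between the $p$-linear structure (Remark~\ref{remark: on the F^p structure on B^q and Z^q}) and the wedge product, and ensuring the reduction modulo $B^q_F$ does not lose information. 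Since this identification is standard and stated with a citation elsewhere in the literature, in the write-up I would most likely just cite the relevant result (e.g.\ Illusie \cite{Illusie} or the Bloch--Kato--Gabber theorem) for the reverse inclusion and give the short direct verification only for the easy inclusion above.
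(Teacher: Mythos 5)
Your proposal is correct and ends up taking essentially the same route as the paper: the paper's entire proof is a citation of surjectivity in the Bloch--Gabber--Kato theorem (Gille--Szamuely, Theorem~9.5.2), which is exactly what you invoke for the hard inclusion. Your explicit verification that $C_F^{-1}\bigl(\frac{dy}{y}\bigr)=\frac{dy}{y} \bmod B^1_F$ and its extension to wedges is a fine (and correct) supplement, though it is already implicit in that citation, since the differential symbol there is defined as a map into the fixed points of the inverse Cartier operator.
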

\begin{proof}
    It follows from the surjectivity in the Bloch--Gabber--Kato Theorem \cite[Theorem $9.5.2$]{GilleSzamuely}.
\end{proof}
If $R$ is smooth over a field of positive characteristic, we can identify $\h^q_p(R)=\h^1(R_\et, \Omega^{q-1}_{R,\log})$ with the cokernel of 
\[
{C}_{R}^{-1}-1\colon \Omega^{q-1}_R\rightarrow \Omega^{q-1}_R/B^{q-2}_R,
\]
see \cite[Section~1.3]{Kato}.
We denote by $\delta_1$ both the map from $\Omega^{q-1}_R/B^{q-2}_R$ to $\h^q_p(R)$ and its composition with the natural map $\h^q_p(R)\rightarrow \h^q(R)$. 

\vspace{2mm}

Let $e:=\mathrm{ord}_K(p)$ the absolute ramification index of $K$ and $e':=ep(p-1)^{-1}$. 
\begin{lemma}\label{lemma: rsw d(alpha)=0 and d(beta)=(alpha)}
    Let $\chi$ be an element in $\fil_n \h^q(K)$ with 
    \[
    \rsw_{n,\pi}(\chi)=(\alpha,\beta)\in \Omega^2_F\oplus \Omega^1_F.
    \]
    Then $d\alpha=0$ and $d\beta=(-1)^qn \alpha$.
\end{lemma}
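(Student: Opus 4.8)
The plan is to read both relations off the defining identity of the refined Swan conductor in Theorem~\ref{Thm: existence of refined Swan conductor}, specialised to $q=2$: for $\A\in\fil_n\br(K)$ with $\rsw_{n,\pi}(\A)=(\alpha,\beta)$ one has
\[
\{\A,1+\pi^n T\}=\lambda_\pi(T\alpha,T\beta)\qquad\text{in }V^3_p(\Os_K[T]).
\]
The idea is to apply the de Rham--Witt differential $d$ to both sides and compare the resulting expressions componentwise. Two structural inputs drive the computation. First, $\lambda_\pi$ is built (Remark~\ref{rmk: different definition of lambda_pi}) from the surjections $\delta_r\colon W_r\Omega^\bullet_{F[T]}\to\h^\bullet_{p^r}(F[T])$ followed by Kato's symbol map, and it intertwines the differential, so that $d\,\lambda_\pi(T\alpha,T\beta)=\lambda_\pi\bigl(d(T\alpha),d(T\beta)\bigr)=\lambda_\pi\bigl(dT\wedge\alpha+T\,d\alpha,\;dT\wedge\beta+T\,d\beta\bigr)$. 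Second, in the construction of $\lambda_\pi$ the \emph{second} argument is the one paired with the class of the uniformiser, i.e.\ it carries a factor $d\log\pi$; this is the book-keeping device that separates the two sought relations.

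First I would extract $d\alpha=0$. On the left-hand side the Leibniz rule concentrates $d$ on the factor $1+\pi^n T$ (using that $\A$ is $d$-closed), producing the logarithmic derivative $\tfrac{d(1+\pi^nT)}{1+\pi^nT}$. Collecting the contributions that do \emph{not} involve $d\log\pi$, these are all proportional to $dT$; comparing them, via the injectivity of $\lambda_\pi$ (Remark~\ref{rmk: different definition of lambda_pi}(a)), with the first slot $dT\wedge\alpha+T\,d\alpha$ of the right-hand side forces the $dT$-free summand $T\,d\alpha$ to vanish, hence $d\alpha=0$. Equivalently, this records that the first component of the refined Swan conductor always lands in the closed forms $Z^2_F$, consistent with the Cartier description in Theorem~\ref{thm: higher dimension Cartier isomorphism} and Proposition~\ref{prop: Exact Forms in terms of Cartier Op}.

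The relation $d\beta=n\alpha$ is the substantive one, and the integer $n$ enters precisely through the differentiation of $\pi^n$: writing $\tfrac{d(1+\pi^nT)}{1+\pi^nT}=\tfrac{\pi^n\,dT+n\pi^{n-1}T\,d\pi}{1+\pi^nT}$, the term proportional to $n\,d\pi$ lands, via the $d\log\pi$-slot of $\lambda_\pi$, in the same component as $\beta$, while $\alpha$ reappears in that component through its own pairing with $\pi$. Matching the $d\log\pi$-components on the two sides and again invoking the injectivity of $\lambda_\pi$ yields $d\beta=n\alpha$; the coefficient $n$ is exactly the exponent produced by $d(\pi^n)=n\pi^{n-1}d\pi$.

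The main obstacle is making the two displayed ``intertwining'' and ``Leibniz'' steps rigorous inside the groups $V^3_p(\Os_K[T])$: one must justify that $d$ passes through $\delta_r$ and through Kato's symbol construction with the correct correction terms, and that $d\{\A,1+\pi^nT\}$ is genuinely governed by $d\log(1+\pi^nT)$. This is the type of de Rham--Witt computation carried out in \cite[Section~5]{Kato}; the cleanest route is to reduce, by functoriality in $T$ and in $\Os_K$, to Kato's explicit formulae for $\lambda_\pi$ and to the relations linking the Frobenius, Verschiebung and $d$ on $W_r\Omega^\bullet$, from which the coefficients $0$ and $n$ emerge on the weight-$n$ graded piece.
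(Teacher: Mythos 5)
There is a genuine gap, and it is exactly the one you flag at the end: every step of your argument runs through operations that do not exist on the objects in question. The groups $V^3_p(\Os_K[T])$ are \'etale cohomology groups $\h^3\left((\Os_K[T]/\im)_\et, i^*\mathrm{R}j_*\Z/p\Z(2)\right)$; there is no de Rham--Witt differential acting on them, the phrase ``$\A$ is $d$-closed'' has no meaning for a class $\A\in\br(K)$, and so neither side of your identity $d\{\A,1+\pi^nT\}=\lambda_\pi\bigl(d(T\alpha),d(T\beta)\bigr)$ is defined. Nor can the comparison step be rescued: differential forms only reach $V^q_p$ through $\delta_r$ and $\lambda_\pi$ (Remark~\ref{rmk: different definition of lambda_pi}), and these maps are far from injective --- compare Proposition~\ref{propGradedPieces}, where the graded pieces of the Bloch--Kato filtration are quotients of forms by \emph{all} closed forms --- so a class in $V^3_p(\Os_K[T])$ has no well-defined ``$dT$-free summand'' or ``$d\log\pi$-component'' that you could extract and match. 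The injectivity you invoke from Remark~\ref{rmk: different definition of lambda_pi}(a) is a statement about the residue field, i.e.\ about $\lambda_\pi\colon \h^q(F)\oplus\h^{q-1}(F)\to\h^q(K)$; over $\Os_K[T]$ the only available statement is the uniqueness in Theorem~\ref{Thm: existence of refined Swan conductor}, which applies solely to pairs of the special shape $(T\alpha,T\beta)$ --- and after any putative differentiation the pair $(dT\wedge\alpha+T\,d\alpha,\;dT\wedge\beta+T\,d\beta)$ is no longer of that shape. Deferring these points to ``the type of de Rham--Witt computation carried out in \cite[Section~5]{Kato}'' concedes precisely the content of the lemma: those computations \emph{are} the proof, and your sketch does not reconstruct them.

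For the record, the paper gives no argument at all: it cites \cite[Lemma~2.8]{BrightNewton}. The proof there (going back to Kato) is not a differentiation argument but a symbol manipulation: one multiplies the defining identity $\{\A,1+\pi^nT\}=\lambda_\pi(T\alpha,T\beta)$ by further symbols $\{\,\cdot\,,T\}$ and $\{\,\cdot\,,\pi\}$, feeds in the Steinberg relation $\{1+\pi^nT,-\pi^nT\}=0$ --- this is where the coefficient $n$ enters, through $\{\,\cdot\,,\pi^n\}=n\{\,\cdot\,,\pi\}$, not through $d(\pi^n)=n\pi^{n-1}d\pi$ --- and then uses Kato's product formulas for $\lambda_\pi$ (the ones this paper cites as \cite[(6.3.1)]{Kato}) together with the structure of the kernel of $\lambda_\pi$ over $F[T,T^{-1}]$ to separate the two components. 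Your guiding intuition that the second slot of $\lambda_\pi$ is ``the one paired with $\pi$'' is the right heuristic, but turning it into a proof requires exactly this symbol calculus, not a Leibniz rule applied to cohomology classes.
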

\begin{proof}
    See \cite[Lemma~2.17]{BrightNewton}.
\end{proof}
\begin{rmk}\label{rmk: rsw comp proj_2}
    We get that:
    \begin{enumerate}[label=(\arabic*), ref= \thermk(\arabic*)]
        \item \label{rmk: rsw comp proj_2 1}if $p\mid n$, then $d\alpha=0$ and $d\beta=0$, meaning that $(\alpha,\beta)\in Z^q_F\oplus Z^{q-1}_F$;
        \item \label{rmk: rsw comp proj_2 2}if $p\nmid n$, then $\alpha=\bar{n}^{-1}d\beta$, with $\bar{n}$ residue of $n$ modulo $\pi$. In particular, the composition
        \[
        \fil_n \h^q(K)\xrightarrow{\rsw_{n,\pi}} \Omega^q_F\oplus \Omega^{q-1}_F \xrightarrow{\mathrm{pr}_2} \Omega^{q-1}_F
        \] 
        has also kernel equal to $\fil_{n-1}\h^q(K)$.
    \end{enumerate}
\end{rmk}
In \cite[Lemma~2.19]{BrightNewton} Bright and Newton link the refined Swan conductor of $\chi \in \fil_n\h^q(K)$ to the one of $p \cdot \chi$, whenever $n\geq e'$. More precisely, assume that $\rsw_{n,\pi}(\chi)=(\alpha,\beta)$ and let $\bar{u}$ be the reduction modulo $\pi$ of $p\cdot \pi^{-e}$, then $p\cdot \chi \in \fil_{n-e} \h^q(K)$ and
\begin{equation}\label{eq: rsw n with n>e'}
    \rsw_{n-e,\pi}(p\cdot \chi)=\begin{cases}
        (\bar{u}\alpha,\bar{u}\beta) \text{ if }n>e';\\
        (\bar{u}\alpha+C(\alpha),\bar{u}\beta+C(\beta)) \text{ if }n=e'.
    \end{cases}
\end{equation}
We prove a result analogous to the one proven by Bright and Newton for elements $\chi\in \fil_{np}\h^q(K)$, when $np<e'$.
\begin{lemma}\label{lemma: refined np with np<e'}
    Let $\chi\in \fil_{np} \h^q(K)$, with $np<e'$. Then $p\cdot \chi \in \fil_n\br(K)$ and if $\rsw_{np,\pi}(\A)=(\alpha,\beta)$, then $d\alpha=0$, $d\beta=0$ and 
    \[
        \rsw_{n,\pi}(p\cdot \chi)=(C(\alpha),C(\beta)).
    \]
\end{lemma}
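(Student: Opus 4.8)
The plan is to work in additive notation, so that $\A^{\otimes p} = p\A$, and to use throughout the two defining relations of Definition~\ref{def: Swan Conductor} and Theorem~\ref{Thm: existence of refined Swan conductor}: membership $\A \in \fil_{np}\br(K)$ means $\{\A, 1 + \pi^{np+1}T\} = 0$ in $V^{3}_p(\Os_K[T])$, while $\rsw_{np,\pi}(\A) = (\alpha,\beta)$ means $\{\A, 1 + \pi^{np}T\} = \lambda_\pi(T\alpha, T\beta)$. I would first dispose of the claim $d\alpha = 0$, $d\beta = 0$: since $p \mid np$, this is immediate from Remark~\ref{rmk: rsw comp proj_2}(a) (equivalently from Lemma~\ref{lemma: rsw d(alpha)=0 and d(beta)=(alpha)}, as $\overline{np} = 0$ in $F$). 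Hence $\alpha \in Z^2_F$ and $\beta \in Z^1_F$ are closed, so $C(\alpha)$ and $C(\beta)$ are defined by Theorem~\ref{thm: higher dimension Cartier isomorphism}.

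The computational engine is the expansion $(1 + \pi^m T)^p = 1 + \pi^{mp}T^p + p\,g_m(T)$ with $g_m(T) \in \pi^m T\,\Os_K[T]$, so that $p\,g_m(T) \in \mathfrak m^{\,e+m}\Os_K[T]$. The hypothesis $np < e' = ep(p-1)^{-1}$ unwinds to $n(p-1) < e$, i.e.\ $np < e + n$, hence $np + 1 \le e + n$. By bilinearity of the symbol, $\{p\A, 1 + \pi^m T\} = \{\A, (1 + \pi^m T)^p\}$, and I will use the standard fact (from \cite{Kato}, \cite{BrightNewton}) that $\A \in \fil_{np}$ annihilates any symbol $\{\A, 1 + c\}$ with $c \in \mathfrak m^{np+1}\Os_K[T]$. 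Taking $m = n+1$ proves $\A^{\otimes p} \in \fil_n \br(K)$: both summands $\pi^{(n+1)p}T^p$ and $p\,g_{n+1}(T)$ of $(1+\pi^{n+1}T)^p - 1$ lie in $\mathfrak m^{np+1}\Os_K[T]$ (since $(n+1)p \ge np+1$ and $e + n + 1 \ge np + 1$), so the symbol vanishes. Taking instead $m = n$, the term $p\,g_n(T) \in \mathfrak m^{e+n} \subseteq \mathfrak m^{np+1}$ is again annihilated, leaving $\{p\A, 1 + \pi^n T\} = \{\A, 1 + \pi^{np}T^p\}$.

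Next I would invoke functoriality of the symbol and of $\lambda_\pi$ under the $\Os_K$-algebra endomorphism of $\Os_K[T]$ sending $T \mapsto T^p$: it reduces to Frobenius-on-$T$ on $F[T]$ and carries $T\alpha, T\beta$ (which contain no $dT$) to $T^p\alpha, T^p\beta$. Substituting $T \mapsto T^p$ in the defining relation for $\rsw_{np,\pi}(\A)$ then gives $\{\A, 1 + \pi^{np}T^p\} = \lambda_\pi(T^p\alpha, T^p\beta)$, so that $\{p\A, 1 + \pi^n T\} = \lambda_\pi(T^p\alpha, T^p\beta)$.

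It then remains to identify $\lambda_\pi(T^p\alpha, T^p\beta)$ with $\lambda_\pi(T\,C(\alpha), T\,C(\beta))$, after which the uniqueness in Theorem~\ref{Thm: existence of refined Swan conductor} yields $\rsw_{n,\pi}(\A^{\otimes p}) = (C(\alpha), C(\beta))$. I expect this to be the main obstacle. The underlying algebraic input is the Cartier relation $C(T^p\alpha) = T\,C(\alpha)$ for the closed form $T^p\alpha$ (and likewise for $\beta$), equivalently $C^{-1}(T\,C(\alpha)) \equiv T^p\alpha \pmod{B^2_{F[T]}}$, which follows from $C^{-1}(\lambda\omega) = \lambda^p C^{-1}(\omega)$ together with $C^{-1}\circ C = \mathrm{id}$ modulo exact forms (Lemma~\ref{lemma:inverse cartier operator} and Theorem~\ref{thm: higher dimension Cartier isomorphism}). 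The delicate point is that $\lambda_\pi \circ \delta_1$ must be compatible with this Cartier/Frobenius structure, so that passing from $T^p\alpha$ to its Cartier image converts the $T \mapsto T^p$ twist back into a plain factor of $T$; this is precisely the mechanism that produces the $C(\alpha), C(\beta)$ contribution in the $n = e'$ subcase of \cite[Lemma~2.19]{BrightNewton} (cf.\ \eqref{eq: rsw n with n>e'}). I would extract the required compatibility from Kato's description of $\lambda_\pi$ and $\delta_1$ in \cite{Kato} and from \cite{BrightNewton}, taking particular care to control the kernel of $\delta_1$ (exact and logarithmic forms) so that the identity holds exactly in $V^{3}_p(\Os_K[T])$.
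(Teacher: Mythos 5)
Your proposal follows the same architecture as the paper's proof: closedness of $(\alpha,\beta)$ via Remark~\ref{rmk: rsw comp proj_2}(a), the binomial expansion controlled by the inequality $e-np+n>0$ extracted from $np<e'$, reduction to the identity $\lambda_\pi(T^p\alpha,T^p\beta)=\lambda_\pi(TC(\alpha),TC(\beta))$, and then uniqueness in Theorem~\ref{Thm: existence of refined Swan conductor}. Your $m=n+1$ computation is correct and is exactly the paper's: there the \emph{entire} error $(1+\pi^{n+1}T)^p-1$ lies in $\pi^{np+1}\Os_K[T]$, so it can be written as $\pi^{np+1}b(T)$ with $b(T)\in\Os_K[T]$ and killed by substituting $T\mapsto b(T)$ into the defining relation of $\fil_{np}$. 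Also, the identity you single out as ``the main obstacle'' is precisely \cite[Lemma~2.9(2)]{BrightNewton}, which the paper simply cites; your sketch of the mechanism ($C(T^p\alpha)=TC(\alpha)$ for closed $\alpha$) is the right one, so that part of your plan is sound.

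The genuine gap is in the $m=n$ step, where you assert $\{p\A,1+\pi^nT\}=\{\A,1+\pi^{np}T^p\}$ because $p\,g_n(T)\in\mathfrak{m}^{np+1}\Os_K[T]$ ``is annihilated''. The fact you cite --- that $\{\A,1+c\}=0$ for $c\in\pi^{np+1}\Os_K[T]$ --- cannot be applied to \emph{drop an additive term inside} the second argument: the symbol is multiplicative, not additive, in that slot, so you would need a factorization $(1+\pi^nT)^p=(1+\pi^{np}T^p)\,(1+c)$ with $c\in\pi^{np+1}\Os_K[T]$, and the quotient $(1+\pi^nT)^p/(1+\pi^{np}T^p)$ is not a polynomial (both polynomials have degree $p$ and the same leading coefficient, so a polynomial quotient would force them to be equal). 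Making your route rigorous would require passing to a localization or henselization of $\Os_K[T]$ along the special fibre and proving $V^3_p$ is unchanged --- none of which you do. The paper sidesteps this entirely: it sets $c(T)=\bigl((1+\pi^nT)^p-1\bigr)/\pi^{np}$, checks $c(T)\in\Os_K[T]$ with $\bar{c}(T)=T^p$, and invokes Kato's substitution property \cite[(6.3.1)]{Kato}, namely that the defining relation for $\rsw_{np,\pi}(\A)$ holds with $T$ replaced by \emph{any} element $a\in\Os_K[T]$ and its value $\lambda_\pi(\bar{a}\alpha,\bar{a}\beta)$ depends only on $\bar{a}$; this yields $\{p\A,1+\pi^nT\}=\{\A,1+\pi^{np}c(T)\}=\lambda_\pi(T^p\alpha,T^p\beta)$ in one stroke. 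Note that this is the very substitution principle you yourself use in the following step with $a=T^p$; applying it directly to $c(T)$ rather than to $T^p$ both repairs and eliminates your problematic congruence step.
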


\begin{proof}
    From Remark~\ref{rmk: rsw comp proj_2 1} we know that $(\alpha,\beta)\in Z^q_F\oplus Z^{q-1}_F$, since clearly $p\mid np$. The condition $e'>np$ implies 
    \begin{equation}\label{eq: e-np+n>0}
        e-np+n>0.
    \end{equation}
    Let $u\in \Os_K^\times$ be such that $p=u\cdot \pi^e$. We have
    
    %First, recall that for every $1<k<p$, we have that $p\nmid k!$ and $p\nmid (p-k)!$, hence
    %\begin{equation}\label{eq:binomialcoeff}
    %    \binom{p}{k}=\frac{p\cdot (p-1)!}{k!\cdot (p-k)!}=\pi^e \cdot a_k
    %\end{equation}
    %with $a_k \in \Os_L$. Hence
    \[
        \{ p\cdot \chi, 1+\pi^{n+1} T \}=\{ \chi, (1+\pi^{n+1} T)^p \}=\{\chi, 1+\pi^{np+1}b(T)\}
    \]
    where
    \[
        b(T)=\frac{(1+\pi^{n+1} T)^p-1}{\pi^{np+1}}.
    \]
    %Using equation~\eqref{eq:binomialcoeff} 
    We can rewrite $b(T)$ as 
    \[
        \sum_{k=1}^{p-1} \pi^{e+(n+1)k-(np+1)} a_k T^k+\pi^{p(n+1)-(np+1)} T^p, \quad \text{for some $a_k\in \Z$. }
    \]
    Note that, for every $1\leq k \leq p$, we have that from the inequality~\eqref{eq: e-np+n>0} 
    \[
        e+(n+1)k-np-1=(e-np)+(n+1)k-1\geq 0.
    \]
    Therefore, $b(T)\in \Os_K [T]$. Now, since by assumption $\chi \in \fil_{np} \h^q(K)$, we have that $\{\chi, 1+\pi^{np+1}b(T)\}=0$ for all $b(T)\in \Os_{K}[T]$, thus $p\cdot \chi \in \fil_n\h^q(K)$. In a similar way,
    \[
        \{ p\cdot \chi, 1+\pi^n T \}=\{ \A, (1+\pi^n T)^p \}=\{\chi, 1+\pi^{np}c(T)\}
    \]
    where 
    \[
        c(T)=\frac{(1+\pi^n T)^p-1}{\pi^{np}}.
    \]
    %Again, using equation~\eqref{eq:binomialcoeff} 
    We can rewrite $c(T)$ as 
    \[
       \sum_{k=1}^{p-1} \pi^{e+nk-np} a_k T^k+ T^p, \quad \text{for some $a_k\in \Z$.}
    \]
    In this case, again the inequality~\eqref{eq: e-np+n>0} together with $1\leq k\leq p-1$, implies $e+nk-np>0$. Therefore, $c(T)\in \Os_K[T]$ and its reduction modulo $\pi$ is equal to $T^p$. It follows from \cite[(6.3.1)]{Kato} that 
    \[
        \{\chi, 1+\pi^{np} c(T)\}=\lambda_\pi(\bar{c}(T)\alpha,\bar{c}(T) \beta)=\lambda_\pi(T^p\alpha,T^p \beta)=\lambda_\pi(TC(\alpha),TC(\beta))
    \]
    where the last equality follows from \cite[Lemma~2.18(2)]{BrightNewton}.
\end{proof}
\begin{rmk}\label{rmk: Chapter 1, image of p torsion element via refined Swan conductor}
    For every non-negative integer $d$, we denote by $\fil_n \h^q_d(K)$ the intersection of $\fil_n \h^q(K)$ with $\h^q_d(K)$. In \cite{Kato} Kato proves that for every non-negative integer $d$ prime to $p$,
$\h^q_d(K)=\fil_0\h^q_d(K)$, see \cite[Proposition~6.1]{Kato}. We now prove some useful properties of the refined Swan conductor on $p$-power order elements.\footnote{Parts (2) and (3) are already mentioned by Kato in \cite[Sections 4 and 5]{Kato}.} 
\begin{enumerate}[label=(\arabic*), ref= \thermk(\arabic*)]
    \item \label{rmk: Chapter 1, image of p torsion element via refined Swan conductor 1} The filtration $\fil_n \h^q_{p^m}(K)$ is finite. 

    \vspace{1mm}
    
    Assume first $e'$ to be an integer, i.e. $(p-1)\mid e$. For $m=1$ it is proven in \cite[Proposition~4.1]{Kato} that $\h^q_p(K)=\fil_{e'}\h^q_p(K)$. This is equivalent to saying that for all $n>e'$ and $\chi\in \fil_n\h^q_p(K)$, we have $\rsw_{n,\pi}(\chi)=(0,0)$. Assume that $m>1$, $\chi\in \fil_n\h^q_{p^m}(K)$ with $n> e'+(m-1)e$ and $\rsw_{n,\pi}(\chi)=(\alpha,\beta)$. From equation~\eqref{eq: rsw n with n>e'} we know that $\rsw_{n-e,\pi}(p\cdot \chi)=(\bar{u}\alpha,\bar{u}\beta)$, hence working by induction on $m$ we get that $\rsw_{n,\pi}(\chi)=(0,0)$. This result is essentially already proven in \cite[Proposition~17]{ieronymou2021evaluation}. 
    
    If $e'$ is not an integer, then we take the field extension $K(\zeta)/K$, with $\zeta$ a primitive $p$-root of unity $\zeta$. We denote by $e_K$ and $e_{K(\zeta)}$ the absolute ramification indices of $K$ and $K(\zeta)$, respectively, and by $e'_K$ and $e'_{K(\zeta)}$ the products $e_K \cdot p \cdot (p-1)^{-1}$ and $e_{K(\zeta)} \cdot p \cdot (p-1)^{-1}$, respectively. Let $n> e'_K +(m-1)\cdot e_K$ and $\chi \in \fil_n \h^q_{p^m}(K)$, then 
    \[
        e_{K(\zeta)/K} \cdot n > e'_{K(\zeta)} +(m-1)e_{K(\zeta)}.
    \]
    Since $(p-1)\mid e'_{K(\zeta)}$, we get (from what we said above) that
    \[
    \rsw_{e_{K(\zeta)/K} \cdot n,\pi}(\mathrm{res}(\chi))=(0,0)
    \]
    where $\mathrm{res}$ is the natural map from $\h^q_{p^m}(K)\rightarrow \h^q_{p^m}(K(\zeta))$.
    It follows from \cite[Lemma~2.16]{BrightNewton} that 
    \[
        \rsw_{e_{K(\zeta)/K} \cdot n,\pi}(\mathrm{res}(\chi))=(\bar{a}^{-n}(\alpha+\beta \wedge d\log \bar{a}), \bar{a}^{-n} e_{K(\zeta)/K} \beta)
    \]
    with $\bar{a}$ invertible in the residue field of $K(\zeta)$ (cf. Section~\ref{subsection: Refined Swan conductor and extension of the base field}).
    Hence, since $p\nmid e_{K(\zeta)/K}$ we can conclude that $\rsw_{n,\pi}(\chi)=(0,0)$. 

    \vspace{1mm}
    
    \item \label{rmk: Chapter 1, image of p torsion element via refined Swan conductor 2} Let $\chi \in \fil_{np}\h^q_p(K)$ with $np<e'$ and $\rsw_{np,\pi}(\chi)=(\alpha,\beta)$. We know from Lemma~\ref{lemma: refined np with np<e'} that $(\alpha,\beta)\in Z^q_F\oplus Z^{q-1}_F$; since $\chi$ has order $p$
    \[
        (C(\alpha),C(\beta))=\rsw_{n,\pi}(p\cdot \chi)=(0,0)
    \]
    Equivalently, from Corollary \ref{cor: Exact Forms in terms of Cartier Op} we get that for $np<e'$, the refined Swan conductor on the $p$-torsion takes image in $B^q_F\oplus B^{q-1}_F$.  

    \vspace{1mm}
    
    \item \label{rmk: Chapter 1, image of p torsion element via refined Swan conductor 3} Assume $e'$ to be an integer, $\chi\in \fil_{e'}\h^q_p(K)$ and $\rsw_{e',\pi}(\chi)=(\alpha,\beta)$. From equation~\eqref{eq: rsw n with n>e'} we get that
    \[
        -\bar{u}\alpha=C(\alpha) \; \text{ and } \; -\bar{u}\beta=C(\beta).
    \]
    Let $\zeta$ be a primitive $p$-root of unity and $c=(\zeta-1)^p\pi^{-e'}$, then $c=(c_1)^p$, with $c_1=(\zeta-1)\pi^{-e/(p-1)}$. By the properties of the Cartier operator we have
    \[
        C(\bar{c} \alpha)=\bar{c}_1C(\alpha)=-\bar{u}\bar{c}_1 \alpha.
    \]
    Since %\textcolor{red}{add ref? pag 132 Bloch and Kato mention it} 
    $(\zeta-1)^{p-1}\equiv -p \mod \pi^{e+1}$ \footnote{In fact, $1=((\zeta-1)+1)^p=(\zeta-1)^p +p (\zeta-1)^{p-1}+\binom{p}{2}(\zeta-1)^{p-2}+\dots +p\cdot (\zeta-1)+1$. Since $\zeta\ne 1$ we get $(\zeta-1)^{p-1}+p(\zeta-1)^{p-2}+\binom{p}{2}(\zeta-1)^{p-3}+\dots +p=0$. Now the congruence follows from the fact that $\mathrm{val}((\zeta-1)\cdot p)\geq 1+e$.},
    $(\zeta-1)^{p-1}=-p+\pi^{e+1} v$ for some $v\in \Os_K$. Hence
    \[
        c=\frac{\zeta-1}{\pi^{e/(p-1)}}\cdot \frac{(\zeta-1)^{p-1}}{\pi^e}=c_1\cdot\left(\frac{-p+\pi^{e+1} v}{\pi^e}\right)=c_1\cdot (-u+\pi v).
    \]
    Thus $\bar{c}=-\bar{u}\bar{c}_1$ and
    \[
        \mathrm{mult}_{\bar{c}}\circ \rsw_{e',\pi}(\h^q_p(K))\subseteq \Omega^q_{F,\log} \oplus \Omega^{q-1}_{F,\log}
    \]
    where $\mathrm{mult}_{\bar{c}}$ is the map from $\Omega^q_F$ to $\Omega^q_F$ sending a $q$-form $\omega$ to $\bar{c}\cdot \omega$. 
\end{enumerate}
    
\end{rmk}
\subsection{Swan conductor on Br(X)}\label{subsection: refined Swan conductor on br(X)}

 Let $K^h$ be the field of fractions of the henselisation of the discrete valuation ring $\Os_{\mathcal{X},Y}$. 
 Note that 
\[
\h^2(K^h)=\varinjlim_n \h^2(K^h_\et,\Z/n\Z(1))=\br(K^h).
\]
Bright and Newton \cite{BrightNewton} define the filtration $\{\fil_n\br(X)\}_{n\geq 0}$ on $\br(X)$ as the pull-back via the natural map $\br(X)\rightarrow \br(K^h)$ of the filtration $\{\fil_n\br(K^h)\}_{n\geq0}$ on $\br(K^h)$. Therefore, it is possible to extend the definition of the residue map and the refined Swan conductor to elements in $\br(X)$ simply as the residue map and refined Swan conductor of the image of $\A$ in $\br(K^h)$. A priori these maps take values in $\h^1(F)$ and $\Omega^2_F\oplus \Omega^1_F$ respectively. However, Bright and Newton prove that the residue map of an element in $\fil_0\br(X)$ takes values in $\h^1(Y,\Q/\Z)\subseteq \h^1(F)$ (see \cite[Proposition~3.1(1)]{BrightNewton}) and that the refined Swan conductor on $\fil_n\br(X)$ takes image in $\h^0(Y,\Omega^2_Y)\oplus \h^0(Y,\Omega^1_Y)\subseteq \Omega^2_F\oplus \Omega^1_F$ (a proof following \cite[Theorem~7.1]{Kato} can be found in \cite[Theorem~B]{BrightNewton}). 

The aim of this subsection is to transfer the results we got in Section~\ref{subsection: properties of the image of the refined Swan conductor} on the refined Swan conductor on $\br(K^h)$ to the refined Swan conductor on $\br(X)$.

\vspace{2mm}

It is possible to generalise the definition of inverse Cartier operator on a smooth and proper variety $Y$ defined over a perfect field $\ell$ of positive characteristic. Following Illusie \cite{Illusie}, we denote by $F_Y$ the absolute Frobenius endomorphism of $Y$ and by $Y^{(p)}$ the base change of $Y$ via the absolute Frobenius $\sigma_\ell$ of the base field $\ell$, namely
\begin{center}
    \begin{tikzcd}
        Y \arrow[dr] \arrow[r,"F_{Y/\ell}"] &Y^{(p)}\arrow[d] \arrow[r,"W"] &Y\arrow[d]\\
        &\spec(\ell) \arrow[r,"\sigma_\ell"] &\spec(\ell)
    \end{tikzcd}
\end{center}
where $W\circ F_{Y/\ell}=F_Y$; we call $F_{Y/\ell}$ the relative Frobenius of $Y$ over $\ell$ and denote by $\Omega^{\bullet}_{Y/\ell}$ the De Rham complex of $Y/\ell$.
\begin{rmk}
    In \cite{Illusie}, Illusie works more generally with $S$-schemes, where the base $S$ is a scheme of positive characteristic. Here, we are only interested in varieties over perfect fields and in this case the absolute De Rham complex $\Omega^\bullet_Y$ coincides with the relative De Rham complex $\Omega^\bullet_{Y/\ell}$ (since $\ell$ perfect implies $\Omega^q_{\ell}=0$). 
\end{rmk}

For every $q\geq 0$ we define:
\[
Z^q_{Y}:=\ker(d\colon \Omega^q_{Y}\rightarrow \Omega^{q+1}_{Y}) \quad \text{and} \quad B^q_{Y}:=\mathrm{im}(d\colon\Omega^{q-1}_{Y}\rightarrow \Omega^{q}_{Y}).
\]
The differentials $d\colon\Omega^q_{Y}\rightarrow \Omega^{q+1}_{Y}$ are $\Os_{Y^{(p)}}$-linear, hence the sheaves
\[
    (F_{Y/\ell})_* Z^q_Y \quad \text{and} \quad (F_{Y/\ell})_* B^q_Y 
\]
are $\Os_{Y^{(p)}}$-modules and the sheaf of abelian groups $ \mathcal{H}^q((F_{Y/\ell})_*\Omega^{\bullet}_{Y})$ is also a sheaf of $\Os_{Y^{(p)}}$-modules.
%This is the generalisation for varieties of what happens for differential forms over a field $F$ of positive characteristic. 
\begin{defi}
    For every $q\geq 0$ there is a morphism of $\Os_Y$-modules, called the \emph{inverse Cartier operator}
$$C^{-1}_Y\colon\Omega^q_{Y}\rightarrow W_*    \mathcal{H}^q((F_{Y/\ell})_* \Omega^{\bullet}_{Y}).$$

\end{defi}
\begin{rmk}
    Since $\Omega^q_{Y^{(p)}}=W^* \Omega^q_{Y}$, by adjunction we get a morphism of $\Os_{Y^{(p)}}$-modules
    $$C^{-1}_{Y/\ell}\colon \Omega^q_{Y^{(p)}}\rightarrow \mathcal{H}^q((F_{Y/\ell})_*\Omega^{\bullet}_{Y}).$$
\end{rmk}

\begin{thm}
If $Y$ is a smooth variety over $\ell$, then $C_{Y/\ell}^{-1}$ is an isomorphism of $\Os_{Y^{(p)}}$-modules.
\end{thm}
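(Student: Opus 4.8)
The plan is to exploit that the claim is local on $Y$ and stable under flat (in particular étale) base change, and then to reduce to an explicit one-variable computation. First I would note that being an isomorphism of $\Os_{Y^{(p)}}$-modules can be checked on stalks, and that the formation of $\Omega^q_{Y}$, of the differential $d$, hence of the cohomology sheaves $\mathcal{H}^q(\Omega^{\bullet}_{Y})$, and of the map $C^{-1}_{Y/\ell}$ all commute with localisation and with étale base change: for an étale morphism $g\colon Y'\to Y$ one has $\Omega^1_{Y'}\cong g^*\Omega^1_{Y}$, so $\Omega^{\bullet}_{Y'}$ is the flat pullback of $\Omega^{\bullet}_{Y}$, and flat pullback is exact and therefore commutes with the formation of $\mathcal{H}^q$; by functoriality of the inverse Cartier operator the square relating $C^{-1}_{Y/\ell}$ and $C^{-1}_{Y'/\ell}$ commutes. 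Since $Y$ is smooth over $\ell$, it is covered by opens admitting étale maps to affine space $\mathbb{A}^n_\ell$ (standard smooth coordinate charts), so $C^{-1}_{Y/\ell}$ is an isomorphism as soon as $C^{-1}_{\mathbb{A}^n_\ell/\ell}$ is.

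Next I would reduce the affine-space case to a single variable. The de Rham complex $\Omega^{\bullet}_{\mathbb{A}^n_\ell}$ is the tensor product over $\ell$ of the $n$ copies of $\Omega^{\bullet}_{\mathbb{A}^1_\ell}=(\ell[t_i]\xrightarrow{d}\ell[t_i]\,dt_i)$; since all terms are free $\ell$-modules, the Künneth formula identifies $\mathcal{H}^{\bullet}(\Omega^{\bullet}_{\mathbb{A}^n_\ell})$ with the graded tensor product of the $\mathcal{H}^{\bullet}(\Omega^{\bullet}_{\mathbb{A}^1_\ell})$. Because $C^{-1}$ is multiplicative for the wedge product, exactly as it was extended to higher forms in the field case via $C^{-1}(\omega_1\wedge\dots\wedge\omega_q)=C^{-1}(\omega_1)\wedge\dots\wedge C^{-1}(\omega_q)$, it respects this decomposition, and it suffices to treat $\mathbb{A}^1_\ell$. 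There one computes directly: the identity $d(t^m)=m\,t^{m-1}dt$ shows $\mathcal{H}^0=\ker d=\ell[t^p]$, which is precisely the coordinate ring $\Os_{(\mathbb{A}^1_\ell)^{(p)}}$, and that $t^m\,dt$ is exact exactly when $m\not\equiv p-1 \pmod p$; hence $\mathcal{H}^1$ is free of rank one over $\ell[t^p]$, generated by the class of $t^{p-1}dt=C^{-1}(dt)$. Thus $C^{-1}$ is an isomorphism on $\mathbb{A}^1_\ell$, and the reductions above finish the proof.

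I expect the main obstacle to be the bookkeeping in the reduction step rather than any single deep input: one must check carefully that $C^{-1}$ commutes with étale pullback and is compatible with the Künneth and multiplicative structures, and that these compatibilities suffice to transport the isomorphism property from $\mathbb{A}^1_\ell$ to an arbitrary smooth $Y$. By contrast, the one-variable computation is elementary. I note that this sheaf-theoretic statement is the global counterpart of the field-level Cartier isomorphism already recorded in Theorem~\ref{thm: higher dimension Cartier isomorphism}; one cannot simply deduce it by passing to stalks, since the stalks of $\Os_Y$ are regular local rings rather than the fields to which that theorem applies, which is exactly why the explicit local model is used. The full details are carried out in \cite{Illusie}.
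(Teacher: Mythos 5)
Your proposal is correct and is essentially the proof that the paper's citation points to: the paper gives no inline argument but defers to \cite[Theorem~0.2.1.9]{Illusie}, where the Cartier isomorphism is established by exactly your reduction (étale-local coordinates on a smooth variety, Künneth to pass from $\mathbb{A}^n_\ell$ to $\mathbb{A}^1_\ell$, and the explicit one-variable computation), an argument going back to Katz. The only point you gloss over is that the de Rham differential is not $\Os_Y$-linear, so the ``flat pullback'' of the complex must be understood as pullback of $F_{Y/\ell *}\Omega^{\bullet}_{Y}$ along the étale map $Y'^{(p)}\rightarrow Y^{(p)}$, using that the relative Frobenius square of an étale morphism is Cartesian --- a standard compatibility that your bookkeeping caveat already anticipates, not a genuine gap.
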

\begin{proof}
    See \cite[Theorem $0.2.1.9$]{Illusie}.
\end{proof}
From now on we assume $Y$ to be smooth and proper over $\ell$. In this case, we denote by $C_{Y/\ell}$ the inverse of $C^{-1}_{Y/\ell}$. 
\begin{defi}
    For every non-negative integer $q$, we denote 
    \[
        \Omega^q_{Y,\log}:=\ker(W^*-C_{Y/\ell} \colon Z^q_{Y}\rightarrow \Omega^q_{Y^{(p)}}).
    \]
    The sheaf $\Omega^q_{Y,\log}$ is called the sheaf of \emph{logarithmic} $q$-forms on $Y$.
\end{defi}

\begin{thm}\label{thm: log forms on variety Y}
The sheaf $\Omega^q_{Y,\log}$ is the subsheaf of $\Omega^q_{Y}$ generated $\et$ale-locally by the logarithmic differentials, i.e. the sections of the form 
$$\frac{dy_1}{y_1}\wedge \dots \wedge \frac{dy_q}{y_q} \text{ with }y_i\in \Os_Y^*.$$ 
\end{thm}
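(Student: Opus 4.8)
The plan is to prove the two inclusions separately, deducing the harder direction from the field-level Theorem~\ref{thm: Log forms on field F} via the Bloch--Gabber--Kato theorem, working throughout on the stalks of étale sheaves at geometric points of $Y$.

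First I would check that every section of the form $\frac{dy_1}{y_1}\wedge\dots\wedge\frac{dy_q}{y_q}$ with $y_i\in\Os_Y^*$ lies in $\Omega^q_{Y,\log}$. Such a form is closed, since each $\frac{dy_i}{y_i}$ satisfies $d\!\left(\frac{dy_i}{y_i}\right)=0$ and a wedge of closed forms is closed; hence it is a section of $Z^q_Y$. It then remains to verify that it is fixed by the Cartier operator, i.e.\ that $W^*$ and $C_{Y/\ell}$ agree on it. By the multiplicativity of the inverse Cartier operator and its $\Os_{Y^{(p)}}$-semilinearity, this reduces to the single computation $C_{Y/\ell}^{-1}\!\left(\frac{dy}{y}\right)=\frac{dy}{y}$ in $\mathcal{H}^1(\Omega^\bullet_Y)$, which is the sheaf analogue of the field identity $C_F^{-1}(y^{-1}dy)=y^{-p}\cdot y^{p-1}dy=y^{-1}dy \bmod B^1_F$ coming from Lemma~\ref{lemma:inverse cartier operator}. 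This yields the inclusion $\supseteq$ at the level of the generated subsheaf.

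For the reverse inclusion, observe that both $\Omega^q_{Y,\log}$ and the subsheaf $\nu$ generated étale-locally by the $d\log$ forms are étale sheaves, so it is enough to compare their stalks at each geometric point $\bar y$. Writing $A:=\Os^{sh}_{Y,\bar y}$, a regular local $\F_p$-algebra with separably closed residue field, the stalk of $\Omega^q_{Y,\log}$ is the group of Cartier-fixed closed $q$-forms over $A$, while the stalk of $\nu$ is generated by the symbols $d\log u_1\wedge\dots\wedge d\log u_q$ with $u_i\in A^*$. The desired equality is then the surjectivity of $d\log\colon (A^*)^{\otimes q}\to \Omega^q_{A,\log}$ onto the Cartier-fixed forms, i.e.\ the local form of the Bloch--Gabber--Kato theorem that already underlies Theorem~\ref{thm: Log forms on field F}.

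I expect the main obstacle to be precisely this passage from the field-level statement to the sheaf level. At the generic point the claim is literally Theorem~\ref{thm: Log forms on field F} applied to the function field $K(Y)$, which is finitely generated over the perfect field $\ell$; but a general geometric stalk $A=\Os^{sh}_{Y,\bar y}$ has fraction field that need not be finitely generated over $\ell$, and one must moreover produce generators $d\log u$ with $u\in A^*$ rather than merely in $\mathrm{Frac}(A)^*$. Handling this requires the local Bloch--Gabber--Kato isomorphism for regular local $\F_p$-algebras (as developed in \cite{Illusie}), together with a coniveau/residue argument clearing non-unit factors via the factoriality of $A$. A secondary point to treat carefully is the Frobenius twist: one should identify the condition $W^*=C_{Y/\ell}$ on $Z^q_Y$ defining $\Omega^q_{Y,\log}$ with the absolute ``$C^{-1}-\mathrm{id}=0$'' condition of Definition~\ref{def: logarithmic forms} after passing to stalks, so that the sheaf-theoretic and field-theoretic notions of logarithmic form genuinely agree.
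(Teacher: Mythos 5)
Your forward inclusion is correct ($d\log$ forms are closed and Cartier-fixed, and the Frobenius-twist issue you flag is indeed only bookkeeping), and reducing the reverse inclusion to stalks at geometric points is legitimate. The genuine gap is the step you label a ``coniveau/residue argument'': deducing from the field-level Theorem~\ref{thm: Log forms on field F} that a Cartier-fixed closed $q$-form over $A=\Os^{sh}_{Y,\bar y}$ is a sum of forms $d\log u_1\wedge\dots\wedge d\log u_q$ with the $u_i$ \emph{units} of $A$. For $q=1$ this clearing of denominators does follow from factoriality (write $f=u\prod_j\pi_j^{n_j}$; regularity of the form forces each polar coefficient to vanish modulo $p$), but for $q\geq 2$ --- and $q=2$ is the case the paper actually needs --- a wedge $d\log f_1\wedge d\log f_2$ of non-units can be regular because the polar parts cancel against one another, so no termwise argument applies. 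Making this precise is exactly the Gersten-type problem for logarithmic forms, equivalently for Milnor $K$-theory mod $p$; it is a substantial theorem (Gros--Suwa, Kerz), not a routine verification, and the available proofs either presuppose the statement you are proving or require heavy extra machinery (exactness of the Gersten complex for $K^M/p$ together with compatibility of the differential symbol with residue maps). Your appeal to ``the local Bloch--Gabber--Kato isomorphism for regular local $\F_p$-algebras (as developed in \cite{Illusie})'' is circular for the same reason: the result proved there, Theorem~0.2.4.2, \emph{is} the present theorem. A smaller point in the same direction: Theorem~\ref{thm: Log forms on field F} as stated applies only to fields finitely generated over $\ell$, so even the field-level input for $\mathrm{Frac}(A)$ requires the general Bloch--Gabber--Kato theorem, as you noticed.

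For comparison: the paper does not prove this statement at all; its proof consists of the citation to \cite[Theorem~0.2.4.2]{Illusie}. So the part you have black-boxed is precisely the entire mathematical content that a self-contained proof would have to supply. Note also that your route cannot be the one taken in \cite{Illusie}: the field-level Bloch--Gabber--Kato theorem postdates that paper, so the original argument does not pass through the function field at all. If you do not intend to reprove Illusie's theorem, the honest version of your write-up is the paper's, namely to quote \cite[Theorem~0.2.4.2]{Illusie} directly; your first paragraph (the easy inclusion) is correct but is also contained in that reference.
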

\begin{proof}
    See \cite[Theorem $0.2.4.2$]{Illusie}.
\end{proof} 
The Cartier operator induces an exact sequence
\[
0\rightarrow \h^0(Y,B^q_Y) \rightarrow \h^0(Y,Z^q_Y)\xrightarrow{C_{Y}} \h^0(Y,\Omega^q_Y)
\]
We have the following lemma.
\begin{lemma}\label{lemma: (a) global form is log (b) global form is exact}
    Let $\omega \in \h^0(Y,\Omega^q_Y)$, then:
    \begin{enumerate}[label=(\arabic*), ref=\thelemma(\arabic*)]
        \item \label{lemma: (a) global form is log (b) global form is exact 1} $\omega\in \h^0(Y,\Omega^q_{Y,\log})$ if and only if the image of $\omega$ in $\Omega^q_F$ lies in $\Omega^q_{F,\log}$;
        \item \label{lemma: (a) global form is log (b) global form is exact 2} $\omega\in \h^0(Y,B^q_Y)$ if and only if the image of $\omega$ in $\Omega^q_F$ lies in $B^q_F$.
    \end{enumerate}
\end{lemma}
\begin{proof}
    The group $\Omega^q_{F,\log}$ can also be computed as the kernel of 
    \[
        \mathrm{id}-C\colon Z^q_F\rightarrow \Omega^q_F
    \]
    see \cite[Lemma~1.5]{MilneDuality}. Moreover, the natural inclusion of $\h^0(Y,\Omega^q_Y)$ in $\Omega^q_F$ is compatible with the differential maps and with the Cartier operator. The proof of $(1)$ follows from the definition of logarithmic forms, while $(2)$ is an immediate consequence of Corollary~\ref{cor: Exact Forms in terms of Cartier Op}. 
\end{proof}

\begin{corollary}\label{cor: on the image of the refined swan conductor}
    Let $u=p \pi^{-e}\in \Os_L^\times$; then one of the following cases occurs:
    \begin{enumerate}[label=(\arabic*), ref=\thecorollary(\arabic*)]
        \item \label{cor: on the image of the refined swan conductor 1} If $p\nmid n$, then 
        \[
            \mathrm{pr}_2\circ \rsw_{n,\pi}\colon \fil_n \br(X)\rightarrow \h^0(Y,\Omega^1_Y) 
        \]
        has kernel equal to $\fil_{n-1}\br(X)$.
         \item \label{cor: on the image of the refined swan conductor 2} If $p\mid n$ and $n<e'$ we write $n=mp$, then $(\alpha,\beta)$ lies in $ \h^0(Y,Z^2_Y)\oplus \h^0(Y,Z^1_Y)$ and the following diagram
        \begin{equation*}
            \begin{tikzcd}
                \fil_n \br(X) \arrow[d,"(-)^{\otimes p}"]\arrow[r,"\rsw_{n,\pi}"] &\h^0(Y,Z^2_Y)\oplus \h^0(Y,Z^1_Y)\arrow[d,"C"]\\
                \fil_m \br(X) \arrow[r,"\rsw_{m,\pi}"] &\h^0(Y,\Omega^2_Y)\oplus \h^0(Y,\Omega^1_Y).
            \end{tikzcd}
        \end{equation*}
        commutes.
        \item \label{cor: on the image of the refined swan conductor 3} If $n=e'$, then $(\alpha,\beta)$ lies in $ \h^0(Y,Z^2_Y)\oplus \h^0(Y,Z^1_Y)$ and the following diagram
        \begin{equation*}
            \begin{tikzcd}
                \fil_{e'} \br(X) \arrow[d,"(-)^{\otimes p}"]\arrow[r,"\rsw_{n,\pi}"] &\h^0(Y,Z^2_Y)\oplus \h^0(Y,Z^1_Y)\arrow[d,"\mathrm{mult}_{\bar{u}}+C"]\\
                \fil_{e'-e} \br(X) \arrow[r,"\rsw_{n-e,\pi}"] &\h^0(Y,\Omega^2_Y)\oplus \h^0(Y,\Omega^1_Y).
            \end{tikzcd}
        \end{equation*}
        commutes. Moreover, let $\bar{c}\in \ell^\times$ be the reduction of $c=(\zeta-1)^p\pi^{-e'}\in \Os_L^\times$, then $\bar{c}$ is such that 
        \begin{equation*}
                \mathrm{mult}_{\bar{c}}\left(\rsw_{e',\pi} \fil_{e'}\br(X)[p]\right)\subseteq \h^0(Y,\Omega^2_{Y,\log})\oplus \h^0(Y,\Omega^1_{Y,\log}).
        \end{equation*}
        \item \label{cor: on the image of the refined swan conductor 4} If $p\mid n$ and $n>e'$, then $(\alpha,\beta)$ lies in $\h^0(Y,Z^2_Y)\oplus \h^0(Y,Z^1_Y)$ and the following diagram
        \begin{equation*}
            \begin{tikzcd}
                \fil_n \br(X) \arrow[d,"(-)^{\otimes p}"]\arrow[r,"\rsw_{n,\pi}"] &\h^0(Y,Z^2_Y)\oplus \h^0(Y,Z^1_Y)\arrow[d,"\mathrm{mult}_{\bar{u}}"]\\
                \fil_{n-e} \br(X) \arrow[r,"\rsw_{n-e,\pi}"] &\h^0(Y,\Omega^2_Y)\oplus \h^0(Y,\Omega^1_Y).
            \end{tikzcd}
        \end{equation*}
        commutes.
    \end{enumerate}
\end{corollary}
\begin{proof}
    All these properties are consequences of Section~\ref{subsection: properties of the image of the refined Swan conductor}. More precisely: $(1)$ follows from Remark \ref{rmk: rsw comp proj_2 2}; $(2)$ follows from Lemma \ref{lemma: refined np with np<e'}; $(3)$ and $(4)$ are a direct consequence of \cite[Lemma~$2.19$]{BrightNewton}.
\end{proof}
\begin{rmk}
    For $\mathcal{F}=\Omega^q_Y,Z^q_Y$ or $B^q_F$, the cohomology group $\h^q(Y,\mathcal{F})$ in the Zariski or small $\et$ale site are isomorphic, and hence we will not specify over which site we are computing them. 
\end{rmk}
\subsection{Refined Swan conductor on cyclic algebras}
This section is needed to understand the computation in the examples appearing in this paper, it is not used to prove the main theorems.

Let $K$ be a Henselian field of characteristic $0$ with ring of integers $\Os_K$, uniformiser $\pi$ and residue field $F$ of positive characteristic $p$. Here we assume the field $K$ to contain a primitive $p$-root of unity $\zeta$. It follows from the Merkurjev-Suslin Theorem \cite[Theorem 8.6.5]{GilleSzamuely} that in this case $\br(K)[p]$ is generated by the classes of \emph{cyclic algebras}. The study of the $p$-primary part of the Galois
cohomology of a complete discrete valuation field K in terms of its residue field F,
which is not necessarily perfect goes back to Kato \cite{Kato2} (see \cite{ColliotTheleneSurvey} for a survey on the topic).

\begin{defi}
   Let $a,b\in K^\times$, then the $K$-algebra $(x,y)_p$ defined as 
\[
    (x,y)_p:=\langle a,b\mid a^p=x, \, b^p=y, \, ab=\zeta ba\rangle,
\]
is a central simple algebra, see \cite[Section~2.5]{GilleSzamuely}. With an abuse of notation we denote by $(x,y)_p$ also the corresponding equivalence class in $\br(K)[p]$.  
\end{defi}
It is possible to realise $(x,y)_p$ also as the cup product of an element $\chi_x\in \h^1_p(K)$ with $\delta(y)$, where $\delta$ is the boundary map $K^\times \rightarrow \h^1(K,\Z/p\Z(1))$ coming from the Kummer sequence, a proof can be found in the proof of Proposition~4.7.1 \cite{GilleSzamuely}.

\subsubsection{Bloch-Kato filtration}

The map from $\Z/p\Z$ to $\Z/p\Z(1)$ sending $1$ to $\zeta$ induces an isomorphism\begin{equation}\label{eqIsomoprhismRootUnity}
    \br(K)[p]\simeq \h^2\left(K,\Z/p\Z(2)\right)=:h^2(K).
\end{equation}
For any two non-zero elements $x,y\in K$ we denote by $\{x,y\}\in h^2(K)$ the cup product of $\delta(x)$ with $\delta(y)$. 

\vspace{2mm}

Bloch and Kato \cite{BlochKatoEtale} define a decreasing filtration $\{U^m h^2(K)\}_{m\geq 0}$ on $h^2(K)$ as follows: $U^0h^2(K)=h^2(K)$ and for $m\geq 1, \, U^m h^2(K)$ is the subgroup of $h^2(K)$ generated by symbols of the form 
\[
    \{1+\pi^m x, y\}, \text{ with }x\in \Os_K \text{ and }y\in K^\times.
\]
They describe the graded factors of the filtration
\[
    \mathrm{gr}^m:=\frac{U^mh^2(K^h)}{U^{m+1}h^2(K^h)}
\]
in terms of differential forms on the residue field $F$. 
\begin{prop}\label{propGradedPieces}
We have the following description of the graded factors $\mathrm{gr}^m.$
\begin{enumerate}[label=(\arabic*), ref=\theprop.(\arabic*)]
    \item\label{propGradedPieces 1} $U^m h^2(K^h)=\{0\}$ for $m> e'$; $U^{e'}h^2(K)$ coincides with the image of the injective map
    \begin{align*}
        \lambda_\pi \colon \h^2_p(F)\oplus \h^{1}_p(F)&\rightarrow \mathrm{gr}^{e'}=h^2(K)\\
        \delta_1\left[ \bar{x} \cdot d \log \bar{y} \right] &\mapsto \{1+(\zeta-1)^px,y\}\\
        \delta_1 \left[ \bar{x}\right] &\mapsto \{1+(\zeta-1)^px,\pi\}
    \end{align*}
    where $x$ and $y$ are any lifts of $\bar{x}$ and $\bar{y}$ to $K$ and $\delta_1[\omega]$ as in equation~\eqref{eq: delta and cup product}.  
    \item \label{propGradedPieces 2} Let $0<m<e'$ and $p\nmid m$. Then we have an isomorphism 
    \begin{align*}
        \rho_m\colon  \Omega^{1}_F &\xrightarrow{\simeq} \mathrm{gr}^m\\
        \bar{x} \cdot d \log \bar{y} &\mapsto \{1+\pi^m x,y\}
    \end{align*}
    where $x$ and $y$ are any lifts of $\bar{x}$ and $\bar{y}$ to $K$.  
    \item \label{propGradedPieces 3} Let $0<m<e'$ with $p\mid m$. Then we have an isomorphism 
    \begin{align*}
        \rho_m\colon \Omega^{1}_F/Z^1_F\oplus \Omega^0_F/Z^0_F &\xrightarrow{\simeq} \mathrm{gr}^m\\
        (\left[\bar{x} \cdot 
        d \log \bar{y}\right],0) &\mapsto \{1+\pi^m x,y\}\\
        (0,\left[\bar{x}\right])&\mapsto \{1+\pi^m x,\pi\}
    \end{align*}
    where $x$ and $y$ are any lifts of $\bar{x}$ and $\bar{y}$ to $K$, and $[\omega]$ is the image of $\omega\in \Omega^i_F$ in $\Omega^i_F/Z^i_F$, with $i=0,1$.  
    \item\label{propGradedPieces 4} We have an isomorphism 
    \begin{align*}
        \rho_0\colon \Omega^{2}_{F,\log}\oplus \Omega^{1}_{F,\log} &\xrightarrow{\simeq} \mathrm{gr}^0\\
        (d\log \bar{y}_1\wedge d \log \bar{y}_2,0) &\mapsto \{y_1,y_2\}\\
        (0,d \log \bar{y})&\mapsto \{ y,\pi\}
    \end{align*}
    where $y,y_1$ and $y_2$ are any lifts of $\bar{y},\bar{y}_1$ and $\bar{y}_2$ to $K$.  
\end{enumerate}
\end{prop}
\begin{proof}
     See \cite[Section~5]{BlochKatoEtale}. More precisely, in \cite[Lemma~5.1]{BlochKatoEtale} Bloch and Kato prove \ref{propGradedPieces 1}. However, instead of $\lambda_\pi$ (cf. Remark~\ref{rmk: different definition of lambda_pi}) they have the morphism $\rho_{e'}$, and they just prove afterwards \cite[equation~(5.15.1)]{BlochKatoEtale} that $\rho_{e'}$ induces the map $\lambda_\pi$. Finally, in \cite[Lemma~5.2 and 5.3]{BlochKatoEtale} the rest of the proposition is proven.
\end{proof}
Note that since by assumption $K$ contains a primitive $p$-root of unity $\zeta$, the ramification index $e=\mathrm{ord}_K(p)$ is divisible by $(p-1)$ and hence $e'=ep(p-1)^{-1}$ is divisible by $p$. 
The key result of this section is the following proposition, which allows to use Bloch and Kato filtration to compute the refined Swan conductor. 
\begin{prop}\label{prop: rsw and map rho_m}
For every $0\leq m \leq e'$ we have that the isomorphism~\eqref{eqIsomoprhismRootUnity} induces an isomorphism
\[
\fil_{e'-m}\br(K)[p]\simeq U^m h^2(K).
\]
    Let $\bar{c}$ be the reduction modulo $\pi$ of $c:=\pi^{-e'}(\zeta-1)^{p}$. For $m<e'$ the compositions $\rsw_{e'-m}\circ \rho_m$ are as follows:
    \begin{align*}
        &\rsw_{e',\pi}(\rho_0(\alpha,\beta))=(\bar{c}\alpha,\bar{c}\beta) \\
        &\rsw_{e'-m,\pi}(\rho_m(\beta))=(\bar{c}d \beta,(e'-m)\bar{c}\beta), \, &\text{if } p\nmid m \\
        & \rsw_{e'-m,\pi}(\rho_m(\alpha,\beta))=(\bar{c}d \alpha,\bar{c}d\beta), \, &\text{if } p\mid m 
    \end{align*}
    where $\rho_m$ is the map defined in the previous proposition.
\end{prop}
Proposition~\ref{prop: rsw and map rho_m} is a rephrasing of a result proven in Kato~\cite[Lemma~4.3]{Kato}\footnote{A (quite technical) proof of it with some extra details can be found in \cite[Section 4.2.1]{PhDthesis}.}.

\subsection{Refined Swan conductor and extension of the base field}\label{subsection: Refined Swan conductor and extension of the base field}
In this section we analyse the behaviour of the refined Swan conductor under field extension of the base field. In \cite{BrightNewton} Bright and Newton prove the following result.
\begin{lemma}\label{lemma: rsw of field extension K'/K}
    Let $K'/K$ be a finite extension of Henselian discrete valuation fields of ramification index $e$. Let $\pi'$ be a uniformiser in $K'$, $F'$ be the residue field of $K'$ and define $\bar{a}\in F'$ to be the reduction of $\pi(\pi')^{-e}$. Let $\chi \in \fil_n \br(K)$, and let 
    \[
    \mathrm{res}\colon \br(K)\rightarrow \br(K')
    \]
    be the restriction map. Then $\mathrm{res}(\chi)\in \fil_{en}\br(K')$ and if $\rsw_{n,\pi}(\chi)=(\alpha,\beta)$, then 
    \[
        \rsw_{en,\pi'}(\mathrm{res}(\chi))=(\bar{a}^{-n}(\alpha+\beta\wedge d\log(\bar{a}), \bar{a}^{-n}e \beta).
    \]
\end{lemma}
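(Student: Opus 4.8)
\textbf{Proof proposal for Lemma~\ref{lemma: rsw of field extension K'/K}.}

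The plan is to compute both sides directly from the defining property of the refined Swan conductor given in Theorem~\ref{Thm: existence of refined Swan conductor}, namely that $\rsw_{n,\pi}(\chi)=(\alpha,\beta)$ is characterised by the identity $\{\chi,1+\pi^n T\}=\lambda_\pi(T\alpha,T\beta)$ in $V^{q+1}_p(\Os_K[T])$. First I would establish that $\mathrm{res}(\chi)\in\fil_{en}\br(K')$: since $\pi=\bar{a}\cdot(\pi')^e\cdot(1+\dots)$ up to a unit congruence, one checks that for any $b(T)\in\Os_{K'}[T]$ the symbol $\{\mathrm{res}(\chi),1+(\pi')^{en+1}b(T)\}$ vanishes, by rewriting $1+(\pi')^{en+1}b(T)$ in terms of powers of $\pi$ and using that $\chi\in\fil_n\br(K)$ already kills all symbols with $1+\pi^{n+1}(\cdots)$. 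This is the routine part.

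The core computation is to evaluate $\{\mathrm{res}(\chi),1+(\pi')^{en}T\}$ and match it against $\lambda_{\pi'}$ applied to a pair in $\Omega^q_{F'}\oplus\Omega^{q-1}_{F'}$. The key algebraic input is the relation $\pi=u\,(\pi')^{e}$ for a unit $u$ with reduction $\bar{a}$, so that $\pi^n = u^n (\pi')^{en}$ and hence $1+(\pi')^{en}T = 1+\pi^n(u^{-n}T)$. Substituting, one gets
\[
\{\mathrm{res}(\chi),1+(\pi')^{en}T\}=\{\chi,1+\pi^n(\bar{a}^{-n}T+\cdots)\}=\lambda_\pi\bigl((\bar{a}^{-n}T)\alpha,(\bar{a}^{-n}T)\beta\bigr)
\]
in the appropriate $V$-group, now interpreted over $\Os_{K'}$. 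The subtlety is that $\lambda_\pi$ and $\lambda_{\pi'}$ differ because they are built from different uniformisers; the comparison of $\lambda_\pi$ with $\lambda_{\pi'}$ is exactly what produces the correction term $\beta\wedge d\log(\bar{a})$ and the factor $e$ in the second component. Concretely, I would invoke Kato's formula (as recorded in \cite{Kato}) describing how $\lambda_\pi$ transforms when the uniformiser is changed from $\pi$ to $\pi'$: replacing $\pi$ by $u(\pi')^e$ introduces a logarithmic derivative $d\log\bar{a}$ wedged into the $\beta$-component and multiplies the lower-degree piece by $e=\mathrm{ord}_{\pi'}(\pi')^e$, since $d\log\pi = e\,d\log\pi' + d\log u$ modulo the residue field.

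The main obstacle I expect is bookkeeping the $\lambda_\pi$-versus-$\lambda_{\pi'}$ change-of-uniformiser formula precisely, particularly tracking how the two summands $\Omega^q_F\to\Omega^q_{F'}$ and $\Omega^{q-1}_F\to\Omega^{q-1}_{F'}$ mix: the $\beta$-component (the lower-degree form attached to the uniformiser) contributes both to the new top-degree component, via $\beta\wedge d\log\bar{a}$, and to the new lower-degree component, scaled by $e$, reflecting that $\pi=u(\pi')^e$ has $\pi'$-valuation $e$. Once this transformation law is in hand, reading off $\rsw_{en,\pi'}(\mathrm{res}(\chi))=\bigl(\bar{a}^{-n}(\alpha+\beta\wedge d\log\bar{a}),\,\bar{a}^{-n}e\,\beta\bigr)$ is immediate from the uniqueness clause in Theorem~\ref{Thm: existence of refined Swan conductor}, since the pair is determined by the single identity $\{\mathrm{res}(\chi),1+(\pi')^{en}T\}=\lambda_{\pi'}(T\cdot(\text{top}),T\cdot(\text{lower}))$. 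Throughout I would reduce to the $p$-torsion case first, where everything lives in $V^{q+1}_p$ and the explicit symbol calculus of \cite{Kato} applies cleanly, and then pass to the general statement by the filtration-compatibility already built into the definition of $\rsw_{n,\pi}$.
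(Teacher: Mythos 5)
The first thing to note is that the paper offers no proof to compare against: its proof of Lemma~\ref{lemma: rsw of field extension K'/K} is the single line ``See \cite[Lemma~2.7]{BrightNewton}''. So your proposal is a blind reconstruction of Bright--Newton's argument, and its computational core is indeed the standard one and is correct in outline: write $\pi=u(\pi')^e$ with $\bar u=\bar a$, transport the defining identity $\{\chi,1+\pi^nT\}=\lambda_\pi(T\alpha,T\beta)$ of Theorem~\ref{Thm: existence of refined Swan conductor} along the substitution $T\mapsto u^{-n}T$ into the $V$-group over $\Os_{K'}[T]$, and rewrite $\lambda_\pi$ in terms of $\lambda_{\pi'}$ using $\{\,\cdot\,,\pi\}=\{\,\cdot\,,u\}+e\{\,\cdot\,,\pi'\}$; this produces exactly the correction term $\beta\wedge d\log(\bar a)$ in the degree-two component and the factor $e$ in the degree-one component, and the uniqueness clause of Theorem~\ref{Thm: existence of refined Swan conductor} then identifies the resulting pair as $\rsw_{en,\pi'}(\mathrm{res}(\chi))$.

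The genuine gap is in what you call ``the routine part'', namely $\mathrm{res}(\chi)\in\fil_{en}\br(K')$. The mechanism you describe --- rewrite $1+(\pi')^{en+1}b(T)$ in powers of $\pi$ and use that $\chi\in\fil_n\br(K)$ kills symbols $\{\chi,1+\pi^{n+1}(\cdots)\}$ --- fails whenever $e>1$: since $(\pi')^{en+1}=\pi^{n+1}u^{-(n+1)}(\pi')^{1-e}$ and $(\pi')^{1-e}\notin\Os_{K'}$, the element $1+(\pi')^{en+1}b(T)$ is \emph{not} of the form $1+\pi^{n+1}c(T)$ with $c(T)\in\Os_{K'}[T]$. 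What the $\fil_n$-condition actually yields, via substitutions $T\mapsto c(T)$, is vanishing on $1+\pi^{n+1}\Os_{K'}[T]=1+(\pi')^{en+e}\Os_{K'}[T]$, i.e.\ only $\mathrm{res}(\chi)\in\fil_{en+e-1}\br(K')$, strictly weaker than the claim for $e>1$. The sharp bound has to come from the level-$n$ data rather than the level-$(n+1)$ vanishing: for instance, Kato's formula (6.3.1) (the same one the paper uses in the proof of Lemma~\ref{lemma: refined np with np<e'}) gives $\{\chi,1+\pi^nc\}=\lambda_\pi(\bar c\,\alpha,\bar c\,\beta)$ for integral $c$, so taking $c=\pi'u^{-n}T$, whose reduction modulo the maximal ideal is $0$, yields $\{\mathrm{res}(\chi),1+(\pi')^{en+1}T\}=\lambda_\pi(0,0)=0$, which is exactly the required membership; here one uses the same functoriality of the $V$-groups along $\Os_K[T]\to\Os_{K'}[T]$ that your main computation already relies on. (Alternatively, one can invoke the compatibility of Kato's filtration with finite extensions directly from \cite{Kato}.) You do respect the correct order of steps --- membership first, since $\rsw_{en,\pi'}$ is only defined on $\fil_{en}\br(K')$ --- so once this step is repaired along the lines above, the rest of your sketch goes through.
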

\begin{proof}
    See \cite[Lemma~2.7]{BrightNewton}.
\end{proof}
We are back to the general setting of Section~\ref{Section: General Setting}. The aim of this section is to prove the following Lemma. 
\begin{lemma}\label{lemma: base change L'/L}
Let $L'/L$ be a finite field extension, with ramification index $e_{L'/L}$. Let $\pi'$ be a uniformiser in $L'$ and $\ell'$ its residue field.  Let $\A\in \fil_n \br(X)$ and let 
\[
    \mathrm{res}\colon \br(X)\rightarrow \br(X_{L'})
\]
be the restriction map. Then $\mathrm{res}(\A)\in \fil_{e_{L'/L}n}\br(X_{L'})$ and if $\rsw_{n,\pi}(\A)=(\alpha,\beta)$ with $(\alpha,\beta)\in \h^0(Y,\Omega^2_Y)\oplus \h^0(Y,\Omega^1_Y)$, then 
\[
   \rsw_{e_{L'/L}n,\pi'}(\mathrm{res}(\A))=(\bar{a}^{-n}\alpha, \bar{a}^{-n}e_{L'/L}\beta)\in \h^0(Y_{\ell'},\Omega^2_{Y_{\ell'}})\oplus\h^0(Y_{\ell'},\Omega^1_{Y_{\ell'}}) 
\]
with $\bar{a}\in \ell'$ reduction of $\pi(\pi')^{-e_{L'/L}}$.
\end{lemma}
By definition of the refined Swan conductor on $\br(X)$  we have the following commutative diagram
\[
    \begin{tikzcd}
        \fil_n\br(X) \arrow[d] \arrow[r,"\rsw_{n,\pi}"] &\h^0(Y,\Omega^2_Y)\oplus \h^0(Y,\Omega^1_Y) \arrow[d,hookrightarrow]\\
        \fil_n\br(K^h)\arrow[r,"\rsw_{n,\pi}"] &\Omega^2_F\oplus \Omega^1_F.
    \end{tikzcd}
\]

We recall the construction of $K^h$: let $\eta$ be the generic point of $Y\subseteq\mathcal{X}$, then we define $R$ as the henselisation of the discrete valuation ring $\Os_{\mathcal{X},\eta}$ and $K^h$ as the fraction field of $R$. 
%By assumption $\mathcal{X}$ is smooth, and $Y$ is of co-dimension $1$, hence $\Os_{\mathcal{X},\eta}$ is a discrete valuation ring. 
The construction of $\Os_{\mathcal{X},\eta}$ (and hence of $K^h$) is local on $\mathcal{X}$. Therefore we can assume $\mathcal{X}=\spec(A)$, with $A$ smooth $\Os_L$-algebra, $Y=\spec(A/\pi A)$; hence $\eta=(\pi)\in \spec(A)$ and $\Os_{\mathcal{X},\eta}=A_{(\pi)}$. We can re-write diagram~\eqref{eq:diagram1} as:
\begin{equation}\label{eq:diagramAffine}
    \begin{tikzcd}
        A\otimes_{\Os_L} L & \arrow[l] A \arrow[r] &A/\pi A\\
        L \arrow[u] & \arrow[l] \Os_L \arrow[r]\arrow[u] & \ell. \arrow[u]
    \end{tikzcd}
\end{equation}
We denote by $X'$, $\mathcal{X}'$ and $Y'$ the base change of $X$, $\mathcal{X}$ and $Y$ to $\spec(L'), \spec(\Os_{L'})$ and $\spec(\ell')$, respectively.  Let $(K')^h$ be the fraction field of $R'$, where $R'$ is the Henselianisation of the discrete valuation ring $\Os_{\mathcal{X}',Y'}$. In this setting
\[
    \frac{A}{\pi A}\otimes_{\ell} \ell' = \left(A\otimes_{\Os_{L}} \ell\right) \otimes_\ell \ell' =\left(A\otimes_{\Os_L} \Os_{L'} \right)\otimes_{\Os_{L'}} \ell' =\frac{A\otimes_{\Os_L} \Os_{L'}}{(1\otimes \pi')}.
\]
Thus, the generic point $\eta'$ of $Y'$ is the ideal generated by $1\otimes \pi'$ and $\Os_{\mathcal{X}',Y'}$ becomes the ring $(A\otimes_{\Os_L} \Os_{L'})_{(1\otimes \pi')}$. Using \cite[Lemma 04GH]{Stacks} and \cite[Lemma 05WP]{Stacks} we get that  $(K')^h/K^h$ is finite with ramification index $e_{L'/L}$

\begin{proof}[Proof of Lemma~\ref{lemma: base change L'/L}]
    It follows immediately from the observation above together with Lemma \ref{lemma: rsw of field extension K'/K} and the fact that since $\bar{a}\in \ell'$, which is a finite field, $d\log(\bar{a})=0$.
\end{proof}
Lemma~\ref{lemma: base change L'/L} gives a way to detect whether an element $\A\in \br(X)$ is transcendental.
\begin{corollary}\label{cor: refined Swan conductor and algebraic elements}
    Assume that $\A\in \fil_n \br(X)$ for some $n\geq 1$ is such that $\rsw_{n,\pi}(\A)=(\alpha,\beta)$ with $\alpha \ne 0$, then $\A\notin \br_1(X)$, i.e. $\A$ is a transcendental element in the Brauer group of $X$.
\end{corollary}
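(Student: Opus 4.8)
The plan is to establish two facts and combine them: that the hypothesis $\alpha\neq0$ pushes $\A$ strictly beyond $\fil_0\br(X)$, while every algebraic class is trapped inside $\fil_0\br(X)$. Since these give $\br_1(X)\subseteq\fil_0\br(X)$ and $\A\notin\fil_0\br(X)$, the conclusion $\A\notin\br_1(X)$ is immediate.

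For the first fact, note that $\rsw_{n,\pi}(\A)=(\alpha,\beta)\neq0$ because $\alpha\neq0$. Transporting Theorem~\ref{Thm: existence of refined Swan conductor} to $\br(X)$ through the commutative square that defines the refined Swan conductor there, the kernel of $\rsw_{n,\pi}$ on $\fil_n\br(X)$ is exactly $\fil_{n-1}\br(X)$; hence $\A\notin\fil_{n-1}\br(X)$. As $n\geq1$ and the filtration is increasing, $\fil_0\br(X)\subseteq\fil_{n-1}\br(X)$, so $\A\notin\fil_0\br(X)=\Ev_0\br(X)$, using Theorem~\ref{thm: evaluation fil and refined Swan conductor}(a).

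For the second fact I would show $\br_1(X)\subseteq\Ev_{-1}\br(X)\subseteq\Ev_0\br(X)=\fil_0\br(X)$. Fix $\mathcal{B}\in\br_1(X)$ and a finite extension $L'/L$. Because a finite extension of $L$ has the same algebraic closure, the composite $\br(X)\to\br(X_{L'})\to\br(\bar{X})$ is the structural map defining $\br_1$, so $\mathrm{res}(\mathcal{B})$ is again algebraic on $X_{L'}$, which still has good reduction at the prime above $\ip$. By Colliot-Th\'{e}l\`{e}ne--Skorobogatov \cite[Proposition~2.3]{colliotskogoodred}, the evaluation of an algebraic class at a prime of good reduction is constant, so $\ev_{\mathcal{B}}$ is constant on $X(L')$. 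Properness gives $X(L')=\mathcal{X}(\Os_{L'})$, and as $L'$ ranges over all finite extensions this is precisely the defining condition of $\Ev_{-1}\br(X)$.

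The step that requires care is this second one: the conclusion $\mathcal{B}\in\Ev_{-1}\br(X)$ needs constancy uniformly over all finite $L'/L$, not merely over $L$, so one must verify both that restriction preserves algebraicity and that good reduction is inherited by $X_{L'}$ before invoking Colliot-Th\'{e}l\`{e}ne--Skorobogatov at each level. I would also remark that only $(\alpha,\beta)\neq0$ is logically used; the stronger hypothesis $\alpha\neq0$ is the natural one because, by Lemma~\ref{lemma: base change L'/L}, nonvanishing of the first component is exactly what is preserved under any base change, whereas $\beta$ can be annihilated when $p\mid e_{L'/L}$.
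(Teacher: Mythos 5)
Your first step is fine: since the refined Swan conductor on $\fil_n\br(X)$ is computed through $\br(K^h)$, where its kernel is $\fil_{n-1}$, a nonzero $\rsw_{n,\pi}(\A)$ forces $\A\notin\fil_{n-1}\br(X)\supseteq\fil_0\br(X)$. The gap is your second step, the inclusion $\br_1(X)\subseteq\Ev_{-1}\br(X)$. The Colliot-Th\'{e}l\`{e}ne--Skorobogatov proposition you invoke is not an unconditional statement about smooth proper varieties with good reduction: it is proved under a hypothesis of the type ``$\Pic(\bar{X})$ torsion-free and finitely generated'' (the standing assumption of the Question in the introduction), and no such statement can hold without it. K$3$ surfaces satisfy that hypothesis, which is why the paper may cite the same proposition in Lemma~\ref{lemma: transcendental nature of A} --- a statement restricted to K$3$ surfaces. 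But the corollary is stated in the general setting of Section~\ref{Section: General Setting} and is applied later to abelian surfaces $A=E_1\times E_2$ (in Lemma~\ref{lemma: A_epsilon_1,epsilon_2 in Ev_-1}, and in Claim 3 of the example over $\Q_3(\zeta)$ in Section~\ref{section: ordinary case examples}). For an abelian variety $A$ with good reduction over $L$ your intermediate inclusion is actually false: Tate local duality gives a perfect pairing $A(L)\times \h^1(L,\hat{A})\to\Q/\Z$, every class of $\h^1(L,\hat{A})=\h^1(L,\Pic^0(\bar{A}))$ lifts modulo constant classes to a class of $\br_1(A)$ (using $\h^3(L,\bar{L}^\times)=0$), and by Lichtenbaum's duality theorem the evaluation map of such a lift computes the Tate pairing. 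So there are algebraic classes whose evaluation map is non-constant already on $A(L)$, which by properness equals the set of $\Os_L$-points of the model; hence $\br_1(A)\not\subseteq\Ev_{-1}\br(A)$, and your argument cannot establish the corollary in the generality in which the paper uses it.

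Your closing remark is in fact the symptom of the problem: your argument nowhere uses $\alpha\ne0$, so if it were valid it would show that every class with $\rsw_{n,\pi}(\A)\ne(0,0)$ is transcendental, i.e.\ $\br_1(X)\subseteq\fil_0\br(X)$ unconditionally. That stronger statement is exactly what one cannot expect outside the torsion-free-$\Pic$ world: an algebraic class may well have refined Swan conductor $(0,\beta)$ with $\beta\ne0$, and this is compatible with the class dying over a finite extension because, by Lemma~\ref{lemma: base change L'/L}, restriction multiplies $\beta$ by $e_{L'/L}$, which annihilates it whenever $p\mid e_{L'/L}$. The paper's proof is built around precisely this asymmetry and uses no evaluation-theoretic input at all: if $\A\in\br_1(X)$, then $\mathrm{res}(\A)=0$ in $\br(X_{L'})$ for some finite $L'/L$, while Lemma~\ref{lemma: base change L'/L} gives $\rsw_{e_{L'/L}n,\pi'}(\mathrm{res}(\A))=(\bar{a}^{-n}\alpha,\bar{a}^{-n}e_{L'/L}\beta)$ with $\bar{a}\in(\ell')^\times$, whose first component is nonzero since $\alpha\ne 0$; so $\mathrm{res}(\A)\ne0$, a contradiction. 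That base-change argument needs no hypothesis on $\Pic(\bar{X})$ and is the route you should take; the alternative of adding the torsion-free finitely generated $\Pic(\bar{X})$ assumption to your version would make the corollary too weak for the paper's abelian-surface applications.
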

\begin{proof}
    Assume $\A$ to be in $\br_1(X)$; then by definition of $\br_1(X)$ there is a finite field extension $L'/L$ such that $\mathrm{res}(\A)=0$ in $\br(X_{L'})$, where $\mathrm{res}$ is the restriction map from $\br(X)$ to $\br(X_{L'})$. Let $e_{L'/L}$ be the ramification index of the extension, $\pi'$ be a uniformiser of $L'$ and $\ell'$ its residue field. We know from Lemma~\ref{lemma: base change L'/L} that 
        \[
        \rsw_{e(L'/L)n,\pi_{L'}}(\mathrm{res}(\A))=(\bar{a}^{-n}\cdot \alpha,\bar{a}^{-n}e_{L'/L}\cdot \beta)
        \]
        where $\bar{a}^{-n}\in (\ell')^\times$. Hence, $\rsw_{e(L'/L)n,\pi_{L'}}(\mathrm{res}(\A))\ne (0,0)$ and, therefore, $\mathrm{res}(\A)$ cannot be the trivial element.
\end{proof}
We end this section with a lemma that shows how the evaluation map behaves under base change without the assumption of good reduction for $X$. 
\begin{lemma}\label{lemma: base change no good-reduction}
    Let $X$ be a variety over a p-adic field $L$, not necessarily having good reduction, and let $\A\in \br(X)\{p\}$ be such that $\ev_\A\colon X(L)\rightarrow \br(L)$ is non-constant. Then for every field extension $L'/L$ with order co-prime to $p$ we have that $\mathrm{res}(\A)\in \br(X_{L'})$ has also a non-constant evaluation map $\ev_{\mathrm{res}(\A)}\colon X_{L'}(L')\rightarrow \br(L')$.
\end{lemma}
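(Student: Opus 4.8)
The plan is to reduce the statement to a single fact about local fields: for a $p$-adic field $L$ and a finite extension $L'/L$ of degree coprime to $p$, the restriction map $\mathrm{res}_{L'/L}\colon \br(L)\to\br(L')$ is injective on $p$-primary torsion. Once this is in hand, the non-constancy of the evaluation map transfers automatically, because evaluation commutes with base extension.

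First I would record the compatibility of evaluation with the base change $L'/L$. Given $P\in X(L)$, let $P_{L'}\in X_{L'}(L')$ be the point obtained from $\spec L'\to\spec L\xrightarrow{P}X$ together with the identity on $\spec L'$, so that $\mathrm{pr}\circ P_{L'}=P\circ(\spec L'\to\spec L)$, where $\mathrm{pr}\colon X_{L'}\to X$ is the projection. Since $\mathrm{res}(\A)=\mathrm{pr}^*\A$, pulling back along $P_{L'}$ and using this commuting square gives
\[
    \ev_{\mathrm{res}(\A)}(P_{L'}) = \mathrm{res}_{L'/L}\bigl(\ev_\A(P)\bigr),
\]
where $\mathrm{res}_{L'/L}\colon\br(L)\to\br(L')$ is the restriction on Brauer groups of fields. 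This step is purely formal.

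Next I would invoke local class field theory: under the invariant isomorphisms $\inv_L\colon\br(L)\xrightarrow{\sim}\Q/\Z$ and $\inv_{L'}\colon\br(L')\xrightarrow{\sim}\Q/\Z$ one has $\inv_{L'}\circ\mathrm{res}_{L'/L}=[L':L]\cdot\inv_L$, so that $\mathrm{res}_{L'/L}$ is multiplication by $[L':L]$ on $\Q/\Z$. As $[L':L]$ is coprime to $p$, it is invertible in $\Z_p$, hence multiplication by $[L':L]$ is an automorphism of the $p$-primary part $\Q_p/\Z_p=(\Q/\Z)\{p\}$; therefore $\mathrm{res}_{L'/L}$ is injective on $\br(L)\{p\}$.

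Finally I conclude. Since $\A\in\br(X)\{p\}$, every value $\ev_\A(P)=P^*\A$ lies in $\br(L)\{p\}$. By hypothesis there exist $P,Q\in X(L)$ with $\ev_\A(P)\neq\ev_\A(Q)$, so $\ev_\A(P)-\ev_\A(Q)$ is a nonzero element of $\br(L)\{p\}$; applying the injective map $\mathrm{res}_{L'/L}$ and the compatibility above yields $\ev_{\mathrm{res}(\A)}(P_{L'})\neq\ev_{\mathrm{res}(\A)}(Q_{L'})$, so $\ev_{\mathrm{res}(\A)}$ is non-constant. The only points needing care are that evaluation truly commutes with the base extension and that the relevant restriction is the multiplication-by-degree map on invariants; both are standard, so I expect no genuine obstacle, the essential input being the coprimality of $[L':L]$ to $p$.
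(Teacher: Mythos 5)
Your proposal is correct and follows essentially the same route as the paper: establish the formal compatibility $\ev_{\mathrm{res}(\A)}(P_{L'})=\mathrm{res}_{L'/L}(\ev_\A(P))$ by functoriality, then use that $\mathrm{res}_{L'/L}\colon\br(L)\to\br(L')$ is injective on $p$-primary torsion when $[L':L]$ is coprime to $p$. The only difference is cosmetic: the paper simply cites this injectivity, whereas you justify it via the local invariant maps of class field theory, which is a fine (and standard) way to make that step explicit.
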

\begin{proof}
   Let $P,Q\in X(L)$ be such that 
    \[
        \ev_{\A}(P)\ne \ev_{\A}(Q).
    \]
    Denote by $P'$ and $Q'$ the base change of $P$ and $Q$ to $L'$.
    Then since $L'/L$ has degree co-prime to $p$ the restriction map $\br(L)\rightarrow \br(L')$ is injective on the $p$-primary torsion subgroup; hence
    \[
    \ev_{\mathrm{res}(A)}(P')\ne \ev_{\mathrm{res}(A)}(Q').
    \]
\end{proof}
\section{Ordinary good reduction }\label{section:ordinarygoodred}
Let $V$ be a smooth, proper and geometrically integral $k$-variety and $\mathcal{V}$ an $\Os_{k}$-model of $V$. The main result of this section is the following theorem.
\begin{thm}\label{thm: ordinary good reduction}
    Let $\ip$ be a prime of good ordinary reduction for $V$ of residue characteristic $p$. Assume that the special fibre $\mathcal{V}(\ip)$ has no non-trivial global $1$-forms, $\h^1(\overline{\mathcal{V}(\ip)},\Z/p\Z)=0$ and $(p-1)\nmid e_{\ip}$. Then the prime $\ip$ does not play a role in the Brauer--Manin obstruction to weak approximation on $V$.
\end{thm}
Let $X$ be the base change to $k_{\ip}$ of $V$, $\mathcal{X}$ the base change of $\mathcal{V}$ to $\Os_{\ip}$ and $Y$ the special fibre $\mathcal{V}(\ip)$. As already explained in Section~\ref{Section: General Setting} if we are able to show that for all elements $\A\in \br(X)$ the evaluation map $\ev_\A\colon X(k_{\ip})\rightarrow \Q/\Z$ is constant, then the prime $\ip$ does not play a role in the Brauer--Manin obstruction to weak approximation on $V$. 

\vspace{2mm}

We start with the definition of ordinary variety. 
\begin{defi}\label{def: ordinary var}
    Let $Y$ be a smooth, proper and geometrically integral variety over a perfect field $\ell$ of positive characteristic. We say that $Y$ is \emph{ordinary} if $\h^i(Y,B^q_Y)=0$ for every $i,q$.
\end{defi}

    As pointed out in \cite[Example~7.4]{BlochKatoEtale} if $Y$ is an abelian variety, then we get back the usual definition of ordinary, namely $Y$ is ordinary if and only if there is an isomorphism between $Y(\bar{\ell})[p]$ and $(\Z/p\Z)^{\mathrm{dim}(Y)}$. In general, the defintion can be extended in many different equivalent ways, see for example \cite[Proposition~7.3]{BlochKatoEtale}. In particular, this notion is related to the slopes of the Newton polygon of the $F$-crystal $\h^q_{\mathrm{cris}}(Y/W)$. Roughly speaking, the role played by the Dieudonné module for abelian varieties is replaced by the $F$-crystals $\h^q_{\mathrm{cris}}(Y/W)$, with $q\geq 0$. However, since for the aim of this section we can avoid introducing crystalline cohomology and the de Rham-Witt complex we prefer not to add too many technicalities and use the definition of ordinary variety requiring the least number of new concepts. 

\subsection{Proof of Theorem~\ref{thm: ordinary good reduction}}\label{subsection: proof of good ordinary reduction theorem}
Throughout this section, we assume that $X$ is such that its special fibre $Y$ is smooth, \emph{ordinary} and such that both $\h^0(Y,\Omega^1_Y)$ and $\h^1(\bar{Y},\Z/p\Z)$ are trivial. In this case, the Cartier operator gives a bijection between the global closed $q$-forms and the global $q$-forms on $Y$. In fact, for every $q$ the short exact sequence $0\rightarrow B^q_Y\rightarrow Z^q_Y\xrightarrow{C}\Omega^q_Y\rightarrow 0$ induces a long exact sequence in cohomology
\[
0 \rightarrow \h^0(Y,B^q_Y)\rightarrow \h^0(Y,Z^q_Y)\xrightarrow{C}\h^0(Y,\Omega^2_Y)\rightarrow \h^1(Y,B^q_Y)\rightarrow \dots .
\]
The condition that $Y$ is ordinary assures the vanishing of $\h^0(Y,B^q_Y)$ and $\h^1(Y,B^q_Y)$ and hence the bijectivity of the Cartier operator $C\colon\h^0(Y,Z_Y^q)\rightarrow \h^0(Y,\Omega^q_Y)$.
\begin{lemma}\label{LemmaOrdinaryK3Fil_n<e'}
    Let $n$ be an integer such that $0<n<e'$, with $e'=ep(p-1)^{-1}$. Then for every $\A\in \fil_n \br(X)$ we have
    \[
        \rsw_{n,\pi}(\A)=0.
    \]
\end{lemma}
\begin{proof}
    The assumption $\h^0(Y,\Omega^1_Y)=0$ together with Lemma \ref{cor: on the image of the refined swan conductor 1} assure us that if $p\nmid n$, then $\rsw_{n,\pi}(\A)=0$. Moreover, as already pointed out in Section~\ref{subsection: properties of the image of the refined Swan conductor} if $\A\in \br(X)$ has order coprime to $p$, then $\A\in \fil_0\br(X)$. Therefore we are reduced to the case $\A\in \fil_{n} \br(X)[p^r]$, with $r\geq 1$ and $p\mid n$. We prove the lemma by induction on $r$. If $r=1$ and $\rsw_{n,\pi}(\A)=(\alpha,\beta)$, by Corollary \ref{cor: on the image of the refined swan conductor 2} 
    \[
        (0,0)=\rsw_{n/p,\pi}(\A^{\otimes p})=(C(\alpha),C(\beta)).
    \]
    By Lemma~\ref{lemma: (a) global form is log (b) global form is exact 2} it follows that $(\alpha,\beta)\in \h^0(Y,B^2_Y)\oplus \h^0(Y,B^1_Y)$, which are zero since $Y$ is ordinary. Assume the result to be true for $r-1$ and let $\A\in \br(X)[p^r]$, then again by Corollary \ref{cor: on the image of the refined swan conductor 2} together with the induction hypothesis we have that 
    \[
        (0,0)=\rsw_{n/p,\pi}(\A^{\otimes p})=(C(\alpha),C(\beta))
    \]
    and once again we get that $(\alpha,\beta)\in \h^0(Y,B^2_Y)\oplus \h^0(Y,B^1_Y)$ and therefore $\alpha$ and $\beta$ have to be zero.
\end{proof}

\begin{lemma}
    Assume that $(p-1)\nmid e$; then for every $n\geq 1$ and for every $\A\in \fil_n \br(X)$ we have
    \[
        \rsw_{n,\pi}(\A)=0.
    \]
\end{lemma}
\begin{proof}
    Let $\A\in \fil_n \br(X)[p^r]$. By the previous lemma, we already know that the result is true if $n<e'$ and we know that the result is true if $p\nmid n$, see \ref{cor: on the image of the refined swan conductor 1}. Again, we work by induction on $r$. For $r=1$, we know from Remark~\ref{rmk: Chapter 1, image of p torsion element via refined Swan conductor 1} that if $n> e'$ then $\rsw_{n,\pi}(\A)=(0,0)$. Hence, we can assume $n\leq e'$. Since by assumption $e'$ is not an integer $n\leq e'$ is equivalent to $n<e'$; we can therefore conclude by applying the previous lemma. Let $\A\in \fil_n \br(X)[p^r]$ and $\rsw_{n,\pi}(\A)=(\alpha,\beta)$. By assumption we know that if $n\geq e'$ then $n>e'$. By Corollary \ref{cor: on the image of the refined swan conductor 3} 
    \[
        \rsw_{n-e}(\A^{\otimes p})=(\bar{u}\alpha,\bar{u}\beta)
    \]
    with $\bar{u}\in \ell^\times$ and the result follows from the induction hypothesis. 
\end{proof}
Summarising, we proved the following theorem.
\begin{thm}\label{thm: ordinary over p adic field }
        Assume that $Y$ is an ordinary variety with no global $1$-forms. If $(p-1)\nmid e$, then $\br(X)=\fil_{0}\br(X).$ 
\end{thm}
The assumption $\h^1(\bar{Y},\Z/p \Z)=0$ implies that $\Ev_{-1}\br(X)\{p\}=\fil_0\br(X)\{p\}$, see \cite[Lemma~11.3]{BrightNewton}. Hence, combining this result with Theorem~\ref{thm: ordinary over p adic field } we get the proof of Theorem~\ref{thm: ordinary good reduction}.
\subsection{Existence of Brauer--Manin obstruction over field extensions}\label{subsubsection: On the existence of a Brauer Manin obstruction over a field extension}

In this section, we prove a stronger version of \cite[Theorem~C]{BrightNewton}, under the additional assumption that the crystalline cohomology groups on the special fiber are torsion-free. We show that with this extra assumption it is always possible (after a finite field extension) to find an element of order \emph{exactly} $p$ with non-constant evaluation map.  

%under the additional assumption that $X$ (and hence also $Y$) is a K$3$ surface.
\begin{thm}\label{thm: obstruction after field extension}
    Let $V$ be a smooth proper and geometrically integral variety over a number field $k$ and $\ip$ be a prime of good ordinary reduction for $V$. Assume that 
    \begin{itemize}
        \item the special fibre $\mathcal{V}(\ip)$ at $\ip$ has $\h^0(\mathcal{V}(\ip),\Omega^2_{\mathcal{V}(\ip)})\ne \{0\}$;
        \item the crystalline cohomology groups $\h^i(\mathcal{V}(\ip)/W)$ are torsion free for every $i\geq 0$.
    \end{itemize}
    Then there exists a finite field extension $k'/k$ and an element $\A\in \br(V_{k'})[p]$ that obstructs weak approximation on $V_{k'}$.
\end{thm}
For the proof we first change setting and work over the $p$-adic field $L:=k_{\ip}$. As usual, we denote by $X,\mathcal{X}$ the base change of $V,\mathcal{V}$ to $k_{\ip}$ and $\Os_{\ip}$ respectively. Moreover, we denote by $Y$ the base change of $\mathcal{V}$ to $\ell:=k(\ip)$. 

\vspace{2mm}

Without loss of generality we can assume that $L$ contains a primitive $p$-root of unity. We fix, for every $q$, an isomorphism on $X_\et$ between $\Z/p\Z(1)$ and $\Z/p\Z(q)$. Define 

\[
    {\mathcal{M}}_1^q:={i}^*\text{R}^q{j}_*\left(\Z/p\Z(1)\right)
\]
where ${i}$ and ${j}$ denote the inclusion of the generic and special fibre (${X}$ and ${Y}$, respectively) in ${\mathcal{X}}$, cf. Section \ref{subsection: refined Swan conductor on br(X)}. 
\begin{rmk}
    The main reference for this section is \cite{BlochKatoEtale}. In \cite{BlochKatoEtale} Bloch and Kato link a “twisted” version of the sheaves ${\mathcal{M}}^q_1$ with the sheaf of logarithmic forms on ${Y}$. They work with the sheaves ${M}^q_1:={i}^* R^q {j}_*\left(\Z/p\Z(q)\right)$. However, since for every $q$ we fixed an isomorphism between $\Z/p\Z(1)$ and $\Z/p\Z(q)$, the results apply also to the sheaves ${\mathcal{M}}^q_1$.
\end{rmk}
The sheaves ${\mathcal{M}}^q_1$ fit in the spectral sequence of vanishing cycles \cite[0.2]{BlochKatoEtale}:
\[
    E^{s,t}_2:=\h^s({Y},{\mathcal{M}}^t_1)\Rightarrow \h^{s+t} \left({X},\Z/p\Z(1)\right).
\]
By comparing the spectral sequence of vanishing cycles for $X$ and $K^h$ we get the commutative diagram:
\[
\begin{tikzcd}
    \h^2\left(X, \Z/p\Z(1)\right) \arrow[r,"f"] \arrow[d] & \h^0(Y,\mathcal{M}_1^2) \arrow[d,"g"]\\
    \br(K^h)[p] \arrow[r,"\text{res}"] &\h^0(K^h,\br(K^h_{nr})[p]).
\end{tikzcd}
\]
Here we are using Hilbert's theorem 90 \cite[Theorem~1.3.2]{BGgroupTheleneSkoro} to identify $\br(K^h)[p]=\h^2(K^h,\Z/p\Z(1))$. In \cite[Lemma~3.4]{BrightNewton} it is proven that the map $g$ is injective. The map $f$ is defined as the composition of the projection $\h^2(X,\Z/p\Z(1))\twoheadrightarrow E^{0,2}_\infty$ with the inclusion map $E^{0,2}_\infty \hookrightarrow E^{0,2}_2$. 

The injectivity of $g$ gives a map $f_1\colon \br(X)[p]\rightarrow \h^0(Y,\mathcal{M}_1^2)$ such that the diagram 
\[
\begin{tikzcd}
    \h^2(X,\Z/p\Z(1)) \arrow[d]\arrow[r,"f"] &\h^0(Y,\mathcal{M}_1^2) \\
    \br(X)[p] \arrow[ur,"f_1"']
 \end{tikzcd}
\]
commutes, where the vertical arrow comes from the Kummer exact sequence \cite[Section~1.3.4]{BGgroupTheleneSkoro}. In fact, every element in $\h^2\left(X,\Z/p\Z(1)\right)$ coming from an element of $\Pic(X)$ is sent to $0$ by $f$, since its image in $\br(K^h)[p]$ comes from an element in $\Pic(K^h)$, and the map $\Pic(K^h)\rightarrow \br(K^h)[p]$ is trivial.

By \cite[Proposition~6.1]{Kato} we also know that $\ker({\mathrm{res}})=\fil_0\br(K^h)[p]$, hence we have the commutative diagram
\begin{equation}\label{diagram: vanishing cycles}
\begin{tikzcd}
    \fil_0\br(X)[p]\arrow[r]\arrow[d] &\br(X)[p] \arrow[r,"f_1"] \arrow[d] & \h^0(Y,\mathcal{M}_1^2) \arrow[d,"g"]\\
    \fil_0\br(K^h)[p] \arrow[r] &\br(K^h)[p] \arrow[r,"\text{res}"] &\h^0(K^h,\br(K^h_{nr})[p]).
\end{tikzcd}
\end{equation}
Note that we can build a diagram analogous to~\eqref{diagram: vanishing cycles} for every finite field extension $L'/L$.

We are left to prove the existence (over a field extension $L'$ of $L$) of an element $\A$ in $\br(X_{L'})[p]$ such that $f_{L'}(\A)\ne 0$. In order to do this, we analyse the spectral sequence of vanishing cycles over an algebraic closure $\bar{L}$ of $L$. Let $\Lambda$ be the integral closure of $\Os_L$ in $\bar{L}$ and $\bar{\ell}$ the residue field of $\Lambda$. Let $\bar{X}$, $\bar{\mathcal{X}}$ and $\bar{Y}$ be the base change of $X,\mathcal{X}$ and $Y$ to $\bar{L},\Lambda$ and $\bar{\ell}$ respectively. We define $\bar{\mathcal{M}}_1^q:=\bar{i}^*\text{R}^q\bar{j}_*\left(\Z/p\Z(1)\right)$, where $\bar{i}$ and $\bar{j}$ denote the inclusion of the generic and special fibre ($\bar{X}$ and $\bar{Y}$, respectively) in $\bar{\mathcal{X}}$. These sheaves build a spectral sequence of vanishing cycles: 
\[
    \bar{E}^{s,t}_2:=\h^s(\bar{Y},\bar{\mathcal{M}}^t_1)\Rightarrow \h^{s+t} \left(\bar{X},\Z/p\Z(1)\right).
\]
We are interested in the map $\bar{f}_1\colon\br(\bar{X})[p]\rightarrow \bar{E}^{0,2}_\infty$, defined in the same way as the map $f_1$.

Combining \cite[Theorem~8.1]{BlochKatoEtale} with the short exact sequence~(8.0.1) and Proposition~7.3 of \cite{BlochKatoEtale} we get that if $Y$ is ordinary, then 
\begin{equation}\label{eq: cohomology of M^q_1 ordinary case}
    \h^n(\bar{Y},\bar{\mathcal{M}}^q_1)\simeq \h^n(\bar{Y},\Omega^q_{\bar{Y},\log}) \, \text{ and } \, \h^n(\bar{Y},\Omega^q_{\bar{Y},\log})\otimes_{\F_p} \bar{\ell}\xrightarrow{\sim}\h^n(\bar{Y},\Omega^q_{\bar{Y}}).
\end{equation}
for every $q,n$.

The assumption on the special fibre $Y$ assures us that the spectral sequence of vanishing cycles $\bar{E}_2^{p,q}$ degenerates at the second page, see \cite[Page 307]{BlochEsnault}. Thus, 
\[
    \bar{E}_\infty^{0,2}=\ker(\bar{E}_2^{0,2}\rightarrow \bar{E}_2^{2,1})=\bar{E}_2^{0,2}
\]
    and hence (by construction) the map $\bar{f}_1\colon \br(\bar{X})[p]\rightarrow \h^0(\bar{Y},\bar{\mathcal{M}}_1^2)$ is surjective. Finally, by equation~\eqref{eq: cohomology of M^q_1 ordinary case} the existence of a non-trivial global $2$-form on $Y$ assures that $\h^0(\bar{Y},\bar{\mathcal{M}}_1^2)$ is not trivial. 

\begin{proof}[Proof of Theorem~\ref{thm: obstruction after field extension}]
Without loss of generality we can assume that $k$ contains a primitive $p$-root of unity $\zeta$. Let $\ip$ be a prime of good ordinary reduction for $V$ of residue characteristic $p$. Let $L$ be the $p$-adic field $k_{\ip}$ and $X$ be the base change of $V$ to $L$. Let $\A\in \br(\bar{X})[p]$ be such that $\bar{f}_1(\A)\ne 0$, then there is a finite field extension $L'/L$  such that $\A$ is defined over $L'$ (i.e. $\A\in \br(X_{L'})[p]$) and $f_{1,L'}(\A)\ne 0$. Namely, by what we have said above $\A\notin \fil_0 \br(X_{L'})[p]$.

Since $\br(\bar{V})[p]\simeq \br({V\times_{k} \overline{k_{\ip}}})[p]$ there exists $\A\in \br(\bar{V})$ such that $\bar{f}_1(\A)\ne 0$. Hence, if $k'/k$ is a finite field extension such that $\A$ is defined over $k'$, we know from what we have shown above that there is a prime $\ip'$ above $\ip$ such that $\ev_\A$ is non-constant on $X_{k'}(k'_{\ip'})$.
\end{proof}

\section{Ordinary case: examples}\label{section: ordinary case examples}
In this section we give examples on K$3$ surfaces defined over number fields, for which primes of good reduction play a role in the Brauer--Manin obstruction. We prove that if $X$ is a Kummer K$3$ surface coming from the product of elliptic curves defined over $\Q$ with good ordinary reduction at the prime $2$ and full $2$-torsion defined over $\Q_2$, then $\br(X)[2]=\Ev_{-1}\br(X)[2]$ (cf. Theorem~\ref{thm: Kummer}). This theorem proves what was already predicted by Ieronymou after some computational evidence, see \cite[Remark~2.6]{ieronymou2023odd}.
\subsection{The case of K3 surfaces}\label{section: the case of K3 surfaces}
For  a smooth variety, we write $\omega_X$ for the canonical sheaf of $X$. 
\begin{defi}
    An {algebraic} K$3$ surface is a smooth projective $2$-dimensional variety over a field $k$ such that $\omega_X\simeq \Os_X$ and $\h^1(X,\Os_X)=0$.\footnote{Here $k$ can be any field.}
\end{defi}

More precisely we work with two kinds of K$3$ surfaces (see \cite[Section~1.1]{Huybrechts} for more details):
\begin{enumerate}
    \item $X$ smooth quartic surface in $\p^3_k$;
    \item $X=\mathrm{Kum}(A)$, where $A$ is an abelian surface over a field of characteristic different from $2$ and $X$ is obtained by resolving singularities from the quotient $A/\iota$, with $\iota \colon A\rightarrow A$ involution given by $x\mapsto -x$.  
\end{enumerate}
The special fibre of a K$3$ surface with good reduction is still a K$3$ surface, see for example \cite[Remark~11.5]{BrightNewton}. We start by stating the following well known result.
\begin{lemma}\label{lemma: criterium ordinary k3}
    Let $p$ be a prime number and $Y$ a K$3$ surface over the finite field $\F_{p^n}$ for some non-negative $n$. Then $Y$ is ordinary if and only if $|Y(\F_{p^n})| \not \equiv 1 \mod p$.
\end{lemma}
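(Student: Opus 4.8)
The plan is to combine the Lefschetz trace formula with the Newton--Hodge comparison for the second cohomology of $Y$. Write $q=p^n$, let $\bar{Y}=Y\times_{\F_q}\bar{\F}_q$, and let $\beta_1,\dots,\beta_{22}$ be the eigenvalues of the $q$-power Frobenius acting on $\h^2_{\et}(\bar{Y},\Q_\ell)$ (recall that a K$3$ surface has $b_2=22$ and $\h^1=\h^3=0$). Since Frobenius acts as $1$ on $\h^0$ and as $q^2$ on $\h^4$, the Lefschetz trace formula gives
\[
|Y(\F_q)|=1+\sum_{i=1}^{22}\beta_i+q^2 .
\]
First I would reduce this identity modulo $p$: as $q^2\equiv 0$, everything comes down to controlling the rational integer $\sum_i\beta_i$ through the $p$-adic valuations of the $\beta_i$.

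Next I would record the slopes. Fix an embedding $\bar{\Q}\hookrightarrow \bar{\Q}_p$ and normalise $v_p(p)=1$, so $v_p(q)=n$; the numbers $s_i:=v_p(\beta_i)/n\in[0,2]$ form the Newton polygon of the second crystalline cohomology of $Y$. Three standard inputs pin it down: Poincaré duality forces the slopes to be symmetric about $1$, so the slope-$0$ and slope-$2$ parts share a common multiplicity $m_0$; Mazur's theorem forces the Newton polygon to lie on or above the Hodge polygon, whose slope-$0$ run has length $h^{2,0}=1$; and Newton polygons are convex. Comparing the two polygons at the abscissa $x=m_0$ shows $m_0\le 1$, and if $m_0=1$ then convexity forces the middle twenty slopes to equal the Hodge slope $1$ identically (a convex arc with the prescribed endpoints lying above the slope-$1$ chord must coincide with that chord). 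Hence the Newton polygon is either $\{0,1^{20},2\}$, which is precisely the ordinary (Hodge) polygon, or it has $m_0=0$, i.e. all slopes strictly positive. This yields the clean dichotomy: $Y$ is ordinary if and only if some $\beta_i$ is a $p$-adic unit.

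Finally I would run the congruence in each case. If $Y$ is non-ordinary then $v_p(\beta_i)>0$ for every $i$, hence $v_p\!\left(\sum_i\beta_i\right)>0$; being a rational integer, $\sum_i\beta_i$ is then divisible by $p$, so $|Y(\F_q)|\equiv 1\pmod p$. If $Y$ is ordinary then exactly one eigenvalue, say $\beta_1$, is a unit, while the remaining twenty-one satisfy $v_p(\beta_i)\ge n\ge 1$; reducing modulo the maximal ideal gives $\sum_i\beta_i\equiv \beta_1\not\equiv 0$, and since $\sum_i\beta_i\in\Z$ this forces $p\nmid \sum_i\beta_i$, whence $|Y(\F_q)|\equiv 1+\overline{\beta_1}\not\equiv 1\pmod p$.

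I expect the main obstacle to be the middle step: justifying that ``ordinary'' --- Newton equal to Hodge, equivalently the formal Brauer group $\hat{\br}(Y)$ having height $1$, equivalently Frobenius acting bijectively on $\h^2(Y,\Os_Y)$ --- is equivalent to the mere presence of a slope-$0$ eigenvalue, and in particular that a non-ordinary K$3$ carries no unit-root part in $\h^2$. The argument above secures this from Mazur's inequality, Poincaré duality and convexity alone, without invoking the full Artin classification of K$3$ Newton polygons; making the convexity comparison airtight is the one point that deserves care.
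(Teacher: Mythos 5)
Your proof is correct, and it diverges from the paper's at exactly the decisive step. Both arguments start from the same Lefschetz identity $|Y(\F_{p^n})|=1+\sum_{i=1}^{22}\lambda_i+p^{2n}$; but where the paper then simply invokes Bogomolov--Zarhin's Lemma~1.1 (a K$3$ surface is ordinary if and only if $p\nmid\sum_i\lambda_i$) and stops, you reprove that criterion from scratch via the Newton/Hodge polygon formalism: Poincar\'e duality makes the slopes of $\h^2$ symmetric about $1$, Mazur's inequality caps the slope-zero multiplicity $m_0$ at $1$, and if $m_0=1$ the middle of the Newton polygon is squeezed between the slope-one chord (from below, by convexity of the polygon) and the Hodge polygon (from above, by Mazur), forcing Newton $=$ Hodge. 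That squeeze is exactly right, so the point you flagged as delicate does hold: a convex arc lies on or below its chord, Mazur puts it on or above, hence equality. The trade-off between the two approaches: yours is self-contained at the level of K$3$-specific facts and isolates clearly what makes the dichotomy work (no unit-root part at all in the non-ordinary case), at the price of leaning on more general machinery --- Katz--Messing to identify the $p$-adic valuations of the $\ell$-adic eigenvalues with crystalline slopes (you use this silently when you call the numbers $v_p(\beta_i)/n$ a Newton polygon), Mazur's theorem (applicable here since K$3$ crystalline cohomology is torsion-free and Hodge--de Rham degenerates), and the Bloch--Kato/Artin--Mazur equivalence of the paper's definition of ordinarity ($\h^i(Y,B^q_Y)=0$ for all $i,q$) with ``Newton equals Hodge''. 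The paper's route buys brevity by outsourcing all of this to a single citation, which plays in its proof precisely the role your polygon argument plays in yours.
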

\begin{proof}
    The proof is an almost immediate consequence of \cite[Section~1]{BogomolovZarhin}\footnote{For more details see for example \cite{PhDthesis}}.
\end{proof}

Given a K$3$ surface $X$ defined over a local field $L$ of residue characteristic $p$ and such that $(p-1)\mid e$, we are looking for elements in $\br(X)$ such that the corresponding evaluation map is non-constant. Since for K$3$ surfaces $\h^1(\bar{Y},\Z/p\Z)=0$, then by \cite[Lemma~11.3]{BrightNewton} $\fil_0\br(X)=\Ev_0\br(X)=\Ev_{-1}\br(X)$. This means that we are looking for $\A\notin \Ev_0\br(X)$, namely such that there is $n\geq 1$ with $\A\in \fil_n \br(X)$ and $\mathrm{rsw}_{n,\pi}(\A)\ne (0,0)$. 

\begin{lemma}\label{lemma: explicit global 2-form}
    Let $\ell$ be a field and $f(x_0,x_1,x_2,x_3)\in \ell[x_0,x_1,x_2,x_3]$ be a homogeneous polynomial of degree $4$. Assume that the corresponding projective variety $Y$ is smooth. Then $Y$ is a K$3$ surface and the $1$-dimensional $\ell$-vector space of global $2$-forms is generated by the $2$-form
    \[
        \omega=\frac{d\left(\frac{x_1}{x_0}\right)\wedge d\left(\frac{x_2}{x_0}\right)}{\frac{1}{x_0^3}\cdot \frac{\partial f}{\partial x_3}}.
    \]
\end{lemma}
\begin{proof}
    This result is most likely well know. Since we are not able to find a reference in the leterature, we refer to the author PhD thesis were the details of the proof have been included, \cite[Lemma~4.3.3]{PhDthesis}.
\end{proof}

%\begin{lemma}\label{lemma: transcendental nature of A}
    %Let $X$ be a K$3$ surface and $\A\in \br(X)$ be such that $\A\notin \fil_0\br(X)$. Then $\A \notin \br_1(X)$.
%\end{lemma}
%\begin{proof}
  %  In \cite[Proposition~2.3]{colliotskogoodred} Colliot-Thélène and Skorobogatov prove that for every element in the algebraic Brauer group the associated evaluation map at a prime with good reduction has to be constant.
%\end{proof}

We start by recalling the following example, which is the central result of \cite{Pagano}.
\begin{example}[\cite{Pagano}]\label{example: BM obstruction over Q_2}
Let ${V}\subseteq \p^3_{\Q}$ be the projective K$3$ surface defined by the equation 
\begin{equation}\label{eq}
    x^3y+y^3z+z^3w+w^3x+xyzw=0.
\end{equation}
Then $V$ has good ordinary reduction at $2$ and
the class of the quaternion algebra 
\[
    \A=\left(\frac{z^3+w^2x+xyz}{x^3},-\frac{z}{x}\right)\in \br \, \Q(V)
\]
defines an element in $\br(V)$. The evaluation map $\ev_\A \colon {V}(\Q_2)\rightarrow \br(\Q_2)$ is non-constant, and therefore gives an obstruction to weak approximation on $X$. 

\vspace{2mm}

In this case, $\h^0(Y, \Omega^2_Y)$ is a one-dimensional $\F_2$ vector space. Let $\omega$ be the only non-trivial element, then $C(\omega)=0$ or $C(\omega)=\omega$. However, $Y$ being ordinary implies that $\h^0(Y,B^2_Y)=0$, hence by Lemma~\ref{lemma: (a) global form is log (b) global form is exact} and Corollary~\ref{cor: Exact Forms in terms of Cartier Op} $C(\omega)=\omega$ and $\h^0(Y,\Omega^2_{Y,\log})$ is a one-dimensional $\F_2$-vector space. From Lemma~\ref{lemma: explicit global 2-form} we get that the non-zero global logarithmic $2$-form $\omega$ can be written (locally) as: 
\[
    \omega=\frac{d\left(\frac{z^3+w^2x+xyz}{x^3}\right)}{\left(\frac{z^3+w^2x+xyz}{x^3}\right)}\wedge\frac{d\left(\frac{z}{x}\right)}{\left(\frac{z}{x}\right)}.
\]
If we denote by $f$ and $g$ the functions $\frac{z^3+w^2x+xyz}{x^3}$ and $\frac{z}{x}$ seen as elements in the function field $F$ of $Y$, then we see that the two functions appearing in the definition of $\A$ are lifts to characteristic $0$ of $f$ and $g$, and hence from Proposition~\ref{propGradedPieces}
\[
\rho_0(\omega,0)=\left[\left\{ \frac{z^3+w^2x+xyz}{x^3},-\frac{z}{x}\right\}\right]\in \mathrm{gr}^0.
\]
Using Proposition~\ref{prop: rsw and map rho_m}, 
\[
    \rsw_{2,\pi}(\A)=(\omega,0)\ne (0,0)
\]
 and $\A\notin \fil_1\br(X)[2]\supseteq \Ev_{-1}\br(X)[2]$. 
\end{example}
\subsection{Kummer K3 surfaces over 2-adic fields}

Let $L$ be a $2$-adic field. The recent paper  \cite{LazdaSkoro} of Lazda and Skorobogatov gives a way to detect whether the Kummer K$3$ surface attached to an abelian variety $A/L$ with good (non supersingular) reduction has good (non supersingular) reduction \footnote{This was already known for K$3$ surfaces over $p$-adic fields with $p\ne 2$, for the supersingular case in characteristic $2$, see \cite{matsumoto2023supersingular}.}. In \cite{SkoroZarhin} Skorobogatov and Zarhin link the transcendental part of the Brauer group of a Kummer K$3$ surface to the one of the underlying abelian variety (cf. Section~\ref{subsubsection: Kummer K3 generalities}). Combining these results makes it possible to construct examples of K$3$ surfaces with good reduction at the prime $2$ for which we are able to study the Brauer group.

\vspace{2mm}

In this section, we prove that for every pair of elliptic curves $E_1,E_2$ over $\Q$ with good ordinary reduction at $p=2$ and full $2$-torsion defined over $\Q_2$, the $2$-torsion elements in the Brauer group of the corresponding Kummer K$3$ surface do not play a role in the Brauer--Manin obstruction to weak approximation. In particular, this shows that the field extension in Theorem \ref{thm: obstruction after field extension} is needed. We then use these computations to exhibit an example of a K$3$ surface $X$ over $\Q_2$ with good ordinary reduction and such that $\br(X)=\Ev_{-1}\br(X)$, showing that the converse of Theorem \ref{intro_thm: ordinary good reduction} does not hold in general.   

\subsubsection{Brauer group of Kummer K3 surfaces}\label{subsubsection: Kummer K3 generalities}
Let $A$ be an abelian surface over a field $k$ of characteristic different from $2$ and $V=\mathrm{Kum}(A)$ the corresponding Kummer surface. Skorobogatov and Zarhin \cite{SkoroZarhin} prove that there is a well-defined map
\[
\pi^*\colon \br(V)\rightarrow \br(A)
\]
that induces an injection of $\br(V)/\br_1(V)$ into $\br(A)/\br_1(A)$. They also prove that this injection is an isomorphism on the $p$-torsion for all odd primes $p$, see \cite[Theorem 2.4]{SkoroZarhin}. 
We say that an element $\A\in \br(A)$ \emph{descends} to $\br(V)$ if there exists $\mathcal{C}\in \br(V)$ such that $\pi^*(\mathcal{C})=\A$. 
\begin{lemma}\label{lemma: pi^*(Ev_-1br(V)) is in Ev_-1br(A)}
    Let $V=\mathrm{Kum}(A)$, $\mathcal{C}\in \br(V)$ and $\mathcal{B}:=\pi^*(\mathcal{C})\in \br(A)$. Let $\ip$ be a prime in $\Os_{k}$; if the image of  $\mathcal{C}$ in $\br(V_{\ip})$ lies in $ \Ev_{-1}\br(V_{\ip})$ then the image of $\mathcal{B}$ in $\br(A_{\ip})$ lies in $ \Ev_{-1}\br(A_{\ip})$.
\end{lemma}
\begin{proof}
    The result follows from the definition of $\Ev_{-1}$ and the fact that for any finite field extension $M/k_{\ip}$ and $P\in A(M)$ we have 
    \[
        \ev_{\mathcal{B}}(P)=\ev_{\pi^*(\mathcal{C})}(P)=\ev_{\mathcal{C}}(\pi(P)).
    \]
\end{proof}
In \cite{SkoroZarhin} Skorobogatov and Zarhin show that given two elliptic curves $E_1$ and $E_2$ with Weierstrass equations
\[
E_1: \, v_1^2=u_1\cdot (u_1-\gamma_{1,1})\cdot(u_1-\gamma_{1,2}), \quad E_2: \,v_2^2=u_2\cdot (u_2-\gamma_{2,1}) \cdot (u_2-\gamma_{2,2})
\]
the quotient $\br(E_1\times E_2)[2]/\br_1 (E_1\times E_2)[2]$ is generated by the classes of the four Azumaya algebras 
\[
    \mathcal{A}_{\epsilon_1,\epsilon_2}=((u_1-\epsilon_1)(u_1-\gamma_{1,2}),(u_2-\epsilon_2)(u_2-\gamma_{2,2})) \; 
    \text{ with } \; \epsilon_i\in \{0,\gamma_{i,1}\}. 
\]
Finally, if $M$ is the matrix
    \[
    M=
    \begin{pmatrix}
    1 &\gamma_{1,1}\cdot \gamma_{1,2} &\gamma_{2,1}\cdot \gamma_{2,2} &-\gamma_{1,1}\cdot \gamma_{2,1}\\
    \gamma_{1,1}\cdot \gamma_{1,2} &1 &\gamma_{1,1}\cdot \gamma_{2,1} &\gamma_{2,1}\cdot (\gamma_{2,1}-\gamma_{2,2}) \\
    \gamma_{2,1}\cdot \gamma_{2,2} &\gamma_{1,1}\cdot \gamma_{2,1} &1 &\gamma_{1,1}\cdot(\gamma_{1,1}-\gamma_{1,2})\\
    -\gamma_{1,1}\cdot \gamma_{2,1} &\gamma_{2,1}\cdot (\gamma_{2,1}-\gamma_{2,2}) &\gamma_{1,1}\cdot(\gamma_{1,1}-\gamma_{1,2}) &1
    \end{pmatrix}
    \]
    then by \cite[Lemma 3.6]{SkoroZarhin}:
    \begin{enumerate}
        \item $\A_{\gamma_{1,1},\gamma_{2,1}}$ descends to $\br(V)$ if and only if the entries of the first row of $M$ are all squares;
        \item $\A_{\gamma_{1,1},0}$ descends to $\br(V)$ if and only if the entries of the second row of $M$ are all squares;
        \item $\A_{0,\gamma_{2,1}}$ descends to $\br(V)$ if and only if the entries of the third row of $M$ are all squares;
        \item $\A_{\text{\tiny 0},\text{\tiny 0}}$ descends to $\br(V)$ if and only if the entries of the last row of $M$ are all squares.
    \end{enumerate}

\subsubsection{Product of elliptic curves with good reduction at $2$ and full $2$-torsion}
In order to use the results just summarised we need to analyse what the $2$-torsion points of an elliptic curve with good ordinary reduction at $2$ look like. Let $E/\Q$ be an elliptic curve having good reduction at $2$ and such that the $2$-torsion of $E$ is defined over $\Q_2$, i.e. $E(\Q_2)[2]=E(\bar{\Q}_2)[2]$. Without loss of generality we can assume $E/\Q$ to be defined by the minimal Weierstrass equation\footnote{If $y^2+a_1xy+a_3y=x^3+a_2x^2+a_4 x+a_6$ is a minimal Weierstrass equation over $\Z_2$, then $a_1\in \Z_2^\times$ (since the reduction modulo $2$ is ordinary). The change of variables $(x,y)\mapsto (a_1^2x'-a_3/a_1, a_1^3y'+(1-a_1^5)/(2a_1^4)x')$ gives another minimal Weierstrass equation of the form $y^2+xy=x^3+ax^2+bx+c$.}
\begin{equation}\label{eqOrdinaryEllipticCurve}
    y^2+xy= x^3+ax^2+bx+c
\end{equation}
with $a,b,c\in \Z_2$. Let $\alpha_i,\beta_i\in \Q_2$ be such that 
\[
    E(\Q_2)[2]=\{\mathcal{O},(\alpha_1,\beta_1),(\alpha_2,\beta_2),(\alpha_3,\beta_3)\},
\] 
with $\mathcal{O}$ the point at infinity of $E$. 
\begin{lemma}\label{lemma:2adicValOf2TorsionPoint}
    Assume that $\beta_1,\beta_2,\beta_3$ are ordered as
    \[
    \mathrm{ord}_2(\beta_1)\leq \mathrm{ord}_2(\beta_2) \leq \mathrm{ord}_2(\beta_3).
    \]
    Then $\mathrm{ord}_2(\alpha_1)=-2$ and $\alpha_2,\alpha_3\in \Z_2$. 
\end{lemma}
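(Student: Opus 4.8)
The plan is to reduce the statement to the arithmetic of the $2$-division polynomial of $E$ and to read off the $2$-adic valuations of its roots. Because the given model has $a_1=1$ and $a_3=\delta$, the inversion $P\mapsto -P$ sends $(x,y)$ to $(x,-y-x-\delta)$, so a point is nonzero $2$-torsion exactly when $2y+x+\delta=0$, i.e.\ $y=-(x+\delta)/2$. Substituting this into~\eqref{eqOrdinaryEllipticCurve}, the left-hand side simplifies to $-(x+\delta)^2/4$, so the $2$-torsion $x$-coordinates are precisely the roots of
\[
f(x):=4(x^3+ax^2+bx+c)+(x+\delta)^2=4x^3+(4a+1)x^2+(4b+2\delta)x+(4c+\delta^2).
\]
Since $E[2]\subseteq E(\Q_2)$ the three roots $\alpha_1,\alpha_2,\alpha_3$ of $f$ lie in $\Q_2$, and good reduction guarantees $E[2]$ is étale of order $4$, so the roots are distinct. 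The first task is therefore to compute $w_i:=\mathrm{ord}_2(\alpha_i)$.

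I would obtain the $w_i$ from the Newton polygon of $f$ together with Vieta's relations, treating the two values of $\delta$ separately because the lower-order coefficients behave differently. For $\delta=1$ one has $\mathrm{ord}_2(4a+1)=0$, $\mathrm{ord}_2(4b+2)=1$ and $\mathrm{ord}_2(4c+1)=0$, so the Newton polygon of $f$ has vertices $(0,0),(2,0),(3,2)$; its slopes $0$ and $2$ show that two roots are units and one has valuation $-2$. For $\delta=0$ the cleanest route is Vieta: $\alpha_1+\alpha_2+\alpha_3=-(4a+1)/4$ has valuation $-2$, while $\sum_{i<j}\alpha_i\alpha_j=b$ and $\alpha_1\alpha_2\alpha_3=-c$ have non-negative valuation. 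From $w_1+w_2+w_3=\mathrm{ord}_2(c)\ge 0$ and a short case analysis one checks that exactly one root has valuation $-2$ and the other two have valuation $\ge 0$. The only genuinely technical point is excluding that the minimal valuation is strictly below $-2$, or is attained more than once: if two roots had valuation $-2$ the second symmetric function would acquire a term of valuation $-4$, contradicting $\mathrm{ord}_2(b)\ge 0$, and a triple coincidence contradicts $\mathrm{ord}_2(c)\ge 0$; this symmetric-function bookkeeping is the main obstacle. Conceptually, the unique root of valuation $-2$ is the $x$-coordinate of the unique nonzero $2$-torsion point lying in the kernel of reduction $E_1(\Q_2)$, the other two reducing to the nonzero $2$-torsion of the (necessarily ordinary) reduction; this is a useful sanity check, but the argument above is self-contained.

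Finally I would translate back to the $\beta_i$. From $y=-(x+\delta)/2$ we have $\mathrm{ord}_2(\beta_i)=\mathrm{ord}_2(\alpha_i+\delta)-1$. For the unique root with $w_i=-2$ we get $\mathrm{ord}_2(\alpha_i+\delta)=-2$, hence $\mathrm{ord}_2(\beta_i)=-3$; for the two roots with $w_i\ge 0$ we get $\mathrm{ord}_2(\alpha_i+\delta)\ge 0$ (indeed $\ge 1$ when $\delta=1$, since a unit root is $\equiv 1\bmod 2$), hence $\mathrm{ord}_2(\beta_i)\ge -1$. In every case the $\beta$-valuation of the valuation-$(-2)$ point, namely $-3$, is strictly smaller than that of the other two points. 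Thus under the ordering $\mathrm{ord}_2(\beta_1)\le\mathrm{ord}_2(\beta_2)\le\mathrm{ord}_2(\beta_3)$ the first point is exactly the valuation-$(-2)$ one, giving $\mathrm{ord}_2(\alpha_1)=-2$, while $\alpha_2,\alpha_3$ are the two roots of valuation $\ge 0$ and therefore lie in $\Z_2$, as claimed.
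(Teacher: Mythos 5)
Your argument is correct, and it takes a genuinely more self-contained route than the paper's. The paper eliminates $x$ rather than $y$: writing $\alpha_i=-2\beta_i-\delta$, it substitutes into the Weierstrass equation to obtain a cubic in $y$ whose roots are the $\beta_i$, and reads off valuations from Vieta's relations for that cubic; the crucial step $\mathrm{ord}_2(\beta_1)=-3$ is obtained by combining the first symmetric function (which only gives $\mathrm{ord}_2(\beta_1)\le -3$) with the integrality theorem \cite[Theorem~VIII.7.1]{Silverman}, which constrains the negative valuations available to coordinates of points of $E(\Q_2)$. You instead analyse the cubic $f(x)$ in the $x$-coordinates, pin down the root valuations by the Newton polygon (for $\delta=1$) and a symmetric-function case analysis (for $\delta=0$), and then transfer back via $\mathrm{ord}_2(\beta_i)=\mathrm{ord}_2(\alpha_i+\delta)-1$; the concluding index-matching step ($-3$ versus $\ge -1$) is an extra step your route needs and the paper's does not, since the paper works with the $\beta_i$ throughout. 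What your approach buys is that no external theorem is invoked: the Newton polygon simultaneously excludes valuations below $-2$ and a repeated minimal valuation, facts which the paper obtains by citation (and rather tersely, since Silverman's theorem by itself only restricts $\mathrm{ord}_2(\beta_1)$ to the set $\{-3,-6,-9,\dots\}$). One point to tighten in your $\delta=0$ case: a term of valuation $-4$ in the second symmetric function is not by itself a contradiction, since it could be cancelled by the other two terms; but cancellation forces the third root to have valuation $\le -2$ as well, which is exactly your ``triple coincidence'' and is then killed by $\mathrm{ord}_2(c)\ge 0$. So your two exclusion cases should be chained rather than stated in parallel --- a presentational fix, not a gap.
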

\begin{proof}
    The $2$-torsion points on $E$ can be computed through the $2$-division polynomial of $E$, which is $\psi_2(x,y)=2y+x$.
In particular, $\alpha_i=-2\beta_i$ with $\beta_i$ solution of 
\begin{equation*}
    \Phi(y):=y^2-2y^2-((-2y)^3+a(-2y)^2+b(-2y)+c).
\end{equation*}
The polynomial $\Phi(y)$ can be rewritten as 
%\[
%y^2-2y^2-\delta y+\delta y + 8y^3+12\delta y^2+6\delta^2 y-4ay^2-4a\delta y-a\delta^2 +2by+b\delta -c
%\]
\begin{equation}\label{eq: polynomial phi(y)}
    \Phi(y)=8y^3-(1+4a)y^2+2by-c.
\end{equation}
 Looking at the coefficients of $\Phi(y)$ we get that 
\[
\begin{cases}
    \mathrm{ord}_2(\beta_1+\beta_2+\beta_3)=\mathrm{ord}_2 (1 +4a)-\mathrm{ord}_2(8)\\
    \mathrm{ord}_2(\beta_1\beta_2+\beta_1\beta_3+\beta_2\beta_3)=\mathrm{ord}_2(2b)-\mathrm{ord}_2(8)\\
    \mathrm{ord}_2(\beta_1\beta_2\beta_3)=\mathrm{ord}_2(c)-\mathrm{ord}_2(8).
\end{cases}
\]
From the first equation, we get 
$\mathrm{ord}_2(\beta_1)\leq -3$
that combined with \cite[Theorem~VIII.7.1]{Silverman} tells us that $\mathrm{ord}_2(\beta_1)=-3$. From the third equation, we get that
$\mathrm{ord}_2(\beta_2)+\mathrm{ord}_2(\beta_3)\geq 0.$
If $\mathrm{ord}_2(\beta_2)=\mathrm{ord}_2(\beta_3)$ then we immediately get that both valuations are non negative. Otherwise, $\mathrm{ord}_2(\beta_2)<\mathrm{ord}_2(\beta_3)$ and from the second equation we get $\mathrm{ord}_2(\beta_2)\geq 1$, which implies that also in this case both $\beta_2$ and $\beta_3$ have non negative $2$-adic valuation. The result now follows from the fact that $\alpha_i=-2\beta_i$. 
\end{proof}

\begin{lemma}\label{lemma:changeOfVarElliptic}
    The change of variables given by 
\begin{equation}\label{eqChangeVar}
    \begin{cases}
        u=4x-4\alpha_1\\
        v=4(2y+x)
    \end{cases}
\end{equation}
induces an isomorphism between $E$ and the elliptic curve given by the equation
    \begin{equation}
        v^2=u(u-\gamma_1)(u-\gamma_2)
    \end{equation}
    where $\gamma_1=4\cdot (\alpha_2-\alpha_1)$ and $\gamma_2=4\cdot(\alpha_3-\alpha_1)$.
\end{lemma}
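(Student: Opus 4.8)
The plan is to reduce the given Weierstrass equation to the short form $v^2=u(u-\gamma_1)(u-\gamma_2)$ by completing the square and then rescaling, exploiting that the linear form $2y+x+\delta$ appearing in $v$ is exactly the $2$-division polynomial $\psi_2$ used in Lemma~\ref{lemma:2adicValOf2TorsionPoint}. Since $\psi_2$ vanishes precisely at the three $2$-torsion points $(\alpha_i,\beta_i)$, the cubic we produce will automatically have the $\alpha_i$ among its roots, which is what pins down $\gamma_1$ and $\gamma_2$.

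First I would complete the square. Writing $v_0:=2y+x+\delta$ and substituting~\eqref{eqOrdinaryEllipticCurve} gives
\[
    v_0^2 = 4(y^2+xy+\delta y)+(x+\delta)^2 = 4x^3+(4a+1)x^2+(4b+2\delta)x+(4c+\delta^2),
\]
so $(x,y)\mapsto(x,v_0)$ identifies $E$ with the curve $v_0^2=g(x)$ for this cubic $g$ of leading coefficient $4$. To turn $g$ into a monic cubic I would clear denominators by scaling: multiplying by $16$ and setting $U:=4x$, $V:=4v_0=v$ yields
\[
    V^2 = U^3+(4a+1)U^2+4(4b+2\delta)U+16(4c+\delta^2).
\]

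Then I would identify the roots of this monic cubic. At each $2$-torsion point one has $\psi_2(\alpha_i,\beta_i)=2\beta_i+\alpha_i+\delta=0$, hence $V=0$, so $U=4\alpha_i$ is a root; as $E$ is smooth the $\alpha_i$ are pairwise distinct, so the cubic factors as $(U-4\alpha_1)(U-4\alpha_2)(U-4\alpha_3)$. Translating by $u:=U-4\alpha_1=4x-4\alpha_1$ (and keeping $v=V$) moves the first root to the origin and turns the remaining factors into $u-4(\alpha_2-\alpha_1)=u-\gamma_1$ and $u-4(\alpha_3-\alpha_1)=u-\gamma_2$, giving the desired equation. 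Finally, to see this is an isomorphism of elliptic curves rather than a mere birational map, I would note that the inverse substitution $x=\tfrac14 u+\alpha_1$, $y=\tfrac18 v-\tfrac18 u-\tfrac{\alpha_1+\delta}{2}$ is of the admissible shape $x=s^2x'+r$, $y=s^3y'+s^2tx'+w$ with $s=\tfrac12\neq0$.

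I expect the only delicate point to be the bookkeeping of the scaling constants: the multiplier must be chosen (here $16$, i.e.\ $U=4x$ and $V=4v_0$) so that the cubic becomes monic \emph{and} the factor $4$ reappears correctly in $\gamma_1=4(\alpha_2-\alpha_1)$ and $\gamma_2=4(\alpha_3-\alpha_1)$; once the identification of $v_0$ with $\psi_2$ is in place, everything else is a direct verification.
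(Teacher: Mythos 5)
Your proof is correct and takes essentially the same route as the paper's: both exploit that the linear form $2y+x+\delta$ is the $2$-division polynomial of $E$, pass through the intermediate curve obtained by the scaling $(u_1,v_1)=(4x,\,4(2y+x+\delta))$, and then translate by $4\alpha_1$ to place a root at the origin. (Incidentally, your constant term $16(4c+\delta^2)=64c+16\delta^2$ is the correct one; the paper's displayed intermediate equation, with constant $16c+16\delta^2$, contains a typo, which affects neither argument.)
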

\begin{proof}
    The change of variables 
    \begin{equation*}
        \begin{cases}
            u_1=4x\\
            v_1=4(2y+x)
        \end{cases}
    \end{equation*}
    sends the elliptic curve given by the equation 
    \begin{equation}\label{eqIntermediate}
        v_1^2=u_1^3 + (4a+1) u_1^2 + 16 c
    \end{equation}
    to the elliptic curve given by equation~\eqref{eqOrdinaryEllipticCurve}.
    %In fact, 
    %\[
    %    [4(2y+x+\delta)]^2=(4x)^3 + (4a+1) (4x)^2 + (16b+8\delta) (4x) + 16 c+16\delta^2=0
    %\]
    %namely, 
    %\[
        %4y^2+x^2+\delta^2+4xy+4y\delta+2x\delta=4x^3+4ax^2+x^2+4bx+2\delta x +16 c-\delta^2
    %\]
    %therefore
    %\[
    %y^2+xy+\delta y=x^3+ax^2+bx+c.
    %\]
    Moreover, the $2$-division polynomial of $E$ is given by $2y+x$. Hence the non-trivial $2$-torsion points on the elliptic curve given by equation~\eqref{eqIntermediate} are sent to non-trivial $2$-torsion points on $E$. The extra translation $u=u_1-4\alpha_1$ and $v=v_1$ gives the desired equation. 
\end{proof}
Let $E_1$ and $E_2$ be two elliptic curves with equations of the form~\eqref{eqOrdinaryEllipticCurve}. We denote by $(a_i,c_i)$ the parameters that determine the equation attached to $E_i$, by $(\alpha_{i,j},\beta_{i,j})$, $j\in \{1,2,3\}$ the non-trivial $2$-torsion points of $E_i$ and by $A$ the abelian surface given by the product of $E_1$ with $E_2$. We denote by $\langle \Ev_{-1}\br(A)[2], \br_1(A)[2] \rangle$ the subgroup of $\br(A)[2]$ generated by $\Ev_{-1}\br(A)[2]$ and $\br_1(A)[2]$, where $\br_1(A)[2]$ is the algebraic Brauer group of $A$.

\begin{lemma}\label{lemma: A_epsilon_1,epsilon_2 in Ev_-1}
    Assume that $\epsilon_1$ and $\epsilon_2$ are as in Section \ref{subsubsection: Kummer K3 generalities}; then the class of the quaternion algebra $\mathcal{A}_{\epsilon_1,\epsilon_2}$ lies in $\langle \Ev_{-1}\br(A)[2], \br_1(A)[2] \rangle$ if and only if at least one between $\epsilon_1$ and $\epsilon_2$ is different from $0$.
\end{lemma}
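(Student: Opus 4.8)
The plan is to pull the class $\mathcal{A}_{\epsilon_1,\epsilon_2}$ back to $\br(K^h)[2]$ and locate it in the filtration $\fil_\bullet$, reading off its residue through Propositions~\ref{prop: explicit description of fil_{e'-m}} and~\ref{propGradedPieces} and Definition~\ref{defi: residue map on fields K,F}. We work at the prime $2$, so $L=\Q_2$, $e=1$, $e'=2$, and $K^h$ has residue field $F=\ell(\bar E_1\times\bar E_2)$. In the coordinates of Lemma~\ref{lemma:changeOfVarElliptic} the slot $(u_i-\epsilon_i)(u_i-\gamma_{i,2})$ equals, up to the square $16$, a product $h^{(i)}_j h^{(i)}_3$ with $h^{(i)}_j:=x_i-\alpha_{j,i}$, where $j=1$ if $\epsilon_i=0$ and $j=2$ if $\epsilon_i=\gamma_{i,1}$; thus $\mathcal{A}_{\epsilon_1,\epsilon_2}$ maps to a quaternion symbol $(f_1,f_2)$ with $f_i\equiv h^{(i)}_j h^{(i)}_3$ modulo squares, and the whole computation is governed by the reductions $\bar f_i\in F$.

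The arithmetic input is Lemma~\ref{lemma:2adicValOf2TorsionPoint}: the point $T_1$ (with $\mathrm{ord}_2(\alpha_{1,i})=-2$) reduces to $\mathcal O$, while the integral points $T_2,T_3$ both reduce to the unique non-trivial $2$-torsion point of the ordinary curve $\bar E_i$, so that $\bar\alpha_{2,i}=\bar\alpha_{3,i}$ and $\bar\beta_{2,i}=\bar\beta_{3,i}$. I would draw two conclusions: first, $h^{(i)}_1$ has a double pole along the special fibre, hence modulo squares is a unit reducing to a constant $c_i\in\ell^\times$; second, $h^{(i)}_2$ and $h^{(i)}_3$ are units reducing to the non-constant functions $\bar x_i-\bar\alpha_{2,i}$, $\bar x_i-\bar\alpha_{3,i}$. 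The decisive dichotomy is then that $\bar f_i$ is a square exactly when $\epsilon_i\neq0$: in that case $f_i=h^{(i)}_2h^{(i)}_3$ reduces to $(\bar x_i-\bar\alpha_{2,i})^2$, whereas for $\epsilon_i=0$ it reduces to a constant times a single linear factor.

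For the \emph{only if} direction, in $\mathcal{A}_{0,0}$ both $\bar f_i$ are non-squares, and since $d\log\bar f_1$ and $d\log\bar f_2$ are pulled back from the two distinct factors one gets $\tfrac{d\bar f_1}{\bar f_1}\wedge\tfrac{d\bar f_2}{\bar f_2}\neq0$ in $\Omega^2_F$; by Proposition~\ref{propGradedPieces}(d) this places $\mathcal{A}_{0,0}$ in $\fil_{e'}\br(K^h)[2]\setminus\fil_{e'-1}\br(K^h)[2]$, in particular outside $\fil_0\br(A)=\Ev_0\br(A)$. As $\br_1(A)[2]\subseteq\Ev_{-1}\br(A)[2]\subseteq\fil_0\br(A)$ by Colliot-Th\'el\`ene--Skorobogatov, the subgroup $\langle\Ev_{-1}\br(A)[2],\br_1(A)[2]\rangle$ is contained in $\fil_0\br(A)$ and cannot contain $\mathcal{A}_{0,0}$. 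For the \emph{if} direction, suppose $\epsilon_1\neq0$ (the case $\epsilon_2\neq0$ being symmetric), so that $f_1=\phi_1^2 v_1$ with $\phi_1=h^{(1)}_2$ and $v_1=1+\pi^{m_1}s_1$. The key estimate is $m_1=\mathrm{ord}_2(\alpha_{2,1}-\alpha_{3,1})=1+\mathrm{ord}_2(\beta_{2,1}-\beta_{3,1})\geq2=e'$, which yields $(f_1,f_2)=(v_1,f_2)\in U^{e'}$, i.e. $\mathcal{A}_{\epsilon_1,\epsilon_2}\in\fil_0\br(A)$; moreover its class in $\gr^{e'}\cong\h^2_2(F)\oplus\h^1_2(F)$ (via $\lambda_\pi$, Proposition~\ref{propGradedPieces}(a)) lies in the $\h^2_2(F)$-summand because $f_2$ is a unit, so its residue—the $\h^1$-component of Definition~\ref{defi: residue map on fields K,F}—vanishes and $\mathcal{A}_{\epsilon_1,\epsilon_2}\in\Ev_{-2}\br(A)\subseteq\Ev_{-1}\br(A)$.

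The step I expect to be the main obstacle is the sharp valuation $m_1\geq e'$. Knowing only $\bar\alpha_{2,1}=\bar\alpha_{3,1}$ gives $m_1\geq1$, which would place the class in $\fil_1\setminus\fil_0$ and, through Proposition~\ref{propGradedPieces}(b), produce a residue $1$-form with poles along the reduced $2$-torsion point—incompatible with membership in $\fil_0$. It is precisely the extra factor of $2$ in $\alpha=-2\beta-\delta$, combined with $\bar\beta_{2,1}=\bar\beta_{3,1}$, that raises the level from $1$ to $e'$ and forces simultaneously that the class falls into $\fil_0$ and that its residue is trivial. Making this estimate precise, and checking that the passage to unit representatives modulo squares does not perturb the graded-piece computation, is where the argument has to be carried out with care.
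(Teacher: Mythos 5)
Your \emph{if} direction is correct and genuinely different from the paper's. The paper uses the Weierstrass relation to replace the first slot by $u_1\equiv x_1-\alpha_{1,1}$ modulo squares and then splits off the non-square constant, obtaining (algebraic class)$\otimes$(class in $\Ev_{-2}$); you instead write $(x_1-\alpha_{2,1})(x_1-\alpha_{3,1})=(x_1-\alpha_{2,1})^2v_1$ and show $v_1\equiv 1 \bmod 4$ in the local ring at the generic point of the special fibre. Your flagged estimate is indeed correct: the $2$-division polynomial reduces to $x+\delta_1 \bmod 2$, so $\bar\alpha_{2,1}=\bar\alpha_{3,1}=\delta_1$ and then $\bar\beta_{2,1}=\bar\beta_{3,1}$ (both integral $2$-torsion points reduce to the unique non-trivial $2$-torsion point of the ordinary reduction), whence $\mathrm{ord}_2(\alpha_{2,1}-\alpha_{3,1})=1+\mathrm{ord}_2(\beta_{2,1}-\beta_{3,1})\geq 2=e'$. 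This places the whole class in $U^{e'}$ with vanishing residue and yields the conclusion $\A_{\epsilon_1,\epsilon_2}\in\Ev_{-2}\br(A)[2]$, with no algebraic correction factor at all; that is in fact \emph{stronger} than what the paper proves (membership in the generated subgroup).

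The \emph{only if} direction, however, has a genuine gap: the inclusion $\br_1(A)[2]\subseteq\Ev_{-1}\br(A)[2]$, and hence the containment $\langle\Ev_{-1}\br(A)[2],\br_1(A)[2]\rangle\subseteq\fil_0\br(A)$ on which your argument rests, is false. The Colliot-Th\'el\`ene--Skorobogatov constancy theorem invoked in Lemma~\ref{lemma: transcendental nature of A} needs hypotheses ($\h^1(X,\Os_X)=0$, $\Pic(\bar{X})$ torsion-free and finitely generated) that hold for the K$3$ surface $X$ but fail for the abelian surface $A$; for classes of order divisible by $p$ on abelian varieties its conclusion genuinely fails, since inertia acts non-trivially on $\Pic^0$ even under good reduction (equivalently, by Lichtenbaum--Tate duality every elliptic curve over $\Q_2$ carries algebraic $2$-torsion Brauer classes with non-constant evaluation). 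An explicit counterexample inside the paper's own framework: the quaternion algebra $(3,\,x_2-\alpha_{2,2})$ is unramified on $A$ (the divisor of $x_2-\alpha_{2,2}$ is twice a point minus twice $\mathcal{O}$), is algebraic (split by $\Q_2(\sqrt{3})$), yet writing $3=1+2$ and applying Proposition~\ref{propGradedPieces}(b) its class in $\gr^1$ is $\rho_1\left(d\bar{g}/\bar{g}\right)\neq 0$ with $\bar{g}=\bar{x}_2+\delta_2$, so it lies in $\fil_1\setminus\fil_0$, in particular outside $\Ev_{-1}\br(A)[2]$. (This is precisely why the lemma is stated with the generated subgroup rather than with $\Ev_{-1}$ alone.) Consequently your correct computation that $\A_{0,0}\notin\fil_{e'-1}\br(A)$ does not exclude a decomposition $\A_{0,0}=\A_1\otimes\A_2$ with $\A_1\in\Ev_{-1}$ and $\A_2$ algebraic. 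The missing step is the paper's: any such $\A_2=\A_{0,0}\otimes\A_1$ satisfies $\rsw_{2,\pi}(\A_2)=\rsw_{2,\pi}(\A_{0,0})$ because $\Ev_{-1}\subseteq\fil_0\subseteq\fil_1=\ker\rsw_{2,\pi}$, and this refined Swan conductor has non-zero $2$-form component, so Corollary~\ref{cor: refined Swan conductor and algebraic elements} (whose proof uses the base-change behaviour of the refined Swan conductor, not merely the position in the filtration) shows $\A_2$ cannot be algebraic.
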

\begin{proof}
    We fix $\pi=2$ as a uniformiser and $\xi=-1$ as a primitive $2$-root of unity. We start by assuming that at least one among $\epsilon_1$ and $\epsilon_2$ is different from $0$. By the symmetry of the statement, we can assume without loss of generality that $\epsilon_1\ne 0$. Then 
    \[
    \A_{\epsilon_1,\epsilon_2}=((u_1-\gamma_{1,1})\cdot (u_1-\gamma_{1,2}) \, , \,(u_2-\epsilon_2)\cdot (u_2-\gamma_{2,2}) )=(u_1,f_{\epsilon_2}(u_2))
    \]
    where $f_{\epsilon_2}(u_2)=(u_2-\epsilon_2)\cdot (u_2-\gamma_{2,2})$.
    The quaternion algebra $\A_{\epsilon_1,\epsilon_2}$ corresponds via the change of variables of Lemma \ref{lemma:changeOfVarElliptic} to 
    \[
    \mathcal{A}_{\epsilon_1,\epsilon_2}=(4\cdot(x_1-\alpha_{1,1}) \, , \, f_{\epsilon_2}(4x_2-4\alpha_{2,1}))=(x_1-\alpha_{1,1} \, ,\,f_{\epsilon_2}(4x_2-4\alpha_{2,1})).
    \]
    We define 
    \[
    g_{\epsilon_2}(x_2):=\begin{cases}
        (x_2-\alpha_{2,2})\cdot (x_2-\alpha_{2,3}) \text{ if }\epsilon_2=\gamma_{2,1};\\
        (x_2-\alpha_{2,1})\cdot (x_2-\alpha_{2,3}) \text{ if }\epsilon_2=0.
    \end{cases}
    \]
    Then, $16\cdot g_{\epsilon_2}(x_2)=f_{\epsilon_2}(4x_2-4\alpha_{2,1})$. Thus we can rewrite $\A_{\epsilon_1,\epsilon_2}$ as
    \[
        (-\alpha_{1,1} \, ,\, g_{\epsilon_2}(x_2))\otimes (1+(-\alpha_{1,1}^{-1})\cdot x_1\, ,\, g_{\epsilon_2}(x_2)).
    \]
  Since $(-\alpha_{1,1}\, ,\, g_{\epsilon_2}(x_2))$ lies in $\br_1(A)[2]$, we are left to show that the class of the quaternion algebra $(1+(-\alpha_{1,1})^{-1}\cdot x_1\, ,\, g_{\epsilon_2}(x_2))$ lies in $ \Ev_{-1}\br(A)[2]$. By Lemma \ref{lemma:2adicValOf2TorsionPoint} we know that $\mathrm{ord}_2(\alpha_{1,1}^{-1})=2$ and therefore by Proposition \ref{prop: rsw and map rho_m}
  \[
    (1+(-\alpha_{1,1}^{-1})\cdot  x_1 \, , \, g_{\epsilon_2}(x_2))\in \fil_{0}\br(A)[2].
  \]
  By \cite[Theorem~C]{BrightNewton} in order to establish whether  $(1+(-\alpha_{1,1})^{-1}\cdot  x_1 \, , \, g_{\epsilon_2}(x_2))$ belongs to $\Ev_{-1}\br(A)[2]$ we need to compute $\partial(1+(-\alpha_{1,1}^{-1})\cdot  x_1 \, , \, g_{\epsilon_2}(x_2))$. We have that $g_{\epsilon_2}(x_2)\not \equiv 0 \mod 2$ and from Proposition~\ref{propGradedPieces 1} together with Proposition~\ref{prop: rsw and map rho_m} we get
  \[
  \lambda_\pi \left(\bar{x}_1\cdot d \mathrm{log}(\bar{g}_{\epsilon_2}(\bar{x}_2)),0\right)=\left(1+(-\alpha_{1,1}^{-1}) \cdot x_1 \, , \, g_{\epsilon_2}(x_2)\right )
  \]
  since $1+(-\alpha_{1,1}^{-1})\cdot x_1=1+4 \cdot (s^{-1} \cdot x_1)$ with $s=-4\cdot \alpha_{1,1}\in \Z_2^\times$ and hence $s^{-1}\cdot x_1$ is a lift to characteristic $0$ of $\bar{x}_1$. 
  Therefore, by definition of the residue map $\partial$, we get
  \[
  \partial((1+(-\alpha_{1,1}^{-1}) \cdot x_1 \, ,\,g_{\epsilon_2}(x_2)))=0
  \]
    which by Theorem \ref{thm: evaluation fil and refined Swan conductor} implies that 
    \[
        (1+(-\alpha_{1,1}^{-1}) \cdot x_1 \, , \, g_{\epsilon_2}(x_2))\in \Ev_{-2}\br(A)[2]\subseteq \Ev_{-1}\br(A)[2].
    \]
    In order to end the proof we are left to show that $\A_{\text{\tiny 0},\text{\tiny 0}}\notin \langle \Ev_{-1}\br(A)[2],\br_1(A)[2]\rangle$.
    The change of variables of Lemma \ref{lemma:changeOfVarElliptic} sends the class of the quaternion algebra 
    \[
        \A_{\text{\tiny 0},\text{\tiny 0}}=(u_1\cdot (u_1-\gamma_{1,2})\, , \, u_2\cdot(u_2-\gamma_{2,2}))=(u_1-\gamma_{1,1} \, ,\, u_2-\gamma_{2,1})
    \]
    to the class of the quaternion algebra
    \[
    (4\cdot(x_1-\alpha_{1,2}) \, ,\, 4\cdot(x_2-\alpha_{2,2}))=(x_1+2\beta_{1,2} \, , \, x_2+2\beta_{2,2}).
    \]
    From Proposition~\ref{propGradedPieces}(d) the latter is such that  
    \[
    \rho_0\left(\frac{d(\bar{x}_1)}{\bar{x}_1}\wedge \frac{d(\bar{x}_2)}{\bar{x}_2}\right)=\left[\left\{x_1+2\beta_{2,1},x_2+2\beta_{2,2}\right\}\right]\in \mathrm{gr}^0.
    \]
    In fact, $x_1+2\beta_{2,1},x_2+2\beta_{2,2}$ and $x_2+2\beta_{2,2}$ are lifts to characteristic $0$ of $\bar{x}_1$ and $\bar{x}_2$ respectively. Note that, $\frac{d(\bar{x}_1)}{\bar{x}_1}\wedge \frac{d(\bar{x}_2)}{\bar{x}_2}$ comes from a global $2$-form on the special fibre $Y$ of $A$ and hence it is non-zero in its function field. Finally, using Proposition~\ref{prop: rsw and map rho_m} we get that 
    \[
        \rsw_{2,\pi}((x_1+2\beta_{2,1},x_2+2\beta_{2,2}))=\left(\frac{d(\bar{x}_1)}{\bar{x}_1}\wedge \frac{d(\bar{x}_2)}{\bar{x}_2},0\right)\ne(0,0)
    \]
    and hence $(x_1+2\beta_{1,2} , x_2+2\beta_{2,2})\notin \fil_1\br(A)[2]\supseteq \Ev_{-1}\br(A)[2]$. Moreover, as a consequence of Corollary~\ref{cor: refined Swan conductor and algebraic elements} we get that $\A_{\text{\tiny 0},\text{\tiny 0}}\notin \langle \Ev_{-1}\br(A)[2],\br_1(A)[2]\rangle$. In fact, otherwise, there would be an element $\A_1\in \Ev_{-1}\br(A)[2]$ such that $\A_{\text{\tiny 0},\text{\tiny 0}}\otimes \A_1 \in \br_1(A)[2]$, but $\A_{\text{\tiny 0},\text{\tiny 0}}\otimes \A_1$ has the same refined Swan conductor as $\A_{\text{\tiny 0},\text{\tiny 0}}$.
\end{proof}
\begin{rmk}\label{rmk: lemma A_epsilon_1,epsilon_2 in Ev_-1 with base change}
    We will later use a slightly stronger version of the lemma above. Let $L/\Q_2$ be any field extension. Then, the image of $\A_{\epsilon_1,\epsilon_2}$ in $\br(A_L)[2]$ lies in $\langle \Ev_{-1}\br(A_L)[2],\br_1(A_L)[2]\rangle$ if and only if at least one among $\epsilon_1$ and $\epsilon_2$ is different from $0$. 
    
    We clearly have that $\A_{\epsilon_1,\epsilon_2} $ in $\langle \Ev_{-1}\br(A)[2],\br_1(A)[2]\rangle$ implies $\mathrm{res}(\A_{\epsilon_1,\epsilon_2})$ in $\langle \Ev_{-1}\br(A_L)[2],\br_1(A_L)[2]\rangle$. Moreover, we have proven that the first component of $\rsw_{2,\pi}(\A_{\text{\tiny 0},\text{\tiny 0}})$ is different from $0$, and hence using Lemma~\ref{lemma: base change L'/L} we get that $\rsw_{e_{L'/L}2,\pi}(\mathrm{res}(\A_{\text{\tiny 0},\text{\tiny 0}}))\ne (0,0)$, hence the image of $\A_{\text{\tiny 0},\text{\tiny 0}}$ in $\br(A_L)[2]$ does not lie in $\langle \Ev_{-1}\br(A_L)[2],\br_1(A_L)[2]\rangle$
\end{rmk}
%\begin{rmk}    Note that, $\frac{d(\bar{x}_1+\delta)}{\bar{x}_1+\delta}\wedge \frac{d(\bar{x}_2+\delta)}{\bar{x}_2+\delta}$ comes from a global $2$-form on the special fibre $Y$ of $A$; in fact, it is the (local) expression of a global $2$-logarithmic form on the special fibre $Y$ of $A$.\end{rmk}
\subsubsection{Brauer--Manin obstruction from 2-torsion elements}
Let $V$ be $\mathrm{Kum}(A_{\Q})$. We denote by $A$ and $X$ the base changes of the abelian surface $A_{\Q}$ and the corresponding Kummer surface $X$ to $\Q_2$. By Section \ref{subsubsection: Kummer K3 generalities} we know that $\A_{\text{\tiny 0},\text{\tiny 0}}$ descends to $X$ if and only if 
\[
 \left[ -\gamma_{1,1}\cdot \gamma_{2,1}, \gamma_{2,1}(\gamma_{2,1}-\gamma_{2,2}), \gamma_{1,1} (\gamma_{1,1}-\gamma_{1,2}),1\right] \in (\Q_2^{\times 2})^4.
\]
By construction, $\gamma_{1,1}=4\cdot(\alpha_{1,2}-\alpha_{1,1})=8\beta_{1,1}-8\beta_{1,2}$ and therefore
\[
\gamma_{1,1}\equiv 8\beta_{1,1} \equiv 1+4a_1 \mod 8.
\]
In fact, from Lemma \ref{lemma:2adicValOf2TorsionPoint} and more precisely from equation~\eqref{eq: polynomial phi(y)} we know that $8\beta_{1,1}+8\beta_{1,2}+8\beta_{1,3}=1+4a_1$ and $\mathrm{ord}_2(\beta_{1,2})=\mathrm{ord}_2(\beta_{1,3})\geq 0$.
Similarly, 
\[
\gamma_{2,1}\equiv 8\beta_{2,1} \equiv 1+4a_2 \mod 8.
\]
In particular, both $\gamma_{1,1}$ and $\gamma_{2,1}$ are either $1$ or $5$ modulo $8$; hence $-\gamma_{1,1}\cdot \gamma_{2,1}$ is either $-1$ or $3$ and therefore it is never a square. 

\vspace{1mm}

In summary, we just showed that $\A_{\text{\tiny 0},\text{\tiny 0}}\in \br(A)$ never descends to $\br(X)$. We are ready to prove the main theorem of this section.
\begin{thm}\label{thm: Kummer}
    Let $X=\mathrm{Kum}(A)$, where $A=E_1\times E_2$ is as in Section \ref{subsubsection: Kummer K3 generalities}. Then $\br(X)[2]=\Ev_{-1}\br(X)[2]$.
\end{thm}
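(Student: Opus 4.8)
The plan is to translate the statement into the language of the refined Swan conductor and then import the descent computation for $A=E_1\times E_2$. First I would fix $L=\Q_2$, so that $e=\mathrm{ord}_{\Q_2}(2)=1$ and $e'=2$, and record the three reductions that hold because $X$ is a K$3$ surface with good ordinary reduction: $\Ev_{-1}\br(X)[2]=\fil_0\br(X)[2]$ (since $\h^1(\bar Y,\Z/2\Z)=0$), $\fil_0\br(X)[2]=\fil_1\br(X)[2]$ (since $2\nmid 1$ and $\h^0(Y,\Omega^1_Y)=0$, by Corollary \ref{cor: on the image of the refined swan conductor}(a) together with Remark \ref{rmk: rsw comp proj_2}, which forces $\rsw_{1,\pi}=0$), and $\br(X)[2]=\fil_{e'}\br(X)[2]=\fil_2\br(X)[2]$. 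Hence the theorem is equivalent to the vanishing of $\rsw_{2,\pi}$ on $\br(X)[2]$, i.e.\ to the equality $\fil_2\br(X)[2]=\fil_1\br(X)[2]$.

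Next I would transport this question to the abelian surface via $\pi^*$. Using the compatibility of the refined Swan conductor with the pullback along the Kummer map, together with the fact that $\pi^*$ induces an isomorphism on the one–dimensional $\F_2$–spaces $\h^0(Y_X,\Omega^2_{Y_X,\log})$ and $\h^0(Y_A,\Omega^2_{Y_A,\log})$ (both global $2$–forms being logarithmic in the ordinary case, with $\pi^*\omega_X=\omega_A\neq 0$), I would reduce to showing $\rsw_{2,\pi}(\pi^*\mathcal{C})=0$, that is $\pi^*\mathcal{C}\in\fil_1\br(A)[2]$, for every $\mathcal{C}\in\br(X)[2]$. The proof of Lemma \ref{lemma: A_epsilon_1,epsilon_2 in Ev_-1} shows that the three generators $\A_{\gamma_{1,1},\gamma_{2,1}}$, $\A_{\gamma_{1,1},0}$, $\A_{0,\gamma_{2,1}}$ of $\br(A)[2]/\br_1(A)[2]$ already lie in $\fil_0\br(A)[2]$, whereas $\rsw_{2,\pi}(\A_{0,0})=\bigl(\tfrac{d(\bar x_1+\delta)}{\bar x_1+\delta}\wedge\tfrac{d(\bar x_2+\delta)}{\bar x_2+\delta},0\bigr)\neq 0$, so $\A_{0,0}\in\fil_2\br(A)[2]\setminus\fil_1\br(A)[2]$. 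Since $\rsw_{2,\pi}$ is additive and kills the other generators and $\br_1(A)[2]$, a class $\pi^*\mathcal{C}$ lies in $\fil_1\br(A)[2]$ exactly when its $\A_{0,0}$–component vanishes, so it remains to prove that no class descending from $X$ has a nonzero $\A_{0,0}$–component.

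Here I would invoke the Skorobogatov–Zarhin descent criterion through the matrix $M$. Writing $\pi^*\mathcal{C}\equiv\sum_j n_j\A_j\pmod{\br_1(A)[2]}$, the descent condition recorded in the first row of $M$ reads $\prod_j M_{1j}^{\,n_j}\in\Q_2^{\times 2}$. Now $M_{11}=1$ and, by the $2$–adic analysis of the $2$–torsion in Lemma \ref{lemma:2adicValOf2TorsionPoint}, one checks that $M_{12}=\gamma_{1,1}\gamma_{1,2}\equiv 1$ and $M_{13}=\gamma_{2,1}\gamma_{2,2}\equiv 1\pmod 8$, so both are squares, while $M_{14}=-\gamma_{1,1}\gamma_{2,1}\notin\Q_2^{\times 2}$ was computed just before the theorem. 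The condition therefore collapses to $(-\gamma_{1,1}\gamma_{2,1})^{n_{0,0}}\in\Q_2^{\times 2}$, forcing $n_{0,0}=0$. Thus $\pi^*\mathcal{C}\in\fil_1\br(A)[2]$, and by the previous paragraph $\mathcal{C}\in\fil_1\br(X)[2]=\Ev_{-1}\br(X)[2]$, which proves the theorem.

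I expect the main obstacle to be the transfer step across $\pi$: establishing that the refined Swan conductor is functorial for the Kummer pullback and that $\pi^*$ is injective (indeed bijective) on global logarithmic $2$–forms, since $\pi$ is only a rational map, resolved through the blow–up at the $2$–torsion, and one must compare the invariants over the henselian discrete valuation rings at the two special fibres. A cleaner but less self-contained alternative would bypass the refined Swan conductor and instead prove the converse of Lemma \ref{lemma: pi^*(Ev_-1br(X)) is in Ev_-1br(A)} directly — that $\ev_\mathcal{C}$ is constant on $X(L')$ once $\ev_{\pi^*\mathcal{C}}$ is constant on all $A(L'')$ — using that the points of $X$ lifting to $A$ over at most quadratic extensions are dense and that $\ev_\mathcal{C}$ is locally constant. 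The secondary obstacle is then justifying the combination form of the criterion (the condition $\prod_j M_{1j}^{\,n_j}\in\Q_2^{\times 2}$ for an arbitrary class, of which the four statements quoted in Section \ref{subsubsection: Kummer K3 generalities} are the standard–basis specializations) and the squareness of $M_{12}$ and $M_{13}$.
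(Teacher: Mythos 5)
Your opening reduction is sound: over $\Q_2$ one has $e'=2$, and for a K$3$ surface with good ordinary reduction $\Ev_{-1}\br(X)[2]=\fil_0\br(X)[2]=\fil_1\br(X)[2]$ while $\br(X)[2]=\fil_2\br(X)[2]$, so the theorem is indeed equivalent to the vanishing of $\rsw_{2,\pi}$ on $\br(X)[2]$; your $2$-adic claim that $M_{12}=\gamma_{1,1}\gamma_{1,2}$ and $M_{13}=\gamma_{2,1}\gamma_{2,2}$ lie in $\Q_2^{\times 2}$ is also correct (both are $\equiv 1 \bmod 8$ by Lemma~\ref{lemma:2adicValOf2TorsionPoint}). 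The proposal fails, however, at exactly the two places you flag as ``obstacles'', and the first is worse than you suggest. The transfer of $\rsw$ across $\pi$ is not a routine functoriality: in residue characteristic $2$ the Kummer construction does not commute with reduction. The special fibre of a smooth model of $X=\mathrm{Kum}(A)$ is \emph{not} obtained as the Kummer surface of the special fibre of the abelian scheme model of $A$ (in characteristic $2$ the Kummer construction degenerates, which is precisely why good reduction of $\mathrm{Kum}(A)$ at $2$ is delicate and requires the Lazda--Skorobogatov theorem invoked in the proof). The $2$-torsion sections of the abelian model collide in the special fibre, so the blow-up resolving $\pi$ has no smooth model, there is no map of special fibres along which ``$\pi^*\omega_X=\omega_A$'' can even be stated, and no functoriality result of Bright--Newton applies. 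The paper's proof is engineered to avoid any comparison of refined Swan conductors across $\pi$: the only functoriality it uses is $\ev_{\pi^*\mathcal{C}}(P)=\ev_{\mathcal{C}}(\pi(P))$ (Lemma~\ref{lemma: pi^*(Ev_-1br(X)) is in Ev_-1br(A)}); all $\rsw$ computations are done on $A$ and on $X_L$ \emph{separately}, and the two sides are linked by passing to a finite extension $L/\Q_2$ over which every entry of $M$ becomes a square, where ordinarity forces $\br(X_L)[2]/\Ev_{-1}\br(X_L)[2]$ to be at most one-dimensional with generator $(\mathcal{C}_{0,0})_L$; the statement over $\Q_2$ then follows by multiplicativity together with the base-change Lemma~\ref{lemma: base change L'/L}. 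Your fallback alternative also breaks: constancy of $\ev_{\pi^*\mathcal{C}}$ on $A(L'')$ for quadratic $L''/L'$ gives no information about $\ev_{\mathcal{C}}$ on $X(L')$, because the restriction $\br(L')\to\br(L'')$ kills $2$-torsion.

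The second gap is the ``combination form'' of the descent criterion, which is a genuinely missing ingredient rather than bookkeeping. What is quoted from Skorobogatov--Zarhin determines which of the four basis classes $\A_{\epsilon_1,\epsilon_2}$ lie in the subgroup $\pi^*\br(X)[2]$ modulo $\br_1(A)[2]$; since the image is only known to be a subgroup, this does not determine the subgroup itself, and nothing you cite excludes, say, $\A_{0,0}\otimes\A_{\gamma_{1,1},\gamma_{2,1}}$ descending even though neither factor does --- in which case your conclusion $n_{0,0}=0$ collapses. (The paper's detour through $L$ is what allows it to argue generator by generator instead.) Two smaller inaccuracies: Lemma~\ref{lemma: A_epsilon_1,epsilon_2 in Ev_-1} places the three generators with $(\epsilon_1,\epsilon_2)\neq(0,0)$ in $\langle\Ev_{-1}\br(A)[2],\br_1(A)[2]\rangle$, not in $\fil_0\br(A)[2]$, and $\rsw_{2,\pi}$ does \emph{not} annihilate $\br_1(A)[2]$ --- only its $\Omega^2$-component vanishes, by Corollary~\ref{cor: refined Swan conductor and algebraic elements} --- a distinction that is real on $A$ because its reduction carries nonzero global $1$-forms. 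These two points are repairable by tracking only the $2$-form component of $\rsw_{2,\pi}$ throughout, but the two gaps above are not repairable within your framework: they are the crux of the theorem, and the paper's entirely different route through the large extension $L$ exists precisely to avoid crossing them.
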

\begin{proof}
We recall that if $\A_{\epsilon_1,\epsilon_2}$ descends to $\br(X)[2]$, we denote by $\mathcal{C}_{\epsilon_1,\epsilon_2}$ the corresponding element in $\br(X)[2]$, i.e. $\mathcal{C}_{\epsilon_1,\epsilon_2}$ is such that $\pi^*(\mathcal{C}_{\epsilon_1,\epsilon_2})=\A_{\epsilon_1,\epsilon_2}$. We need to prove that if $\A_{\epsilon_1,\epsilon_2}$ descends to $\br(X)$, then $\mathcal{C}_{\epsilon_1,\epsilon_2}$ lies in $\Ev_{-1}\br(X)$. Since we have already shown at the beginning of this section that $\A_{\text{\tiny 0},\text{\tiny 0}}$ never descends to $\br(X)$ we are left to show it for $(\epsilon_1,\epsilon_2)\ne (0,0)$.
   
Let $L/\Q_2$ be such that all elements appearing in the matrix $M$ of Section~\ref{subsubsection: Kummer K3 generalities} are squares, i.e. the injective map 
\[
    \pi^*\colon \frac{\br(X)[2]}{\br_1(X)[2]}\hookrightarrow \frac{\br(A)[2]}{\br_1(A)[2]}
\]
is an isomorphism. 

With abuse of notation, we denote by $\mathrm{res}$ both 
\[
\mathrm{res}\colon \br(A)\rightarrow \br(A_L) \quad \text{and} \quad \mathrm{res}\colon \br(X)\rightarrow \br(X_L).
\]
We denote by $(\mathcal{C}_{\epsilon_1,\epsilon_2})_L$ the pre-image of $\mathrm{res}(\A_{\epsilon_1,\epsilon_2})\in \br(A_L)[2]$.

     From \cite[Theorem~2]{LazdaSkoro} we know that the reduction of $X$ is an ordinary K$3$ surface. Let $e$ be the ramification index of $L/\Q_2$ and $\pi_L$ a uniformiser of $\Os_L$; then $\fil_n \br(X_L)[p]=\fil_0\br(X_L)[p]=\Ev_{-1}\br(X_L)[p]$ if $n<e':=2e$ and $\fil_{e'}\br(X)[p]=\br(X_L)[p]$, see Remark~\ref{cor: on the image of the refined swan conductor 1}. Hence, using Corollary~\ref{cor: on the image of the refined swan conductor 3}  we have an injection 
    \[
        \mathrm{mult}_{\overline{c}}\circ \rsw_{e',\pi_L}:\frac{\br(X_L)[2]}{\Ev_{-1}\br(X_L)[2]} \hookrightarrow \h^0(Y_\ell,\Omega^2_{Y_\ell,\log})
    \]
    where $\ell$ is the residue field of $L$.
    
    Since $X$ (and hence all its base changes) has good ordinary reduction, we know that $\h^0(\bar{Y},\Omega^2_{\bar{Y},\log})\otimes_{\F_2}\bar{\ell}$ is a one-dimensional $\bar{\ell}$-vector space (cf. equation~\eqref{eq: cohomology of M^q_1 ordinary case}) and hence $\br(X_L)[2]/\Ev_{-1}\br(X_L)[2]$ is a vector space of dimension at most $1$ over $\F_2$.
    
     From Lemma \ref{lemma: A_epsilon_1,epsilon_2 in Ev_-1} we know that $\A_{\text{\tiny 0},\text{\tiny 0}}\notin \fil_1\br(A)[2]$. Applying Lemma~\ref{lemma: base change L'/L} we get that $\mathrm{res}(\A_{\text{\tiny 0},\text{\tiny 0}})\notin \fil_{e}\br(A_L)[2]$ and by Lemma \ref{lemma: pi^*(Ev_-1br(V)) is in Ev_-1br(A)} $(\mathcal{C}_{\text{\tiny 0},\text{\tiny 0}})_L\notin \Ev_{-1}\br(X_L)[2]$. Therefore \[
     \langle [(\mathcal{C}_{\text{\tiny 0},\text{\tiny 0}})_L] \rangle =\frac{\br(X_L)[2]}{\Ev_{-1}\br(X_L)[2]}.
     \] 
%We have already shown at the beginning of this section that $\A_{\text{\tiny 0},\text{\tiny 0}}$ never descends to $\br(X)$. 
Assume that there exists $(\epsilon_1,\epsilon_2)\ne (0,0)$ such that $\A_{\epsilon_1,\epsilon_2}$ descends to $\br(X)$ and the corresponding element $\mathcal{C}_{\epsilon_1,\epsilon_2}$ does not lie in $\Ev_{-1}\br(X)[2]$. 
By Lemma \ref{lemma: base change L'/L} $\mathrm{res}(\mathcal{C}_{\epsilon_1,\epsilon_2})$ does not lie in $\Ev_{-1}\br(X_L)[2]$ and therefore, since the quotient $\br(X_L)[2]/\Ev_{-1}\br(X_L)[2]$ is a $1$-dimensional $\F_2$-vector space, we have that also the product $\mathrm{res}(\mathcal{C}_{\epsilon_1,\epsilon_2})\otimes (\mathcal{C}_{\text{\tiny 0},\text{\tiny 0}})_L$ lies in $\Ev_{-1}\br(X)[2]$. This implies that also the corresponding element in $\br(A_L)[2]$, $\mathrm{res}(\A_{\epsilon_1,\epsilon_2}\otimes \A_{\text{\tiny 0},\text{\tiny 0}})$ lies in $\Ev_{-1}\br(A_L)$. However, since by Remark~\ref{rmk: lemma A_epsilon_1,epsilon_2 in Ev_-1 with base change}  $\mathrm{res}(A_{\epsilon_1,\epsilon_2})$ lies in $ \langle \Ev_{-1}\br(X_L)[2], \br_1(X_L)[2]\rangle$, we get that also $\mathrm{res}(\A_{\text{\tiny 0},\text{\tiny 0}})$ has to lie in $\langle \Ev_{-1}\br(A_L)[2],\br_1(A_L)[2]\rangle$ which gives us the desired contradiction. 
\end{proof}
We end this section with an example of a K$3$ surface over $\Q$ with good ordinary reduction at $2$ and such that $\br(X)=\Ev_{-1}\br(X)$. The existence of such an example shows that the converse of Theorem \ref{intro_thm: ordinary good reduction} is not true, i.e. it is not enough to have that $p-1\mid e$ in order to find an element in $\br(X)$ that does not lie in $\Ev_{-1}\br(X)$.
\begin{example}\label{example: Kummer}
Let $A=E\times E$, where $E$ is the elliptic curve given by the minimal Weierstrass equation 
\[
y^2+xy=x^3-3x^2-4x+11.
\]
With the same notation as in the previous sections, we get $\beta_1=-11/8$, $\beta_2=-1$ and $\beta_3=1$. Hence 
\[
\alpha_1=11/4, \quad \alpha_2=2, \quad \alpha_3=-2 \quad \text{ and } \quad \gamma_{1}=-3, \quad \gamma_2=-19.
\]
The matrix $M$ is of the form
\[
    \begin{pmatrix}
    1 &3\cdot 19 &3\cdot 19 &-9\\
    3\cdot 19 &1 &9 &-3\cdot 18 \\
    3\cdot 19 &9 &1 &-3\cdot 18\\
    -9 &-3\cdot 18 &-3\cdot 18 &1
    \end{pmatrix}.
    \]
    In particular, all the rows of $M$ have at least one term which does not lie in $\Q_2^{\times 2}$. Moreover, using \cite[Proposition~3.7]{SkoroZarhin} we can compute the dimension as an $\F_2$-vector space of the quotient of $\br(X)[2]$ by $\br(\Q_2)[2]$ and in this case
    \[
        \mathrm{dim}_{\F_2}\left(\frac{\br(X)[2]}{\br(\Q_2)[2]}\right)=0.
    \]
    We want to show that $\br(X)\{2\}=\br(\Q_2)\{2\}$. We work by induction on $n$; let $\A$ be in $\br(X)[2^n]$, then 
    \[
        \A^{\otimes 2^{n-1}}\in \br(X)[2]= \br(\Q_2)[2].
    \]
    In particular, given $P\in X(\Q_2)$, we have that
    \[
    (\A \otimes \ev_{\A}(P))^{\otimes 2^{n-1}}=\A^{\otimes 2^{n-1}}\otimes \ev_{\A^{\otimes 2^{n-1}}}(P)=0,
    \]
    hence $\A\otimes \ev_{\A}(P)\in \br(X)[2^{n-1}]$, and by induction hypothesis $\A\in \br(\Q_2)[2^n]$.
\end{example}
\subsection{Examples of Brauer--Manin obstruction}
We continue by giving new examples of primes of good reduction that play a role in the Brauer--Manin obstruction to weak approximation in the case $p=3$ and $p=5$.
\begin{example}
    Let $L=\Q_3(\zeta)$ with $\zeta$ a primitive $3$-root of unity. Let $\pi$ be a uniformiser of $\Os_L$; then $e=e(L/\Q_3)=2$ and the residue field $\ell$ is equal to $\F_3$.

    We define $X$ to be the Kummer K$3$ surface over $L$ attached to the abelian surface $A=E\times E$, with $E$ the elliptic curve over $L$ defined by the Weierstrass equation
    \[
        y^2=x^3+4\cdot x^2+3\cdot x+1.
    \]
    The elliptic curve $E$ (and hence $A$ and $X$) has good ordinary reduction at the prime $\ip=(\pi)$. We denote by $\{x,y,z\}$ and $\{u,v,w\}$ the variables corresponding to the embedding of respectively the first and the second copy of $E$ in $\p^2_L$. We define the cyclic algebra
    \[
    \A:=\left(\frac{v-u}{w},\frac{y-x}{z}\right)_{\zeta} \in \br(L(A))[3].
    \]
    \emph{Claim 1:} $\A$ belongs to $\br(A)[3]$.  
    \begin{proof}
        First of all, notice that from $y^2z=x^3+4x^2z+3xz^2+z^3$ we get
    \[
        z(y-x)(y+x)=(x+z)^3 \quad \text{and} \quad z(y^2-4x^2-3xz-z^2)=x^3.
    \]
    Then:
    \begin{itemize}
        \item[-] if $z=0$, then $x=0$ and therefore $y^2-4x^2-3xz-z^2\ne 0$ and $x\ne y$ and from the equation above, $\A$ is equivalent to 
        \[
        \left(\frac{v-u}{(v^2-4u^2-3uw-w^2)^{-1}},\frac{y-x}{(y^2-4x^2-3xz-z^2)^{-1}}\right)_{\zeta};
        \]
        \item[-] if $x=y$, then $x\ne -y$ and $z\ne 0$ (since $z=0$ implies $x\ne y$) and from the equation above, $\A$ is equivalent to 
        \[
            \left(\frac{w}{v+u},\frac{z}{x+y}\right)_{\zeta}.
        \]
    \end{itemize}  
    Thus we see that, given a divisor $D$ over which $\A$ is not well defined, we are able to find an Azumaya algebra which is equivalent to $\A$ on an open set and well defined along $D$. Hence, $\A$ defines an element in $\br(A)[3]$.
    \end{proof}
    \noindent \emph{Claim 2:} $\A$ does not lie in $\Ev_{-1}\br(A)[3]$.
    \begin{proof}
        The regular global $1$-form on the reduction of $E$ modulo $\ip$ is given by the (local) formula
    \[
        \frac{dx}{2y}=-\frac{1}{2}\cdot \frac{dx\cdot(x-y)}{y(y-x)}=-\frac{1}{2}\cdot \frac{dx \cdot \frac{x}{y}-dx}{y-x}=\frac{d(y-x)}{y-x}
    \]
    where the last equality follows from the fact that on the special fibre $\frac{dx}{y}=\frac{dy}{x}$ and, since we are in characteristic $3$, $\frac{1}{2}=-1$.
    Hence, if we denote by $Y$ the reduction modulo $\ip$ of $A$, we have that the global $2$-form on $Y$ is given by
    \[
    \omega=\frac{d\left(\frac{v-u}{w}\right)}{\left(\frac{v-u}{w}\right)}\wedge \frac{d\left(\frac{y-x}{z}\right)}{\left(\frac{y-x}{z}\right)}.
    \]
    
    Finally, $\rho_0(\omega,0)=\left[\left\{\frac{v-u}{w},\frac{y-x}{z}\right\}\right]$ and hence again by Proposition~\ref{prop: rsw and map rho_m} we get that
    \[
        \rsw_{3,\pi}(\A)\ne (\bar{c}^{-1}\cdot \omega,0)
    \]
    and therefore $\A\notin \fil_2\br(A)[3]\supseteq \Ev_{-1}\br(A)[3]$.
    \end{proof}
    \noindent \emph{Claim 3:} The cyclic algebra $\A$ in $\br(A)[3]$ is not algebraic, i.e. $\A\notin \br_1(A)[3]$.
    \begin{proof}
        If follows directly from Corollary~\ref{cor: refined Swan conductor and algebraic elements}.
    \end{proof}
    Finally, from \cite[Theorem~2.4]{SkoroZarhin} the map 
    \[
    \pi^*\colon \frac{\br(X)[3]}{\br_1(X)[3]}\hookrightarrow \frac{\br(A)[3]}{\br_1(A)[3]}
    \]
    is an isomorphism. Let $\mathcal{B}\in \br(X)[3]$ be such that $\pi^*(\mathcal{B})=\A\in \br(A)[3]$. Then, from Lemma~\ref{lemma: pi^*(Ev_-1br(V)) is in Ev_-1br(A)} we get that $\mathcal{B}\notin \Ev_{-1}\br(X)[3]$, namely (since for K$3$ surfaces $\Ev_0\br(X)=\Ev_{-1}\br(X)$, see Section~\ref{section: the case of K3 surfaces}) the corresponding evaluation map on $X(L)$ is non-constant.
\end{example}

\begin{example}\label{ex: diagonal quartic ordinary}
    Let $X$ be the diagonal quartic surface over $\Q_5$ defined by the equation:
    \[
    5x^4-4y^4=z^4+w^4.
    \]
    Skorobogatov and Ieronymou prove \cite[Theorem~1.1]{IeronymouSkoro}, \cite[Proposition~5.12]{IeronymouSkoro} that there exists an element $\A\in \br(X)[5]$ with surjective evaluation map. Let $L=\Q_5(\sqrt[4]{5})$, $e(L/\Q_5)=4$ and $\alpha\in L$ be such that $\alpha^4=5$. Then the change of variables:
    \[
    (x,y,z,w)\mapsto \left(\frac{x_1}{\alpha},y_1,z_1,w_1\right)
    \]
    sends $X_{L}$ to the diagonal quartic $\tilde{X}/L$ given by the equation
    \[
        x_1^4-4y_1^4=z_1^4+w_1^4.
    \]
    The surface $\tilde{X}$ has good ordinary reduction over $L$, indeed its special fibre is given by the Fermat quartic over $\mathbb{F}_5$, which using Lemma~\ref{lemma: criterium ordinary k3} can be checked to be ordinary. Finally, by Lemma \ref{lemma: base change no good-reduction} we know that $\mathrm{res}(\A)\in \br(X_{L'})=\br(\tilde{X})$ has non-constant evaluation map.
    \begin{rmk}
        We have that $e'=p=5$ and from Corollary~\ref{cor: on the image of the refined swan conductor} we know that $\A$ has to lie in $\fil_p\br(X)[5]\setminus \fil_{p-1}\br(X)[5]$. In fact, since $X$ is ordinary, we have $\h^0(Y,B^2_Y)=\h^0(Y,B^1_Y)=0$ and therefore the image of $\rsw_{n,\pi}$ is zero on the $5$-torsion elements for $n<e'=5$.
    \end{rmk}
\end{example}
\section{Non-ordinary good reduction}\label{Section: non-ordinary good reduction}
In this section we analyse what happens for primes of good non-ordinary reduction in the case of K$3$ surfaces. We are going to prove the following theorem.
\begin{thm}\label{thm: good non-ordinary reduction}
    Let $V$ be a K$3$ surface and $\ip$ be a prime of good non-ordinary reduction for $V$ with $e_{\ip} \leq (p-1)$. Then the prime $\ip$ does not play a role in the Brauer--Manin obstruction to weak approximation on $V$.
\end{thm}
Like in the previous section, let $X$ be the base change to $k_{\ip}$ of $V$, then we denote by $\mathcal{X}$ the base change of $\mathcal{V}$ to $\Os_{\ip}$ and by $Y$ the special fibre $\mathcal{V}(\ip)$. As already explained before, it is enough to show that for every element $\A\in \br(X)$ the evaluation map $\ev_\A\colon X(k_{\ip})\rightarrow \Q/\Z$ is constant. We begin with the following lemma.

\begin{lemma}
    Assume that the special fibre $Y$ of the $\Os_L$-model $\mathcal{X}$ is such that the Cartier operator acts trivially on $\h^0(Y,Z^2_Y)$ and there are no global $1$-forms. Then, if the absolute ramification index $e\leq p-1$, we have $\br(X)=\fil_0 \br(X)$.
\end{lemma}
\begin{proof}
    If $e<p-1$ the result is proven by Bright and Newton, \cite[Lemma~11.2]{BrightNewton} without any extra assumption on $Y$. If $e=p-1$, let $\A\in \fil_n \br(X)$ with $\rsw_{n,\pi}=(\alpha,\beta)$. In order to prove the Lemma it is enough to show that $(\alpha,\beta)=(0,0)$.
    \begin{itemize}
        \item[-] If $n<e'$ then $p\nmid n$ and hence by Corollary \ref{cor: on the image of the refined swan conductor 1} and the assumption that $\h^0(Y,\Omega^1_Y)=0$ we get $\rsw_{n,\pi}(\A)=(0,0)$.
        \item[-] If $n>e'$ then $p\nmid n$ or $p\nmid n-e$ and hence either $\rsw_{n,\pi}(\A)$ or $\rsw_{n-e,\pi}(\A^{\otimes p})$ is zero and therefore by Corollary \ref{cor: on the image of the refined swan conductor 2} $\rsw_{n,\pi}(\A)=(0,0)$. 
        \item[-] If $n=e'$, then by Corollary \ref{cor: on the image of the refined swan conductor 3} we have 
        \[
            \rsw_{e'-e,\pi}(\A^{\otimes p})=(C(\alpha)-\bar{u} \alpha,C(\beta)-\bar{u} \beta).
        \]
        However, by assumption, on $Y$, $C(\alpha)=0$. Hence, 
        \[
            \rsw_{e'-e,\pi}(\A^{\otimes p})=(-\bar{u} \alpha,0).
        \]
        Finally, $e'-e<e'$ and hence $\rsw_{e'-e,\pi}(\A^{\otimes p})=0$, which implies that $\alpha=0$ and thus also in this case $\rsw_{e',\pi}(\A)=(0,0)$.
    \end{itemize} 
\end{proof}
\begin{proof}[Proof of Theorem~\ref{thm: good non-ordinary reduction}]
    If $V$ is a K$3$ surface and $\ip$ is a prime of good non-ordinary reduction, then the special fibre at $\ip$ is a K$3$ surface having good non-ordinary reduction. It is proven in \cite[Proof of Corollary~9.5]{vanderGeerKatsura} that the natural inclusion $B^2_Y\hookrightarrow Z^2_Y$ induces an isomorphism $\h^0(Y,B^2_Y)\simeq \h^0(Y,Z^2_Y)$. The short exact sequence 
    \[
        0\rightarrow B^2_Y \rightarrow Z_Y^2 \xrightarrow{C} \Omega^2_Y \rightarrow 0
    \]
    implies that the Cartier operator is trivial on $\h^0(Y,Z^2_Y)$. Hence, we are in the setting of the previous lemma. The result now follows from the fact that for K$3$ surfaces $\fil_0\br(V_{\ip})=\Ev_{-1}\br(V_{\ip})$.
\end{proof}  

\subsection{An example}\label{subsection: non ordinary case Example}
This example was suggested by Evis Ieronymou. 

In \cite{ErrataIeronymouSkoro} Skorobogatov and Ieronymou prove that for the diagonal quartic surface $X/\Q_3$ defined by the equation
\begin{equation}
    x^4-y^4=12\cdot z^4-9\cdot w^4
\end{equation}
there exists an element $\A \in \br(X)[3]$ such that the corresponding evaluation map is non-constant on $X(\Q_3)$. In analogy with Example~\ref{ex: diagonal quartic ordinary} we have that $X$ has potentially good reduction. Let $L=\Q_3(\zeta_4,\alpha)$, with $\alpha^4=3$, then $e(L/\Q_3)=4$; the change of variables 
\[
    (x,y,z,w)\mapsto \left(x_1,y_1,\frac{z_1}{\alpha},\frac{w_1}{\alpha^2}\right)
\]
sends $X_L$ to the diagonal quartic $\tilde{X}/L$ given by the equation
\[
    x_1^4-y_1^4=4\cdot z_1^4-w_1^4.
\]
Using Lemma~\ref{lemma: criterium ordinary k3} it is possible to check that the K$3$ surface $\tilde{X}$ has good non-ordinary reduction over $L$. The special fibre is the Fermat's quartic over $\mathbb{F}_3$, which by a result of Shioda \cite{Shioda} is known to be supersingular. By Lemma~\ref{lemma: base change no good-reduction} we know that $\mathrm{res}(\A)\in \br(X_L)=\br(\tilde{X})$ has non-constant evaluation map. Hence, $\tilde{X}$ is a K$3$ surface over a $3$-adic field having ramification index $4$, good non-ordinary reduction and such that $\Ev_{-1}\br(\tilde{X})[3]\subsetneq \br(\tilde{X})[3]$. Moreover, $e'=6$ and we proved that $\rsw_{6,\pi}=0$ on the $3$-torsion of $\br(\tilde{X})$ and hence $\mathrm{res}(\A)$ has to lie in $\fil_3\br(\tilde{X})[3]$ and be such that $\rsw_{3,\pi}(\mathrm{res}(\A))\ne (0,0)$.
\section{A family of examples}\label{Section: family of examples}
We end this paper by giving an example of a family of K$3$ surfaces.

Let $\alpha \in \bar{\Q}$ be such that $\alpha^2\in \Z$ and let $V_\alpha$ be the K$3$ surface over $k:=\Q(\alpha)$ defined by the equation 
\begin{equation}\label{eq: example V_a}
        x^3y + y^3 z+ z^3 w- w^4+\alpha^2\cdot  xyzw -2\cdot \alpha^{-1} \cdot x z w^2=0.
    \end{equation}
    \begin{lemma}
        The class of the quaternion algebra 
        \[
         \A:= \left(\frac{z^2+\alpha^2 \cdot  xy}{z^2},-\frac{z}{x}\right)\in \br(k(V_\alpha))
        \]
        lies in $\br(V_\alpha)[2]$. 
    \end{lemma}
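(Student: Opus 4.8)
The plan is to show that $\A$ is unramified and then invoke purity: since $X_\alpha$ is smooth and proper over the number field $k$ (characteristic zero), the group $\br(X_\alpha)$ is exactly the subgroup of classes in $\br(k(X_\alpha))$ whose residue (tame symbol) vanishes at every codimension-one point. Writing $\A=(f,g)$ with $f=\frac{z^2+\alpha^2xy}{z^2}$ and $g=-\frac{z}{x}$, and setting $N:=z^2+\alpha^2xy$, I first note that the only prime divisors along which $f$ or $g$ acquires a zero or a pole are $D_x=\{x=0\}$, $D_z=\{z=0\}$ and $D_f=\{N=0\}$; these are therefore the only places where the residue $\partial_D(\A)\in k(D)^\times/(k(D)^\times)^2$ can be nonzero. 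Throughout I will use that $\alpha\in k$, so $\alpha^2=(\alpha)^2$ is a square in $k$.

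The key algebraic input comes from rewriting the defining equation. Grouping $z^3w+\alpha^2xyzw=zw\,N$ turns \eqref{eq: example X_a} into $zw\,N=R$, where $R:=w^4+2\alpha^{-1}xzw^2-x^3y-y^3z$. From this I extract two relations. Setting $z=0$ in \eqref{eq: example X_a} gives $x^3y=w^4$ on $D_z$. On $D_f$ one has $N=0$, hence $z^2=-\alpha^2xy$, and also $R=0$ (since $z,w$ are units at the generic point of $D_f$); completing the square $w^4+2\alpha^{-1}xzw^2=(w^2+\alpha^{-1}xz)^2-\alpha^{-2}x^2z^2$ and substituting $\alpha^{-2}x^2z^2=-x^3y$ then collapses $R=0$ to the identity $(w^2+\alpha^{-1}xz)^2=y^3z$ on $D_f$.

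Now I compute the three tame symbols. At $D_x$ one has $v(g)=-1$, $v(f)=0$ and $\bar f=1$, so $\partial_{D_x}(\A)=\overline{f^{-1}}=1$. At $D_z$ one has $v(f)=-2$, $v(g)=1$, whence $\partial_{D_z}(\A)=\overline{fg^2}=\overline{N/x^2}=\overline{\alpha^2y/x}\equiv\overline{y/x}$; the relation $x^3y=w^4$ gives $y/x=(w^2/x^2)^2$, a square, so this residue is trivial. At $D_f$ one has $v(f)=1$, $v(g)=0$, so $\partial_{D_f}(\A)=\overline{g^{-1}}=\overline{-x/z}$; using $z^2=-\alpha^2xy$ this equals $\overline{z/(\alpha^2y)}\equiv\overline{z/y}$, and the identity $(w^2+\alpha^{-1}xz)^2=y^3z$ shows that $y^3z$, hence $z/y=y^3z/(y^2)^2$, is a square. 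All three residues vanish, so purity yields $\A\in\br(X_\alpha)[2]$.

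The only genuine obstacle is the residue at $D_f$: its naive value $z/y$ is visibly not a square, and it becomes one only after feeding the surface equation back in through the completion-of-the-square identity $(w^2+\alpha^{-1}xz)^2=y^3z$ — indeed the cross term $-2\alpha^{-1}xzw^2$ in \eqref{eq: example X_a} appears to be engineered precisely so that this holds. The residues at $D_x$ and $D_z$ are routine by comparison. One small point to record is that on each divisorial component of $D_f$ the coordinate $y$ is not identically zero (otherwise the identity is vacuous): if $y\equiv 0$ on $D_f$ then $z^2=-\alpha^2xy=0$ forces $z\equiv0$ as well, which cuts out only a codimension-two locus and hence no divisor.
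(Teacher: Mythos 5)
Your proof is correct and follows essentially the same route as the paper: identify the three divisors $\{x=0\}$, $\{z=0\}$, $\{N=0\}$ as the only possible ramification locus, compute the tame symbols there, and kill the residue along $\{N=0\}$ via the same completing-the-square identity $(w^2+\alpha^{-1}xz)^2=y^3z$ that the paper derives on its divisor $D_3$. The only cosmetic difference is that the paper certifies irreducibility of the three loci by a MAGMA computation, whereas you sidestep this by noting that your relations hold on every divisorial component and that degenerate components (e.g.\ $y\equiv 0$ on $\{N=0\}$) would have codimension at least two.
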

\begin{proof}
    Let $f:=z^2+\alpha^2 xy$ and $C_x$,$C_z$,$C_{f}$ be the closed subsets of $V_\alpha$ defined by the equations $x=0$, $z=0$ and $f=0$ respectively. The quaternion algebra $\A$ defines an element in $\br(U)$, where $U:=V_\alpha\setminus (C_x\cup C_z \cup C_{f})$.
The purity theorem for the Brauer group \cite[ Theorem 3.7.2]{BGgroupTheleneSkoro}, assures the existence of the exact sequence 
\begin{equation}\label{purity}
    0 \rightarrow \br(V_\alpha)[2] \rightarrow \br(U)[2] \xrightarrow{\oplus \partial_{D}} \bigoplus_{D} \h^1(k(D),\Z/2)
\end{equation}
where $D$ ranges over the irreducible divisors of $V_\alpha$ with support in $X\setminus U$ and $k(D)$ denotes the residue field at the generic point of $D$. 

In order to use the exact sequence (\ref{purity}) we need to understand what the prime divisors of $V_\alpha$ with support in $V_\alpha\setminus U=C_x \cup C_z \cup C_{f}$ look like. Using MAGMA \cite{magma} it is possible to check the following:
\begin{itemize}
    \item $C_x$ has one irreducible component $D_1$ defined by the set of equations $\{x=0,y^3z+z^3w+w^4=0\}$;
    \item $C_z$ has one irreducible component $D_2$, defined by the set of equations $\{z=0,x^3y-w^4=0\}$;
    \item $C_{f}$ has one irreducible component $D_3$, defined by the set of equations $\{\alpha^2xy + z^2=0, x^3z^2 + y^2z^3 + 2\alpha x^2zw^2 + \alpha^2 xw^4=0,
        \alpha^2 y^3z -x^2z^2 - 2\alpha xzw^2 -\alpha^2 w^4=0
        \}$.
\end{itemize}
Therefore, we can rewrite (\ref{purity}) in the following way:
\begin{equation}\label{purity2}
    0 \rightarrow \br(V_\alpha)[2] \rightarrow \br(U)[2] \xrightarrow{\oplus \partial_{D_i}} \bigoplus_{i=1}^3 H^1(k(D_i),\Z/2).
\end{equation}
Moreover, we have an explicit description of the residue map on quaternion algebras: for an element $(a,b)\in \br(U)[2]$ we have 
\begin{equation}\label{eqResidueMap}
    \partial_{D_i}(a,b)=\left[(-1)^{\nu_i(a)\nu_i(b)}\frac{a^{\nu_i(b)}}{b^{\nu_i(a)}}\right]\in \frac{k(D_i)^\times}{k(D_i)^{\times2}}\simeq H^1(k(D_i),\Z/2) 
\end{equation}
where $\nu_i$ is the valuation associated to the prime divisor $D_i$. This follows from the definition of the tame symbols in Milnor $K$-theory together with the compatibility of the residue map with the tame symbols given by the Galois symbols (see \cite{GilleSzamuely}, Proposition $7.5.1$).

We can proceed with the computation of the residue maps $\partial_{D_i}$ for $i=1,\dots,3$:

\begin{enumerate}
    \item $\nu_1(x)=1$ and $\nu_1(f)=\nu_1(z)=0$. Hence,
    \[
         \partial_{D_1}\left(\frac{f}{z^2},-\frac{z}{x}\right)=\left[\left(\frac{f}{z^2}\right)^{-1}\right]=1\in \frac{k(D_1)^\times}{k(D_1)^{\times 2}}
    \]
    where the last equality follows from the fact that $x=0$ on $D_1$, thus $f\mid_{D_1}=z^2$.
    \item $\nu_2(z)=1$ and $\nu_2(f)=\nu_2(x)=0.$ Hence,
    \[
        \partial_{D_2}\left(\frac{f}{z^2},-\frac{z}{x}\right)=\left[\left(\frac{f}{z^2}\right)\left(-\frac{z}{x}\right)^{2}\right]=\left[\frac{f}{x^2}\right]=1\in \frac{k(D_2)^\times}{k(D_2)^{\times 2}}
    \]
    where the last equality follows from the fact that $z=0$ and $x^3y=w^4$ on $D_2$, thus $f\mid_{D_2}=\alpha^2 xy$ and $\frac{\alpha^2 y}{x}=\alpha^2 \left(\frac{w}{x}\right)^4=\left(\alpha \frac{w^2}{x^2}\right)^2$.
    \item $\nu_3(f)=1$ and $\nu_3(x)=\nu_3(z)=0$. Hence, 
    \[
        \partial_{D_3}\left(\frac{f}{z^2},-\frac{z}{x}\right)=\left[-\frac{z}{x}\right]=\left[ \left(\frac{\alpha^3 x}{z^3} \left(w^2+\frac{xz}{\alpha}\right)\right)^2\right]=1\in \frac{k(D_3)^\times}{k(D_3)^{\times 2}}
    \]
    where the last equality follows from the following equalities on $D_3$:
    \begin{itemize}
        \item $y^3z=w^4+\frac{2}{\alpha} xz w^2+\frac{x^2z^2}{\alpha^2}=\left(w^2+\frac{xz}{\alpha}\right)^2$;
        \item $xy=-\frac{1}{\alpha^2}z^2$ implies that $y^3z=(xy)^3 \frac{z}{x}\frac{1}{x^2}=-\frac{z}{x}\left(\frac{z^2}{\alpha^2}\right)^3 \frac{1}{x^2}=\frac{-z}{x}\left(\frac{z^3}{\alpha^3 x}\right)^2$
    \end{itemize}
\end{enumerate}
Therefore, $\partial_{D_i}(\A)=0$ for all $i\in \{1,2,3\}$, hence $\A\in \br(V_\alpha)$.
    \end{proof}
    Let $\ip$ be a prime above $2$ and $\Os_{\ip}$ be the valuation ring of $k_{\ip}$. We have that $2\cdot \alpha^{-1}\in \Os_{\ip}$ if and only if $\alpha^2 \not \equiv 0 \mod 8$; we can define $\mathcal{X}_\alpha$ to be the $\Os_{\ip}$-scheme defined by equation \eqref{eq: example V_a}. If $\alpha^2\not\equiv 0 \mod 8$, then $\mathcal{X}_\alpha$ is smooth and hence $V_\alpha$ has good reduction at $\ip$; we denote by $X_\alpha$ the base change of $V_\alpha$ to $k_{\ip}$. 
    \begin{thm}\label{thm: family of examples}
        Assume that $\alpha^2\not\equiv 0 \mod 8$. Then, $\mathcal{X}_\alpha$ has good ordinary reduction if and only if $\alpha^2\equiv 1 \mod 2$. The evaluation map attached to $\A$ 
        \[
            \ev_\A\colon X_\alpha (k_{\ip})\rightarrow \Q/\Z
        \]
        is non-constant if and only if 
         \[
            \alpha^2 \not \equiv 0\mod 4.
        \]
    \end{thm}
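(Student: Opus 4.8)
The plan is to work locally at $\ip$, writing $L = k_\ip$, $p = 2$, $e = e_\ip$ and $e' = 2e$, and to split according to $v_2(\alpha^2) \in \{0,1,2\}$; this exhausts all cases because $\alpha^2 \not\equiv 0 \bmod 8$ is equivalent to $v_2(\alpha^2) \le 2$, which (as the paper already notes) is exactly the condition making $2\alpha^{-1}$ integral so that $\mathcal{X}_\alpha$ is defined. First I would reduce equation \eqref{eq: example X_a} modulo $\ip$: in every case $\overline{2\alpha^{-1}}$ and $\overline{\alpha^2}$ lie in $\F_2$, so the special fibre $Y$ is a quartic over $\F_2$ of the shape $x^3y + y^3z + z^3w + w^4 + \overline{\alpha^2}\,xyzw + \overline{2\alpha^{-1}}\,xzw^2 = 0$, and $\overline{\alpha^2} = 1$ precisely when $v_2(\alpha^2) = 0$ (i.e.\ $\alpha^2$ odd), and $\overline{\alpha^2} = 0$ otherwise.

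For the first assertion I would invoke Lemma \ref{lemma: criterium ordinary k3}: since each reduced equation has coefficients in $\F_2$, the surface $Y$ is defined over $\F_2$ and ordinarity (being geometric) is detected by $|Y(\F_2)| \bmod 2$. A direct point count shows that when $\alpha^2$ is odd the extra monomial $xyzw$ is present and $|Y(\F_2)|$ is even, so $Y$ is ordinary, while when $\alpha^2$ is even that term disappears and $|Y(\F_2)|$ is odd, so $Y$ is non-ordinary. (As a cross-check one can instead evaluate the Cartier operator on the explicit generator of $\h^0(Y,\Omega^2_Y)$ from Lemma \ref{lemma: explicit global 2-form} and apply the dichotomy of Lemma \ref{lemma: on the 2-forms on Y}.) This gives ordinary reduction exactly when $\alpha^2 \equiv 1 \bmod 2$.

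For the second assertion I would rewrite $\A = \bigl(1 + \alpha^2\tfrac{xy}{z^2}, -\tfrac z x\bigr)$ and note $v_\ip(\alpha^2) = e\, v_2(\alpha^2)$, so by Proposition \ref{prop: explicit description of fil_{e'-m}} the class $\A$ lies in $U^{n}h^2(K^h)$ with $n = e\, v_2(\alpha^2)$, i.e.\ in $\fil_{e'-n}\br(X)[2]$. When $v_2(\alpha^2) = 2$ we get $n = 2e = e'$, hence $\A \in U^{e'} \cong \fil_0\br(X)[2] = \Ev_{-1}\br(X)[2]$, so $\ev_\A$ is constant on $X(L)$ (using $\fil_0 = \Ev_0 = \Ev_{-1}$ for K$3$ surfaces together with Ieronymou's result quoted after Remark \ref{rmk: K3 with good reduction global 2 form }); since under the standing hypothesis $v_2(\alpha^2) = 2$ is equivalent to $\alpha^2 \equiv 0 \bmod 4$, this is the ``only if'' direction. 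In the remaining cases $n < e'$, and I would compute the image of $\A$ in $\gr^{n}$ via the maps $\rho_n$ of Proposition \ref{propGradedPieces}: for $v_2(\alpha^2) = 0$ this lands in $\gr^0 \cong \Omega^2_{F,\log}\oplus\Omega^1_{F,\log}$ as the logarithmic form $\tfrac{d\bar y_1}{\bar y_1}\wedge\tfrac{d\bar y_2}{\bar y_2}$ with $y_1 = (z^2+\alpha^2 xy)/z^2$ and $y_2 = z/x$, whereas for $v_2(\alpha^2) = 1$ (so $e = 2$, $n = 2$) it lands in $\gr^{2} \cong \Omega^1_F/Z^1_F \oplus \Omega^0_F/Z^0_F$ as the class of $\bar w\,\tfrac{d\bar y_2}{\bar y_2}$; by Lemma \ref{lemma: rsw and map rho_m} these compute $\rsw_{n,\pi}(\A)$ up to a nonzero scalar.

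The crux, and the step I expect to be the main obstacle, is to show these forms are genuinely nonzero: then $\A$ has level exactly $n < e'$, hence $\A \notin U^{e'} = \fil_0$ and $\ev_\A$ is non-constant. In the ordinary case this means checking that $\tfrac{d\bar y_1}{\bar y_1}\wedge\tfrac{d\bar y_2}{\bar y_2}$ is a nonzero multiple of the canonical generator $\omega$ of the one-dimensional space $\h^0(Y,\Omega^2_Y)$ of Lemma \ref{lemma: explicit global 2-form}; in the non-ordinary case it means checking that $\bar w\,\tfrac{d\bar y_2}{\bar y_2}$ is not closed in $\Omega^1_F$. Both reduce to explicit computations in the function field of the reduced quartic, using its defining relation to eliminate variables, and are naturally carried out with the aid of MAGMA as elsewhere in the paper. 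Collecting the three cases yields non-constant evaluation precisely when $\alpha^2 \not\equiv 0 \bmod 4$, which completes the proof.
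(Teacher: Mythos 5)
Your proposal is correct and follows essentially the same route as the paper: ordinarity via the point count of Lemma~\ref{lemma: criterium ordinary k3}, placement of $\A$ in the filtration via Proposition~\ref{prop: explicit description of fil_{e'-m}}, identification of its class in the graded pieces via Proposition~\ref{propGradedPieces} and Lemma~\ref{lemma: rsw and map rho_m}, and the K$3$ fact that $\A\in\fil_0\br(X_\alpha)$ if and only if $\ev_\A$ is constant on $X_\alpha(k_\ip)$. Two points of comparison are worth recording. First, your treatment of the case $\alpha^2\equiv 0 \bmod 4$ is more uniform than the paper's: since $v_\ip(\alpha^2)=e\,v_2(\alpha^2)=2e=e'$, the symbol $\{1+\alpha^2 xy/z^2,\,-z/x\}$ lies in $U^{e'}h^2(K^h)\simeq\fil_0\br(K^h)[2]$ regardless of whether $\Q_2(\alpha)/\Q_2$ is ramified, whereas the paper splits this case and invokes Theorem~\ref{thm: good non-ordinary reduction} for the unramified subcase and the valuation argument only for the ramified one; your version buys a small simplification. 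Second, what you call the crux — nonvanishing of the forms in $\gr^0$ and $\gr^2$ — is exactly where the paper's proof does its real work, and it does so by hand rather than by machine: using Lemma~\ref{lemma: explicit global 2-form} it writes the generator $\omega$ of $\h^0(Y_\alpha,\Omega^2_{Y_\alpha})$ locally and shows, in the ordinary case, $\omega=\frac{df}{f}\wedge\frac{dg}{g}$ with $f=(z^2+xy)/x^2$, $g=z/x$, and, in the case $\alpha^2\equiv 2\bmod 4$, $\omega=d\bigl(f\,\frac{dg}{g}\bigr)$ with $f=xy/z^2$ (using the characteristic-$2$ identity $(x/z)^2\,d(y/x)=d(xy/z^2)$); the second identity is precisely the verification that your class $f\,\frac{dg}{g}$ is not closed, since its differential is $\omega\neq 0$. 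So your plan closes, but these short explicit identities, not a MAGMA search, are the intended mechanism. One cosmetic slip: when $\A\in U^n$ with image computed in $\gr^n$, the refined Swan conductor it determines is $\rsw_{e'-n,\pi}(\A)$, not $\rsw_{n,\pi}(\A)$ (the two indices happen to agree in your case $n=2$, $e'=4$, but not in the case $n=0$); this does not affect the argument, since nonvanishing in $\gr^n$ already gives $\A\notin U^{n+1}\supseteq U^{e'}\simeq\fil_0\br(K^h)[2]$.
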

    \begin{proof}
       Recall that we know that for K$3$ surfaces the evaluation map attached to $\A$ is non-constant if and only if $\A \notin \fil_0\br(X_\alpha)$ (cf. Section~\ref{section: the case of K3 surfaces}). 
        \begin{itemize}
            \item If $\alpha^2\equiv 1\mod 2$, then the special fibre $Y_\alpha$ is defined by the equation 
        \[
        x^3y + y^3 z+z^3 w+w^4+xyzw=0.
        \]
        From Lemma~\ref{lemma: criterium ordinary k3} we get that $Y_\alpha$ is an ordinary K$3$ surface. From Lemma~\ref{lemma: explicit global 2-form} we know that a generator (as $k(\ip)$-vector space) of $\h^0(Y_\alpha,\Omega^2_{Y_\alpha})$ is given by the global $2$-form $\omega$ that can be written (locally) as 
        \begin{align*}
            &\frac{d\left(\frac{y}{x}\right)\wedge d\left(\frac{z}{x}\right)}{\frac{z^3+xyz}{x^3}}=\frac{x^2}{z^2+xy}d\left(\frac{y}{x}\right)\wedge d\left(\frac{z}{x}\right)\frac{x}{z}=\\
            &\left(\frac{x^2}{z^2+xy}\right)\cdot d\left(\frac{z^2+xy}{x^2}\right)\wedge \left(\frac{x}{z}\right)\cdot d\left(\frac{z}{x}\right)
        \end{align*}
        where the last equality follows from 
        $d\left(\frac{z^2+xy}{x^2}\right)\wedge d\left(\frac{z}{x}\right)=d\left(\frac{y}{x}\right)\wedge d\left(\frac{z}{x}\right)$. Hence, we can write $\omega$ as $\frac{df}{f}\wedge \frac{dg}{g}$,
        with $f=\frac{z^2+xy}{x^2}$, $g=\frac{z}{x}$. Finally, we see that, by Proposition~\ref{propGradedPieces}
        \[
        [\A]=\left[\left\{\frac{z^2+\alpha^2 \cdot  xy}{z^2},-\frac{z}{x}\right\}\right]=\left[\left\{\frac{z^2+\alpha^2 \cdot  xy}{x^2},-\frac{z}{x}\right\}\right]=\rho_0(\omega,0)
        \]
        since $\frac{z^2+\alpha^2 \cdot  xy}{x^2}$ and $-\frac{z}{x}$ are lifts to characteristic $0$ of $f$ and $g$, respectively. Hence, using Proposition~\ref{prop: rsw and map rho_m} we get that $\rsw_{e',\pi}(\A)\ne (0,0)$ and $\A\notin \fil_{e'-1}\br(X_\alpha)\supseteq \fil_0\br(X_\alpha)$.
       \item If $\alpha^2\equiv 2\mod 4$, then the special fibre $Y_\alpha$ is defined by the equation 
        \begin{equation*}
            x^3y + y^3 z+z^3 w+w^4=0
        \end{equation*}
        From Lemma~\ref{lemma: criterium ordinary k3} we get that $Y_\alpha$ is a non-ordinary K$3$ surface over $k(\ip)$. From Lemma~\ref{lemma: explicit global 2-form} we know that $\h^0(Y_\alpha,\Omega^2_{Y_\alpha})$ is generated (as a $k(\ip)$-vector space) by the $2$-form $\omega$ that can be written (locally) as
        \[
            \frac{d\left(\frac{y}{x}\right)\wedge d\left(\frac{z}{x}\right)}{\frac{z^3}{x^3}}=\left(\frac{x^2}{z^2}\right)\cdot d\left(\frac{y}{x}\right)\wedge \left(\frac{x}{z}\right)\cdot  d\left(\frac{z}{x}\right)= d\left(\frac{xy}{z^2}\right)\wedge \left(\frac{x}{z}\right)\cdot d\left(\frac{z}{x}\right)
        \]
        where the last equality follows from the fact that, since we are working over a field of characteristic $2$, $\left(\frac{x}{z}\right)^2 d\left(\frac{y}{x}\right)=d\left(\frac{xy}{z^2}\right)$. Hence, in this case we can write $\omega$ as $d\left(f \cdot \frac{dg}{g}\right)$,
        with $f=\frac{xy}{z^2}$, $g=\frac{z}{x}$. Since $\alpha^2\equiv 2\mod 4$, the prime ideal $(2)$ is ramified in the field extension $\Q_2(\alpha)/\Q_2$ and $\pi=\alpha$ is a uniformiser. From Proposition~\ref{propGradedPieces} we get that 
        \[
            \left[\left\{\frac{z^2+\alpha^2 \cdot  xy}{z^2},-\frac{z}{x}\right\}\right]=\left[\left\{1+\alpha^2 \frac{xy}{z^2},-\frac{z}{x}\right\}\right]=\rho_2\left(f \cdot \frac{dg}{g}\right) 
        \]
        since $\frac{xy}{z^2}$ and $-\frac{z}{x}$ are two lifts to characteristic $0$ of $f$ and $g$, respectively.
        Hence, by Proposition~\ref{prop: rsw and map rho_m} $\rsw_{2,\pi}(\A)\ne (0,0)$. Therefore $\A\notin \fil_1\br(X_\alpha)$, and thus $\A\notin \fil_0\br(X_\alpha)\subseteq \fil_1\br(X_\alpha)$. 
        \item If $\alpha^2\equiv 0 \mod 4$, then the special fibre $Y_\alpha$ is defined by the equation 
        \[
            x^3 y+y^3z +z^3 w+w^4+xzw^2=0.
        \]
        Again, from Lemma~\ref{lemma: criterium ordinary k3} we get that $Y_\alpha$ is a non-ordinary K$3$ surface over $k(\ip)$. If $\Q_2(\alpha)/\Q_2$ is unramified then we are done by Theorem~\ref{intro_thm: good non-ordinary reduction}. If the field extension is ramified and $\pi$ is a uniformiser, then since $\alpha^2\equiv 0 \mod 4$ and $\alpha^2\not\equiv 0 \mod 8$, we have that $\alpha^2=\pi^4 \beta$ with $\beta\in \Os_{\ip}^\times$. Hence, if we look at the corresponding element in $k^2(K^h)$ via the isomorphism of equation~\eqref{eqIsomoprhismRootUnity}, we have that 
         \[
            \A\mapsto \left\{\frac{z^2+\alpha^2 \cdot  xy}{z^2},-\frac{z}{x}\right\}=\left\{1+\pi^4\cdot \frac{\beta\cdot xy}{z^2},-\frac{z}{x}\right\} \in U^{4} k^2(K^h).
        \]
         Hence, using again Proposition~\ref{prop: rsw and map rho_m}, $\A\in \fil_0\br(X_\alpha)[2]$.
         \end{itemize}
    \end{proof}
   \begin{rmk}
       The case $\alpha^2\equiv 2\mod 4$ proves that the bound appearing in Theorem~\ref{intro_thm: good non-ordinary reduction} is optimal. In fact, we are able to find examples of K$3$ surfaces over a quadratic field extension of $\Q$ such that there is a prime above $2$ whose ramification index is $e(\ip/2)=2$ and that plays a role in the Brauer--Manin obstruction to weak approximation.
       
       Finally, note that when $\alpha^2\equiv 0 \mod 4$ and $e(\ip/2)=1$, we know from Theorem~\ref{intro_thm: good non-ordinary reduction} that there is an equality $\Ev_{-1}\br(X_\alpha)=\br(X_\alpha)$. However, if $e(\ip/2)=2$, we just showed that the element $\A$ of the previous theorem lies in $\Ev_{-1}\br(X_\alpha)[2]$ and not that $\br(X_\alpha)=\Ev_{-1}\br(X_\alpha)$.
   \end{rmk}
   \newpage
   \appendix
\newpage
\bibliographystyle{plain}
\bibliography{bib.bib}
\textsc{Department of Mathematics, South Kensington Campus, Imperial College London, SW7 2AZ United Kingdom}\\
\textit{Email Address:} \textbf{m.pagano@imperial.ac.uk}
\end{document}